\documentclass[oneside,english]{amsart}
\usepackage[T1]{fontenc}
\usepackage[latin9]{inputenc}
\setcounter{tocdepth}{1}
\usepackage{verbatim}
\usepackage{prettyref}
\usepackage{amstext}
\usepackage{amsthm}
\usepackage{amssymb}
\usepackage{graphicx}
\usepackage{esint}
\usepackage[normalem]{ulem}

\usepackage{color}

\makeatletter
\numberwithin{equation}{section}
\numberwithin{figure}{section}
  \theoremstyle{plain}
  \newtheorem{thm}{\protect\theoremname}[section]
  \theoremstyle{remark}
  \newtheorem{rem}{\protect\remarkname}[section]
  \theoremstyle{plain}
  \newtheorem{prop}{\protect\propositionname}[section]
 \ifx\proof\undefined\
   \newenvironment{proof}[1][\proofname]{\par
     \normalfont\topsep6\p@\@plus6\p@\relax
     \trivlist
     \itemindent\parindent
     \item[\hskip\labelsep
           \scshape
       #1]\ignorespaces
   }{%
     \endtrivlist\@endpefalse
   }
   \providecommand{\proofname}{Proof}
 \fi
  \theoremstyle{plain}
  \newtheorem{cor}{\protect\corollaryname}[section]
  \theoremstyle{plain}
  \newtheorem{lem}{\protect\lemmaname}[section]
  \theoremstyle{plain}
  
  \theoremstyle{definition}
  
  \theoremstyle{definition}

\usepackage{mathrsfs,amsthm,amssymb,bbm,fullpage}
\usepackage{hyperref}


\newcommand{\R}{\mathbb{R}}

\newcommand{\indicator}[1]{\mathbbm{1}_{#1}}


\newrefformat{prob}{Problem \ref{#1}}
\newrefformat{prop}{Proposition \ref{#1}}
\newrefformat{cor}{Corollary \ref{#1}}
\newrefformat{conj}{Conjecture \ref{#1}}
\newrefformat{rem}{Remark \ref{#1}}
\newrefformat{fig}{Figure \ref{#1}}
\newrefformat{subsec}{Section \ref{#1}}

\@ifundefined{showcaptionsetup}{}{%
 \PassOptionsToPackage{caption=false}{subfig}}
\usepackage{subfig}
\makeatother

\usepackage{babel}
\providecommand{\conjecturename}{Conjecture}
\providecommand{\corollaryname}{Corollary}
\providecommand{\definitionname}{Definition}
\providecommand{\lemmaname}{Lemma}
\providecommand{\problemname}{Problem}
\providecommand{\propositionname}{Proposition}
\providecommand{\remarkname}{Remark}
\providecommand{\theoremname}{Theorem}

\begin{document}

\title{On the optimal design of wall-to-wall heat transport}

\author{Charles R.\ Doering and Ian Tobasco}

\date{\today}
\begin{abstract}
We consider the problem of optimizing heat transport 
through an incompressible fluid layer. Modeling passive scalar transport
by advection-diffusion, we maximize the mean rate of
total transport by a divergence-free velocity field. Subject
to various boundary conditions and intensity constraints, we prove
that the maximal rate of transport scales linearly in the r.m.s.\
kinetic energy and, up to possible logarithmic corrections, as the
$1/3$rd power of the mean enstrophy in the advective regime. This
makes rigorous a previous prediction on the near optimality of convection
rolls for energy-constrained transport. Optimal designs for enstrophy-constrained
transport are significantly more difficult to describe:
we introduce a ``branching'' flow design with an unbounded number
of degrees of freedom and prove it achieves nearly optimal transport.
The main technical tool behind these results is a variational principle
for evaluating the transport of candidate designs. The principle
admits dual formulations for bounding transport from above and below.
While the upper bound is closely related to the ``background method'',
the lower bound reveals a connection between the optimal design problems
considered herein and other apparently related model problems from
mathematical materials science. These connections serve to motivate
 designs.

\end{abstract}

\maketitle
\tableofcontents{}

\section{Introduction}

\subsection{The wall-to-wall optimal transport problem}

This paper concerns a class of optimal design problems from fluid
dynamics that asks to maximize the overall transport of heat through
an incompressible fluid layer. Passive scalar transport by an incompressible
fluid is governed by the advection-diffusion equation
\begin{equation}
\partial_{t}T+\mathbf{u}\cdot\nabla T=\kappa\Delta T\label{eq:advection-diffusion-kappa}
\end{equation}
where $T(\mathbf{x},t)$ is the scalar field undergoing transport,
referred to as temperature throughout, $\mathbf{u}(\mathbf{x},t)$
is the velocity vector field of the fluid, and $\kappa$ is the coefficient
of molecular diffusivity. In general, the velocity field $\mathbf{u}$
and temperature $T$ may depend on both space $\mathbf{x}=(x,y,z)$
and time $t$. Due to incompressibility, $\mathbf{u}$ must remain
divergence-free. Thinking of $\mathbf{u}$ as being in our control,
we set ourselves the task of choosing it to maximize the overall transport
of heat determined by \prettyref{eq:advection-diffusion-kappa}.

This is a rich class of optimal design problems and we are interested
in the dependence of any solutions, i.e., \emph{optimal designs},
on various constraints that may be imposed. We discuss specific constraints
for the velocities later on, but let us handle the temperature field
first. Supposing the fluid is contained between two impenetrable parallel
planar walls at a distance $h$, we fix the temperature at the walls
by imposing the constant Dirichlet boundary conditions
\begin{equation}
T|_{z=0}=T_{\text{hot}}\quad\text{and}\quad T|_{z=h}=T_{\text{cold}}.\label{eq:Dirichlet-dimesional}
\end{equation}
If the velocity field $\mathbf{u}$ is regular enough \textemdash eventually
our constraints on it will ensure this \textemdash the advection-diffusion
equation \prettyref{eq:advection-diffusion-kappa} admits a unique
solution $T$ satisfying \prettyref{eq:Dirichlet-dimesional} for
every essentially bounded initial temperature field $T|_{t=0}=T_{0}(\mathbf{x})$.
We see, therefore, that the overall heat transport specified by \prettyref{eq:advection-diffusion-kappa}
should depend in general on $\mathbf{u}$ and $T_{0}$. However, as
the partial differential equation (PDE) \prettyref{eq:advection-diffusion-kappa}
is dissipative, any dependence on the initial temperature $T_{0}$
is eventually lost as $t\to\infty$, and the resulting heat transport
can be thought of as being set by $\mathbf{u}$ alone.

In this paper, we study the optimal design of wall-to-wall heat transport
in the long-time limit, subject to various boundary conditions and
intensity constraints on the velocity field $\mathbf{u}$. To simplify
matters, we consider all fields to be periodic in the wall-parallel
variables $x$ and $y$ with periods $l_{x}$ and $l_{y}$. That is,
we take $\mathbf{x}$ to belong to the domain $\Omega=\mathbb{T}_{xy}^{2}\times[0,h]_{z}$,
identifying $\mathbb{T}_{xy}^{2}$ with $[0,l_{x}]\times[0,l_{y}]$
in the usual way. We turn now to discuss the precise measure of overall
heat transport that will be optimized throughout. 

\subsubsection{Finite-time wall-to-wall optimal transport}

According to the advection-diffusion equation \prettyref{eq:advection-diffusion-kappa}
and the boundary conditions \prettyref{eq:Dirichlet-dimesional},
the vertically averaged rate of heat transport per unit area up to
time $t=\tau$ is given by
\[
J_{\tau}=\frac{1}{\tau}\frac{1}{l_{x}l_{y}h}\int_{0}^{\tau}\int_{\Omega}\hat{k}\cdot(\mathbf{u}T-\kappa\nabla T)\,d\mathbf{x}dt=\frac{\kappa}{h}(T_{hot}-T_{cold})+\fint_{0}^{\tau}\fint_{\Omega}wT\,d\mathbf{x}dt.
\]
Here, $\mathbf{u}=u\hat{i}+v\hat{j}+w\hat{k}$ and $\fint$ denotes
an average over the integration domain. We are interested in determining
those velocity fields which maximize the overall heat transport $J_{\tau}$.
Of course, unless $\mathbf{u}$ is suitably constrained the optimal
transport $\sup\,J_{\tau}$ will be infinitely large. It is natural to prescribe the overall
magnitude of $\mathbf{u}$, and to enforce suitable boundary
conditions at the walls $\partial\Omega$. The resulting optimal
design problems take the form
\begin{equation}
\sup_{\substack{\mathbf{u}(\mathbf{x},t)\\
||\mathbf{u}||=U\\
+b.c.
}
}\,J_{\tau}\label{eq:finite-timepblm}
\end{equation}
where the parameter $U$ sets the advective intensity of the admissible
velocity fields.

We consider two classes of admissible velocity fields which we refer to as being ``energy-''
or ``enstrophy-constrained''. In the energy-constrained class, we take
\[
||\mathbf{u}||^{2}=\fint_{0}^{\tau}\fint_{\Omega}|\mathbf{u}|^{2}\,d\mathbf{x}dt
\]
in \prettyref{eq:finite-timepblm} so that the constraint
$||\mathbf{u}||=U$ sets the average kinetic energy available for
advection. As for boundary conditions, the no-penetration ones
\[
w|_{\partial\Omega}=0
\]
are well-suited to this class. We call the problem that results the 
finite-time energy-constrained wall-to-wall optimal transport
problem. The finite-time enstrophy-constrained\footnote{For various boundary conditions including the ones considered here,
the mean square rate of strain $||\nabla\mathbf{u}||_{L^{2}(\Omega)}^{2}$
and enstrophy $||\nabla\times\mathbf{u}||_{L^{2}(\Omega)}^{2}$ are
the same.} problem arises from taking
\[
||\mathbf{u}||^{2}=h^{2}\fint_{0}^{\tau}\fint_{\Omega}|\nabla\mathbf{u}|^{2}\,d\mathbf{x}dt
\]
and enforcing the no-slip boundary conditions
\[
\mathbf{u}|_{\partial\Omega}=\mathbf{0}
\]
in \prettyref{eq:finite-timepblm}. The essential results of this
paper hold as well for the stress-free boundary conditions
\[
w|_{\partial\Omega}=0\quad\text{and}\quad\partial_{z}u|_{\partial\Omega}=\partial_{z}v|_{\partial\Omega}=0,
\]
although our focus is mostly on the no-slip ones. 


\subsubsection{Infinite-time wall-to-wall optimal transport}

Having introduced the finite-time energy- and enstrophy-constrained
wall-to-wall optimal transport problems, we turn to discuss their
infinite-time analogs which are the focus of this paper. Let $\left\langle \cdot\right\rangle $ denote
the (limit superior) space and long-time average
\[
\left\langle f\right\rangle =\limsup_{\tau\to\infty}\,\fint_{0}^{\tau}\fint_{\Omega}f(\mathbf{x},t)\,d\mathbf{x}dt.
\]
As an integration by parts shows, the space and long-time averaged
heat transport determined by \prettyref{eq:advection-diffusion-kappa}
satisfies
\[
\limsup_{\tau\to\infty}\,J_{\tau}=\kappa\left\langle |\nabla T|^{2}\right\rangle .
\]
Note this depends on $\mathbf{u}$ but not on the initial temperature
$T_{0}$ so long as it is bounded. 
In direct analogy with the finite-time optimal transport
problems, we define the infinite-time energy-constrained wall-to-wall
optimal transport problem by
\begin{equation}
\sup_{\substack{\mathbf{u}(\mathbf{x},t)\\
\left\langle |\mathbf{u}|^{2}\right\rangle =U^{2}\\
w|_{\partial\Omega}=0
}
}\,\kappa\left\langle |\nabla T|^{2}\right\rangle \label{eq:energy-constrained-in-units}
\end{equation}
and the infinite-time enstrophy-constrained problem by
\begin{equation}
\sup_{\substack{\mathbf{u}(\mathbf{x},t)\\
\left\langle |\nabla\mathbf{u}|^{2}\right\rangle =\frac{U^{2}}{h^{2}}\\
\mathbf{u}|_{\partial\Omega}=\mathbf{0}
}
}\,\kappa\left\langle |\nabla T|^{2}\right\rangle .\label{eq:enstrophy-constrained-in-units}
\end{equation}
It is these infinite-time optimal design problems that we study in
the remainder of this paper. As we never return to the finite-time
problems, we discontinue the use of the distinguishing phrases from
now on.

A word is in order regarding the sense in which we consider \prettyref{eq:energy-constrained-in-units}
and \prettyref{eq:enstrophy-constrained-in-units} to be solved. We
do not claim that there must exist maximizers for either problem. 
Although this certainly merits investigation, and is related
to questions of $\Gamma$-convergence \cite{braides2002gamma} of
the finite-time problems to the infinite-time ones, we choose in this
paper to focus instead on the maximum \emph{value} of transport which
is always well-defined. To the maximum value is associated maximizing
sequences, i.e., near optimizers which we may seek to describe. Even
in the steady versions of \prettyref{eq:energy-constrained-in-units}
and \prettyref{eq:enstrophy-constrained-in-units} \textemdash where
all fields are assumed to be independent of time and optimal designs
are guaranteed to exist \textemdash determining the maximal transport
is a non-trivial task. 

The energy- and enstrophy-constrained wall-to-wall optimal transport
problems \prettyref{eq:energy-constrained-in-units} and \prettyref{eq:enstrophy-constrained-in-units}
were introduced in \cite{hassanzadeh2014wall} and studied further
in \cite{SD} by a combination of asymptotic and numerical methods.
Similar methods have since been applied to study other related optimal
transport problems \cite{alben2017improved,marcotte2018optimal,motoki2018optimal}.
A key question left unresolved by these works is whether the local
maximizers constructed therein actually achieve heat transport comparable
to that of global optimizers. In this paper, we present a new mathematically
rigorous approach to answering this question. Our methods do not rely
on the use of Euler-Lagrange equations; as these are non-concave maximization
problems with many local maximizers, critical point conditions do
not suffice to identify global optimizers. Rather, our starting point
is a new variational formula for evaluating wall-to-wall heat transport,
which is useful both for proving \emph{a priori} upper bounds on optimal
transport as well as lower bounds on the transport of candidate designs.
For the energy-constrained problem, we prove that the convection roll
designs from \cite{hassanzadeh2014wall} achieve globally optimal
heat transport up to a universal prefactor in the advection-dominated
regime. For the enstrophy-constrained problem, we construct a new
class of ``branching'' designs featuring a large and potentially
unbounded number of degrees of freedom. A well-chosen branching design
achieves optimal transport up to possible logarithmic corrections. 

The wall-to-wall optimal transport problem is naturally related to
the study of transport in turbulent fluids. One consequence of our
results is a proof that any flows arising in Rayleigh's original two-dimensional
model of buoyancy-driven convection between stress-free walls
\cite{Rayleigh1916} must achieve significantly sub-optimal rates
of heat transport in the large Rayleigh number regime $Ra\gg1$. Indeed,
while our results imply the existence of incompressible flows achieving transport 
consistent with the proposed ``ultimate scaling'' law $Nu\sim Ra^{1/2}$ (up to logarithmic corrections), 
such transport is impossible in Rayleigh's original model \cite{whitehead2011ultimate}.
In fact, our analysis leads us to wonder whether such logarithmic corrections to
scaling should always hold, independent of dimension or boundary
conditions. Behind these claims is a more or less explicit connection
between the fluid dynamical optimal design problems considered herein
and other apparently related model problems from the study of ``energy-driven
pattern formation'' in materials science \cite{kohn2007energy}.
We discuss these considerations in detail at the end. A preliminary version of our methods and results was announced in \cite{tobasco2017optimal}.

\subsection{Main results and methods}

\subsubsection{Non-dimensionalization}

We are concerned with the dependence of energy- and enstrophy-constrained
wall-to-wall optimal transport  \prettyref{eq:energy-constrained-in-units}
and \prettyref{eq:enstrophy-constrained-in-units} in their parameters. 
We make use of two standard non-dimensional quantities.
The Pecl\'et number
\[
Pe=\frac{Uh}{\kappa}
\]
is a dimensionless measure of the intensity of advection relative
to that of diffusion. Transport by \prettyref{eq:advection-diffusion-kappa}
is dominated by advection when $Pe\gg1$ and by diffusion when $Pe\ll1$.
The Nusselt number $Nu$ is a dimensionless measure of the enhancement
of heat transport by convection over that of pure conduction. In the
fluid layer geometry, 
\[
Nu(\mathbf{u})=\frac{\kappa\left\langle |\nabla T|^{2}\right\rangle }{\frac{\kappa}{h^{2}}\left(T_{hot}-T_{cold}\right)^{2}}=1+\frac{h}{\kappa}\frac{1}{T_{hot}-T_{cold}}\left\langle wT\right\rangle .
\]
Note this does not depend on the initial temperature $T_{0}$. 

Such non-dimensionalization reduces the number of free parameters
in \prettyref{eq:energy-constrained-in-units}
and \prettyref{eq:enstrophy-constrained-in-units} to three: the dimensionless group $Pe$ and the aspect
ratios of the domain $\frac{h}{l_{x}}$ and $\frac{h}{l_{y}}$. That
is, it suffices to take
\[
h=\kappa=T_{hot}=1\quad\text{and}\quad T_{cold}=0
\]
and study the dependence of the resulting non-dimensionalized optimal
transport problems
\[
\sup_{\substack{\mathbf{u}(\mathbf{x},t)\\
\left\langle |\mathbf{u}|^{2}\right\rangle =Pe^{2}\\
w|_{\partial\Omega}=0
}
}\,Nu(\mathbf{u})\qquad\text{and}\qquad\sup_{\substack{\mathbf{u}(\mathbf{x},t)\\
\left\langle |\nabla\mathbf{u}|^{2}\right\rangle =Pe^{2}\\
\mathbf{u}|_{\partial\Omega}=\mathbf{0}
}
}\,Nu(\mathbf{u})
\]
on $Pe$, $l_{x}$, and $l_{y}$. Henceforth, we understand the Nusselt number to be given by
\begin{equation}
Nu(\mathbf{u})=\left\langle |\nabla T|^{2}\right\rangle =1+\left\langle wT\right\rangle \label{eq:Nusseltdefn}
\end{equation}
where $T$ is determined from $\mathbf{u}$ by solving the advection-diffusion
equation
\begin{equation}
\partial_{t}T+\mathbf{u}\cdot\nabla T=\Delta T\label{eq:advection-diffusion}
\end{equation}
with Dirichlet boundary conditions 
\[
T|_{z=0}=1\quad\text{and}\quad T|_{z=1}=0
\]
and any essentially bounded initial data $T|_{t=0}=T_{0}$ (the choice
of which is immaterial to our results). The domain $\Omega=\mathbb{T}_{xy}^{2}\times I_{z}$
where $\mathbb{T}_{xy}^{2}$ is identified with $[0,l_{x}]\times[0,l_{y}]$
and $I_{z}=[0,1]$. As always, $\mathbf{u}$ is understood to be divergence-free. 

\subsubsection{Summary of main results}

Our results concern the asymptotic dependence of optimal transport
in the advective regime $Pe\gg1$. Concerning energy-constrained transport,
we find that the maximal transport rate scales linearly in the
r.m.s.\ kinetic energy as $Pe\to\infty$. More precisely, we prove
the following result:
\begin{thm}
\label{thm:mainbounds_w2w_eng} There exist positive constants $C$
and $C'$ so that
\[
CPe\leq\sup_{\substack{\mathbf{u}(\mathbf{x},t)\\
\left\langle |\mathbf{u}|^{2}\right\rangle =Pe^{2}\\
w|_{\partial\Omega}=0
}
}\,Nu(\mathbf{u})\leq\frac{1}{2}Pe
\]
for all $Pe\geq C'$. The constant $C$ is independent of all parameters
and $C'$ depends only on the aspect ratios of the domain. 
\end{thm}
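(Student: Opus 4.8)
The plan is to treat the two inequalities separately: the upper bound by an \emph{a priori} estimate in the spirit of the background method, and the lower bound by exhibiting a concrete family of near-optimal convection rolls. Two structural facts underlie both halves. First, the parabolic maximum principle for \eqref{eq:advection-diffusion} with boundary values $1$ and $0$ forces $0\le T\le 1$; since the influence of any out-of-range initial data decays, we may assume this bound when evaluating $Nu$. Second, incompressibility together with $w|_{\partial\Omega}=0$ gives $\int_{\mathbb{T}^{2}_{xy}}w\,dx\,dy=0$ at every height and time, so that $\langle w\,\phi(z)\rangle=0$ for any function $\phi$ of $z$ alone.

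For the upper bound I would start from $Nu=1+\langle wT\rangle$ (cf.\ \eqref{eq:Nusseltdefn}) and write $\langle wT\rangle=\langle w(T-\tfrac12)\rangle$. Since $\lvert T-\tfrac12\rvert\le\tfrac12$ pointwise and $\langle w^{2}\rangle\le\langle|\mathbf{u}|^{2}\rangle=Pe^{2}$, Cauchy--Schwarz gives $\langle wT\rangle\le\tfrac12 Pe$, hence $Nu\le 1+\tfrac12 Pe$ for every $Pe$ --- already the asserted linear scaling. To upgrade the prefactor to exactly $\tfrac12$ once $Pe$ is large I would run the background method (equivalently, the upper-bound form of the paper's variational principle): decompose $T=\tau(z)+\theta$ with a boundary-layer profile $\tau$ of width $\delta$, reduce the bound to $Nu\le\int_{0}^{1}\lvert\tau'\rvert^{2}\,dz$ subject to nonnegativity of the resulting quadratic ``spectral'' form in $\theta$, and check this form is nonnegative once $\delta\gtrsim Pe^{-1}$; optimizing over $\delta$ then yields $Nu\le\tfrac12 Pe$. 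The Poincaré-type constants entering the spectral check depend on the domain, which is the source of the aspect-ratio dependence of $C'$.

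For the lower bound it suffices to produce, for each large $Pe$, one divergence-free $\mathbf{u}$ with $w|_{\partial\Omega}=0$, $\langle|\mathbf{u}|^{2}\rangle=Pe^{2}$, and $Nu(\mathbf{u})\ge CPe$ with $C$ universal. I would use the steady convection-roll designs of \cite{hassanzadeh2014wall}: a two-dimensional streamfunction flow $\mathbf{u}=(\partial_{z}\psi,0,-\partial_{x}\psi)$ with $\psi$ vanishing on $z=0,1$, organized into roughly $Pe^{1/2}$ cells across the layer (so each cell has width $\sim Pe^{-1/2}$), with $\lvert w\rvert$ of order $Pe$ through the cell interiors and strong horizontal jets, of order $Pe^{3/2}$, confined to near-wall layers of thickness $\sim Pe^{-1}$ where the fluid turns around; the amplitudes are tuned so $\langle|\mathbf{u}|^{2}\rangle=Pe^{2}$. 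One then has to show this flow transports at rate $\gtrsim Pe$: the cells are wide enough ($\gtrsim Pe^{-1/2}$) that rising fluid stays near the hot value and descending fluid near the cold value, the thermal boundary layers are forced to thickness $\sim Pe^{-1}$, and so $\langle-\partial_{z}T|_{z=0}\rangle\sim Pe$. Making this rigorous is cleanest either through maximum-principle barrier functions for \eqref{eq:advection-diffusion} or through the lower-bound (dual) form of the variational principle, which reduces the estimate to exhibiting one admissible test field.

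The heart of the matter is the lower bound. A fixed cellular flow merely rescaled to amplitude $Pe$ transports only at rate $\sim Pe^{1/2}$ (the classical enhanced-diffusion scaling), so the construction must use genuinely $Pe$-dependent geometry; the delicate balance --- cells wide enough to suppress cross-cell diffusion, turnaround jets strong enough to produce $Pe^{-1}$ thermal layers, yet all of it fitting inside the $Pe^{2}$ kinetic-energy budget --- is precisely what pins the cell width at $\sim Pe^{-1/2}$ and the boundary-layer thickness at $\sim Pe^{-1}$. The upper bound is comparatively soft; its only subtlety is getting the constant down to exactly $\tfrac12$ (rather than $\tfrac12 Pe+O(1)$) for $Pe$ beyond an aspect-ratio-dependent threshold.
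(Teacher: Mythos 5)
Your upper bound is essentially the paper's: the maximum principle gives $0\le T\le 1$, then $Nu-1=\langle w(T-\tfrac12)\rangle\le\tfrac12\langle|w|^2\rangle^{1/2}\le\tfrac12 Pe$, which is all the paper does (its stated proposition is $Nu\le 1+\tfrac12\langle|w|^2\rangle^{1/2}$). Your proposed second pass through the background method to ``upgrade the prefactor'' is not in the paper and is not needed for the substance of the result; the linear scaling is already in hand after Cauchy--Schwarz. Your roll geometry for the lower bound also matches the paper exactly: cell width $\sim Pe^{-1/2}$, turning layers of thickness $\sim Pe^{-1}$, $|w|\sim Pe$, $|u|\sim Pe^{3/2}$.

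The genuine gap is that the lower bound --- which you correctly identify as the heart of the theorem --- is never actually proved. You name two possible routes (barrier functions, or the dual variational principle) but carry out neither, and the second one is where all the work in the paper lives. Concretely, \prettyref{thm:dual_Nubound_w2w} reduces the problem to producing a pair $(\mathbf{u},\xi)$ with $\xi|_{\partial\Omega}=0$, $\fint_\Omega w\xi=1$, and
\[
\fint_{\Omega}\bigl|\nabla\Delta^{-1}\text{div}(\mathbf{u}\xi)\bigr|^{2}+Pe^{-2}\fint_{\Omega}|\mathbf{u}|^{2}\cdot\fint_{\Omega}|\nabla\xi|^{2}\lesssim Pe^{-1},
\]
and the entire difficulty is controlling the non-local advection term. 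The paper's resolution is a specific, non-obvious choice: with $\psi=\chi_\delta(z)\,l^{1/2}\Psi(x/l)$, $\Psi=c_0\cos$, one takes $\xi=\chi_\delta(z)\,l^{1/2}\Psi'(x/l)$ (so $\xi\propto w$ up to the $z$-cutoff), whereupon $\nabla^\perp\psi\cdot\nabla\xi=\tfrac12(\chi_\delta^2)'\,\bigl[(\Psi')^2-\Psi\Psi''\bigr](x/l)$ is a function of $z$ alone because $(\Psi')^2-\Psi\Psi''\equiv c_0^2$. All nonzero horizontal Fourier modes of $\text{div}(\mathbf{u}\xi)$ therefore vanish, and by \prettyref{lem:advection_Fourierests_w2w} the advection term collapses to $\fint_{I_z}|\overline{w\xi}-1|^2\lesssim\delta$; optimizing $l\sim\sqrt\delta$, $\delta\sim Pe^{-1}$ then closes the argument. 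Without this choice of $\xi$ and the resulting exact cancellation (or some substitute estimate for the non-local term), the claim ``this flow transports at rate $\gtrsim Pe$'' is an assertion, not a proof --- and the barrier-function alternative you mention is, as far as I know, not a routine construction for a $Pe$-dependent cellular flow. To complete your proposal you would need to supply this test-function computation (or an equivalent quantitative argument).
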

As noted in \cite{hassanzadeh2014wall}, the \emph{a priori} upper bound $Nu\lesssim Pe$
can be proved by a quick application of the maximum principle and
the Cauchy-Schwarz inequality. On the other hand, to prove the lower
bound one must construct a certain family of admissible velocity fields
$\{\mathbf{u}_{Pe}\}$ and prove that their Nusselt numbers scale
linearly in $Pe$ in the advective regime. Such a construction was
described using methods of matched asymptotic analysis in \cite{hassanzadeh2014wall}
(albeit with no attempt to control the errors in the ensuing estimates).
Our construction is inspired by that one: we consider a convection
roll system as in \prettyref{fig:rollsrough} and choose the number
of rolls to scale optimally in $Pe$. Our approach to evaluating $Nu$ allows to rigorously justify the predictions from \cite{hassanzadeh2014wall}
regarding the (near) optimality of such flows.

The enstrophy-constrained problem turns out to be much more
difficult to resolve. We prove that the maximal enstrophy-constrained transport rate scales, up to possible
logarithmic corrections, as the $2/3$rd power of the r.m.s.\ rate-of-strain
as $Pe\to\infty$. Furthermore, we obtain a bound on
the size of any corrections to this scaling:
\begin{thm}
\label{thm:mainbounds_w2w_ens} There exist positive constants $C$,
$C'$, and $C''$ so that

\[
C\frac{1}{\log^{4/3}Pe}Pe^{2/3}\leq\sup_{\substack{\mathbf{u}(\mathbf{x},t)\\
\left\langle |\nabla\mathbf{u}|^{2}\right\rangle =Pe^{2}\\
\mathbf{u}|_{\partial\Omega}=0
}
}\,Nu(\mathbf{u})\leq C'Pe^{2/3}
\]
for all $Pe\geq C''$. The constants $C$ and $C'$ are independent
of all parameters and $C''$ depends only on the aspect ratios of
the domain.
\end{thm}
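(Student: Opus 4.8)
The plan is to establish upper and lower bounds separately, with the lower bound carrying the real content. For the upper bound $Nu \lesssim Pe^{2/3}$, I would work from the variational principle for evaluating transport alluded to in the abstract, using its dual formulation that bounds $Nu$ from above. This is essentially the ``background method'': one seeks a background temperature profile $\tau(z)$ together with a spectral/coercivity constraint so that $Nu \le 1 + \tfrac12\|\tau'\|_{L^2}^2 + (\text{remainder})$, where the remainder must be sign-definite given the enstrophy bound $\langle|\nabla\mathbf u|^2\rangle = Pe^2$ and the no-slip conditions. The key inequality is the estimate controlling the cubic term $\langle w\,\theta\,\tau'\rangle$ by the enstrophy via a Hardy–Rellich-type or Agmon-type bound using $w|_{\partial\Omega}=0$ and $\partial_z w|_{\partial\Omega}=0$ (which follows from no-slip plus incompressibility); choosing a boundary layer of width $\delta \sim Pe^{-2/3}$ for $\tau$ optimizes the tradeoff and yields $Nu \lesssim Pe^{2/3}$. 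I expect this half to be comparatively standard.

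For the lower bound I would exhibit an explicit family of divergence-free, no-slip velocity fields — the ``branching'' designs promised in the introduction — and estimate $Nu$ from below using the other dual formulation of the variational principle (the lower-bound formulation, which by the abstract connects to energy-driven pattern formation, i.e.\ to branching-type constructions as in Kohn–Müller-style analyses). The design: near each wall, use a self-similar cascade of convection cells that refine toward the boundary, with roughly $\log Pe$ generations; the cell aspect ratio and velocity amplitude are tuned at each generation so that the total enstrophy budget $Pe^2$ is respected (the branching is precisely what lets the near-wall cells be both small and not too costly in $\|\nabla\mathbf u\|^2$). In the bulk, a single large roll of amplitude $\sim Pe^{1/3}$ carries heat across. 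One then solves, or more precisely lower-bounds the solution of, the advection–diffusion equation for this $\mathbf u$: in the advection-dominated cells the temperature is nearly constant along streamlines, and the net flux $\langle wT\rangle$ is controlled by the diffusive ``gluing'' layers between generations. Summing the contributions over the $\sim\log Pe$ generations and optimizing produces $Nu \gtrsim Pe^{2/3}/\log^{4/3}Pe$, with the logarithmic loss arising from distributing the enstrophy budget across the generations and from the width of the matching layers.

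The main obstacle is the rigorous lower bound on transport for the branching field. Unlike the energy-constrained case where convection rolls are simple enough to analyze almost explicitly, here one has a multiscale velocity field and must show that diffusion does not short-circuit the advective transport at any scale in the cascade — i.e.\ that the local P\'eclet number stays $\gtrsim 1$ in every generation, and that the errors from the many interfaces between generations do not accumulate to kill the $Pe^{2/3}$ scaling. I would handle this by using the lower-bound variational principle: rather than estimating $T$ directly, choose a good test function (an approximate ``optimal'' temperature and an auxiliary flux potential adapted to the branching geometry) and reduce the problem to a sum of local estimates on single cells, each of which is an elementary advection–diffusion computation. Keeping the constants uniform across generations, so that the geometric sum over $\sim\log Pe$ scales converges to something of order $\log^{-4/3}Pe$ rather than leaking an extra power of $\log$, is the delicate bookkeeping step; an additional subtlety is verifying the no-slip boundary condition and global $H^1$-regularity of the assembled field where cells of different generations meet, which I would arrange by inserting smooth cutoff transition regions whose enstrophy cost is subdominant.
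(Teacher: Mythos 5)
Your strategy matches the paper's in all essentials, with one packaging difference on the easy half. For the upper bound you propose the background method with a boundary layer of width $\delta\sim Pe^{-2/3}$; the paper instead derives $Nu\lesssim Pe^{2/3}$ from its ``symmetrization'' bound $Nu\le\inf_\eta\langle|\nabla\eta|^2+|\nabla\Delta^{-1}[\partial_t\eta+\mathrm{div}(\mathbf u\eta)]|^2\rangle$ with the same piecewise-linear profile, the key estimate being $|\overline{w\theta}(z)|\lesssim (z\wedge(1-z))\,\|\partial_z w\|_{L^2}\|\partial_z\theta\|_{L^2}$; Section 5 then shows the two bounds coincide exactly, so nothing is lost either way. (Note this estimate needs only $w|_{\partial\Omega}=0$, not $\partial_z w|_{\partial\Omega}=0$, which is why the upper bound extends to no-penetration and stress-free walls.) For the lower bound your plan --- a branching cascade of $\sim\log Pe$ generations evaluated not by solving the advection--diffusion equation but by inserting a test pair into the dual lower-bound variational principle and localizing the estimates --- is exactly the paper's route: it sets $\xi=w$, expands the advection term $\fint_\Omega|\nabla\Delta^{-1}\mathrm{div}(\mathbf u\xi)|^2$ mode by mode, and enforces period doubling $l_{k+1}=l_k/2$ so that the only surviving wavenumbers $k_j^{\mathrm{sum}},k_j^{\mathrm{diff}}$ never interfere across generations; Green's-function kernel bounds then deliver the cell-by-cell estimates you anticipate, and the reduced one-dimensional problem for the lengthscale profile yields $\ell(z)\sim\epsilon^{1/6}\log^{1/6}(1/\epsilon)\sqrt{1-z}$ and the $\log^{-4/3}$ loss. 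One detail to correct: the bulk is not ``a single large roll.'' The optimization forces $l_{\mathrm{bulk}}\sim Pe^{-1/3}\log^{1/3}Pe$, so the bulk rolls are already narrow and strongly anisotropic; refining from an $O(1)$ roll down to $l_{\mathrm{bl}}$ would cost $\int(\ell')^2\,dz\sim\log Pe$ in the advection term and destroy the claimed scaling.
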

\begin{rem} \label{rem:bcremark}
The same bounds apply to enstrophy-constrained optimal transport between no-penetration or stress-free walls.
Indeed, by a simple inclusion argument, maximal transport between impenetrable walls is never less than for stress-free walls, and both are bounded below by maximal transport between no-slip walls.
Since the \emph{a priori} upper bound $Nu\lesssim Pe^{2/3}$ applies so long as $w|_{\partial\Omega}=0$ (this is what is proved in \prettyref{sec:aprioribds}), the result follows.
\end{rem}
This result concerning the $2/3$-scaling law of enstrophy-constrained
wall-to-wall optimal transport \textemdash modulo logarithms \textemdash was
first announced in our paper \cite{tobasco2017optimal}. The present
paper provides all the mathematical details of the analysis outlined
there, as well as a much more complete discussion of our general approach
to the optimal design of heat transport. The bulk of it is
devoted to motivating and evaluating the branching flows depicted
in \prettyref{fig:branchingrough}, which are the key to proving the
logarithmically corrected lower bound from \prettyref{thm:mainbounds_w2w_ens}.

After this work was completed, a computational study of the Euler-Lagrange equations for the enstrophy-constrained wall-to-wall optimal transport problem reported convincing numerical evidence for velocity fields that produce $Nu \sim Pe^{2/3}$ in three dimensions \cite{motoki2018maximal}. Interestingly, numerical studies of the two-dimensional problem have thus far failed to produce heat transport scaling of this sort \cite{hassanzadeh2014wall,motoki2018optimal,SD}. Whereas the velocity fields produced in these two-dimensional studies feature near-wall ``recirculation zones'', which serve to enhance heat transport at moderate $Pe$, they come nowhere near the complexity of our branching flows. The three-dimensional computations, however, do exhibit branching of a fully three-dimensional character. Whether or not such three-dimensional branching flows can be constructed so as to eliminate the logarithmic gap in \prettyref{thm:mainbounds_w2w_ens} as $Pe\to \infty$ remains to be seen.

\begin{figure}
\centering

\subfloat[]{\includegraphics[width=0.33\paperwidth,height=0.25\paperheight]{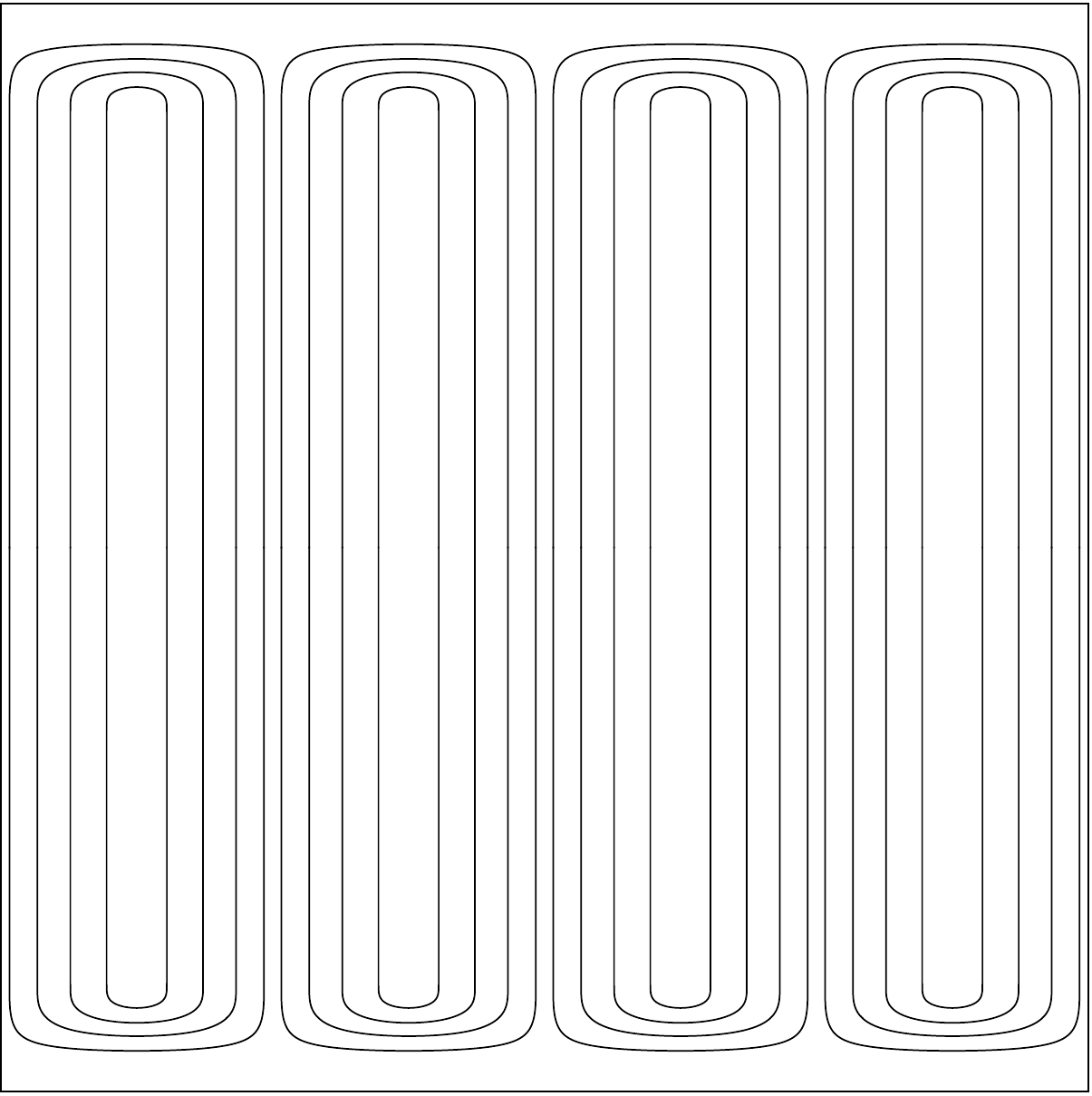}\label{fig:rollsrough}
}\hspace{4em}\subfloat[]{\includegraphics[width=0.33\paperwidth,height=0.25\paperheight]{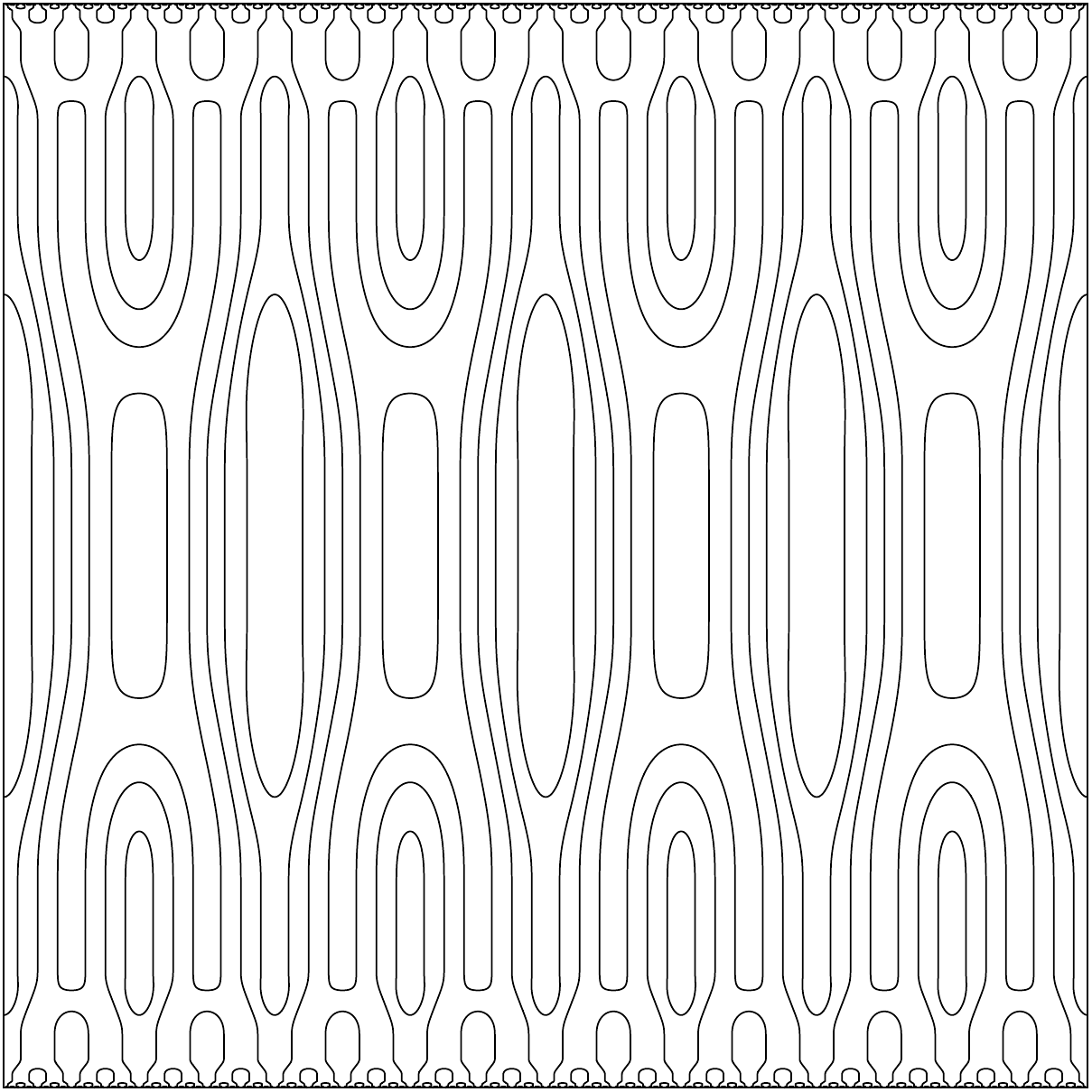}\label{fig:branchingrough}
}
%

\caption{Streamlines from two families of velocity fields considered in this
paper: (A) the convection roll construction and (B) the branching
construction. The former involves a single horizontal wavenumber while
the latter involves multiple horizontal wavenumbers, the total number
of which is allowed to diverge in the advective limit $Pe\to\infty$.
Such constructions are useful for establishing (nearly) sharp lower
bounds on wall-to-wall optimal transport. \label{fig:designs} }
\end{figure}

\subsubsection{Outline of the approach}

\prettyref{thm:mainbounds_w2w_eng} and \prettyref{thm:mainbounds_w2w_ens}
contain two types of statements: \emph{a priori }upper bounds on the
Nusselt number $Nu$ that hold for all velocity fields, and matching
lower bounds on $Nu$ for suitable designs. Methods to establish rigorous
upper bounds on convective transport go back at least to Howard in
the context of turbulent buoyancy-driven convection \cite{howard1963heat},
and Constantin and one of the authors who developed the ``background method''
to prove upper bounds on $Nu$ (albeit absent Howard's hypothesis
of statistical stationarity) \cite{constantin1995variational,doering1994variational,doering1996variational}.
Although a suitably adapted background method can be applied here
\cite{SD} we do not proceed in this way. Instead, we present a new
method for establishing upper bounds based on the fact that, for steady
velocity fields, there exists a variational principle for evaluating
heat transport. In the time-dependent case, this leads to new variational
bounds on $Nu$ that imply the background method. The bound we obtain
is as follows:
\begin{equation}
Nu(\mathbf{u})\leq\inf_{\eta}\,\left\langle |\nabla\eta|^{2}+|\nabla\Delta^{-1}\left[\partial_{t}\eta+\text{div}(\mathbf{u}\eta)\right]|^{2}\right\rangle \label{eq:unsteady_ub}
\end{equation}
where $\eta$ must satisfy
\[
\eta|_{z=0}=1\quad\text{and}\quad\eta|_{z=1}=0.
\]
Here and throughout $\Delta^{-1}$ denotes the inverse Laplacian operator
with vanishing Dirichlet boundary conditions. The bound \prettyref{eq:unsteady_ub}
is sharp for steady flows; in that case \prettyref{eq:unsteady_ub}
becomes an equality and $\eta$ need not depend on time.

In contrast, methods to establish rigorous lower bounds on $Nu$ are
far and few between. The righthand side of \prettyref{eq:unsteady_ub}
is a convex minimization. Therefore, on general grounds, there should
exist a concave maximization that is its dual. We find that
\begin{equation}
Nu(\mathbf{u})-1\geq\sup_{\xi}\,\left\langle 2w\xi-|\nabla\xi|^{2}-|\nabla\Delta^{-1}\left[\partial_{t}\xi+\text{div}(\mathbf{u}\xi)\right]|^{2}\right\rangle \label{eq:unsteady_lb}
\end{equation}
where $\xi$ must satisfy
\[
\xi|_{z=0}=0\quad\text{and}\quad\xi|_{z=1}=0.
\]
As with \prettyref{eq:unsteady_ub}, the bound \prettyref{eq:unsteady_lb}
becomes sharp for $\mathbf{u}$ that do not depend on time. 

Armed with these observations, we describe a new duality-based approach
to producing candidate designs. Consider the steady enstrophy-constrained
wall-to-wall optimal transport problem
\begin{equation}
\max_{\substack{\mathbf{u}(\mathbf{x})\\
\fint_{\Omega}|\nabla\mathbf{u}|^{2}=Pe^{2}\\
\mathbf{u}|_{\partial\Omega}=0
}
}Nu(\mathbf{u}),\label{eq:enstrophy-constrainedpblm-steady}
\end{equation}
whose optimal value bounds the unsteady maximum from below. Appealing
to the steady version of \prettyref{eq:unsteady_lb}, we find that
\prettyref{eq:enstrophy-constrainedpblm-steady} can be rewritten
as
\begin{equation}
\min_{\substack{\mathbf{u}(\mathbf{x}),\xi(\mathbf{x})\\
\fint_{\Omega}w\xi=1\\
\mathbf{u}|_{\partial\Omega}=\mathbf{0},\xi|_{\partial\Omega}=0
}
}\,\fint_{\Omega}\left|\nabla\Delta^{-1}\text{div}(\mathbf{u}\xi)\right|^{2}+\epsilon\fint_{\Omega}|\nabla\mathbf{u}|^{2}\cdot\fint_{\Omega}|\nabla\xi|^{2}\label{eq:optl_design_pblm}
\end{equation}
where $\epsilon=Pe^{-2}$. Indeed, the optimal values of \prettyref{eq:enstrophy-constrainedpblm-steady}
and \prettyref{eq:optl_design_pblm} are reciprocals and their optimizers
are in correspondence. Thus, solving the steady enstrophy-constrained
problem \prettyref{eq:enstrophy-constrainedpblm-steady} for $Pe\gg1$
is equivalent to solving \prettyref{eq:optl_design_pblm} for $\epsilon\ll1$.
We refer to \prettyref{eq:optl_design_pblm} as an ``integral'' formulation
of wall-to-wall optimal transport.

The family of variational problems \prettyref{eq:optl_design_pblm}
is non-convex and singularly perturbed. The situation shares important
similarities with other model problems from the field of ``energy-driven
pattern formation'' in materials science \cite{kohn2007energy}.
These include the study of branching patterns in micromagnetics \cite{choksi1998bounds,choksi1999domain}
and wrinkling cascades in thin elastic sheets \cite{belgacem2000rigorous,jin2001energy,ortiz1994morphology}.
For such problems, it is known that certain patterns which, at a glance,
look like \prettyref{fig:branchingrough} provide nearly optimal ways
of matching low energy states that are geometrically incompatible
but forced to coexist. We discuss such connections further in \prettyref{sec:energy-driven-pattern-formation}.

Of course, \prettyref{eq:optl_design_pblm} does not derive from materials
science but instead from fluid dynamics. We note the striking similarities
between it and Howard's variational problem, the latter of which gave
birth to the field of variational\emph{ }bounds on turbulent transport
\cite{howard1963heat}. It was recognized by Busse \cite{busse1969howards}
that Howard's problem should admit multiply-scaled optimizers. The
resulting construction is known as Busse's ``multi-$\alpha$'' technique.
After suitable modifications (wall-to-wall optimal transport and Howard's
problem are in the end quite distinct) Busse's techniques can also be used to
study \prettyref{eq:optl_design_pblm}. We consider these connections
further in \prettyref{sec:Implications-for-RBC}.

By either analogy, we are led to construct self-similar branching
flows as candidates for \prettyref{eq:optl_design_pblm}. The streamlines
depicted in \prettyref{fig:branchingrough} are symmetric about $z=1/2$;
each half of the domain is made up of $n$ convection roll systems
coupled through $n-1$ transition layers. In the bulk there are large
anisotropic convection rolls at some horizontal length-scale $l_{\text{bulk}}$.
Streamlines refine away from the bulk until there results an isotropic
convection roll system at some much smaller length-scale $l_{\text{bl}}$.
The entire construction can be modeled by a single length-scale function
$\ell(z)$ that interpolates through the layers. In terms of $\ell$,
we find the optimal branching construction to be picked out by the
solution of 
\begin{equation}
\min_{\substack{\ell(z)\\
\ell(z_{\text{bulk}})=l_{\text{bulk}}\\
\ell(z_{\text{bl}})=l_{\text{bl}}
}
}\,l_{\text{bl}}+\int_{z_{\text{bulk}}}^{z_{\text{bl}}}(\ell')^{2}\,dz+\epsilon\left(\frac{1}{l_{\text{bulk}}^{2}}+\int_{z_{\text{bulk}}}^{z_{\text{bl}}}\frac{1}{\ell^{2}}\,dz+\frac{1}{l_{\text{bl}}}\right)^{2}, \label{eq:1d_w2w}
\end{equation}
which satisfies
\begin{align*}
\ell(z) & \sim\epsilon^{1/6}\log^{1/6}\,\frac{1}{\epsilon}\,\sqrt{1-z}\quad z\in[z_{\text{bulk}},z_{\text{bl}}],\\
l_{\text{bulk}} & \sim\epsilon^{1/6}\log^{1/6}\,\frac{1}{\epsilon},\ l_{\text{bl}}\sim\epsilon^{1/3}\log^{1/3}\,\frac{1}{\epsilon}.
\end{align*}
Although this analysis does not prove that optimal designs must exhibit
fluctuations according to these rules, it does yield designs sufficient
to obtain the asserted lower bounds from \prettyref{thm:mainbounds_w2w_ens}.
The lower bounds from \prettyref{thm:mainbounds_w2w_eng} on energy-constrained
optimal transport are much simpler to obtain, and serve as a test
case for our approach.

\subsection{Outline of the paper}

\prettyref{sec:aprioribds} proves the \emph{a priori} upper bounds
from \prettyref{thm:mainbounds_w2w_eng} and \prettyref{thm:mainbounds_w2w_ens}
and establishes the variational principles and bounds on $Nu$ alluded
to above. The proof of the lower bounds from \prettyref{thm:mainbounds_w2w_eng}
and \prettyref{thm:mainbounds_w2w_ens} is spread across \prettyref{sec:OptlDesignProb_w2w},
\prettyref{sec:energy-constraineddesign}, and \prettyref{sec:enstrophy_constrained_design}.
In \prettyref{sec:OptlDesignProb_w2w} we describe our general approach
to the optimal design of heat transport. In \prettyref{sec:energy-constraineddesign}
we test our methods on the steady energy-constrained problem and obtain
a proof of the lower bound part of \prettyref{thm:mainbounds_w2w_eng}.
In \prettyref{sec:enstrophy_constrained_design} we consider the steady
enstrophy-constrained problem and prove the lower bound part of \prettyref{thm:mainbounds_w2w_ens}.
We conclude in \prettyref{sec:Implications-for-RBC} and \prettyref{sec:energy-driven-pattern-formation}
with a discussion of bounds on turbulent heat transport, and
a discussion of wall-to-wall optimal transport as a problem of energy-driven
pattern formation. 

\subsection{Notation}

Having non-dimensionalized, we employ the domain $\Omega=\mathbb{T}_{xy}^{2}\times I_{z}$
where $\mathbb{T}_{xy}^{2}$ is identified with $[0,l_{x}]\times[0,l_{y}]$
and $I_{z}=[0,1]$. The spatial average of an integrable function
$f$ on $\Omega$ is denoted by
\[
\fint_{\Omega}f=\frac{1}{|\Omega|}\int_{\Omega}f(\mathbf{x})\,d\mathbf{x}
\]
where $|\Omega|=|\mathbb{T}_{xy}^{2}|=l_{x}l_{y}$. Generally speaking,
$\fint$ indicates a well-defined average over the indicated domain
of integration. Some distinguished averages used in this paper include
\[
\overline{f}=\fint_{\mathbb{T}_{xy}^{2}}f=\frac{1}{|\mathbb{T}_{xy}^{2}|}\int_{\mathbb{T}_{xy}^{2}}f(x,y,\cdot)\,dxdy
\]
which averages over the periodic variables $x$ and $y$, the (limit
superior) space and long-time average
\[
\left\langle f\right\rangle =\limsup_{\tau\to\infty}\,\fint_{0}^{\tau}\fint_{\Omega}f=\limsup_{\tau\to\infty}\,\frac{1}{\tau}\frac{1}{|\Omega|}\int_{0}^{\tau}\int_{\Omega}f(\mathbf{x},t)\,d\mathbf{x}dt
\]
and the truncated space and time average
\[
\left\langle f\right\rangle _{\tau}=\fint_{0}^{\tau}\fint_{\Omega}f=\frac{1}{\tau}\frac{1}{|\Omega|}\int_{0}^{\tau}\int_{\Omega}f(\mathbf{x},t)\,d\mathbf{x}dt.
\]

We use the standard $L^{2}$- and $\dot{H}^{1}$-norms for functions
on $\Omega$,
\[
||f||_{L^{2}(\Omega)}=\sqrt{\int_{\Omega}|f|^{2}}\quad\text{and}\quad||f||_{\dot{H}^{1}(\Omega)}=\sqrt{\int_{\Omega}|\nabla f|^{2}}.
\]
The set of smooth and compactly supported functions on $\Omega$ is
$C_{c}^{\infty}(\Omega)$. The Sobolev space $H_{0}^{1}(\Omega)$
is its completion in the norm $||\cdot||_{\dot{H}^{1}(\Omega)}$.
We use $\left(\cdot,\cdot\right)$ to denote the duality pairing of
$H^{-1}$ with $H_{0}^{1}$. We denote by $\Delta^{-1}$ the inverse
Laplacian operator with vanishing Dirichlet boundary conditions, which
is well-defined from $H^{-1}(\Omega)\to H_{0}^{1}(\Omega)$. 

The notation $X\lesssim Y$ means that there exists a positive constant
$C$ not depending on any parameters such that $X\leq CY$. We use
the notations $X\wedge Y=\min\{X,Y\}$ and $X\vee Y=\max\{X,Y\}$.

\subsection{Acknowledgements}

We thank R.~V.\ Kohn and A.~N.\ Souza for helpful discussions.
This work was supported by NSF Awards DGE-0813964 and DMS-1812831 (IT), DMS-1515161 and DMS-1813003
(CRD), a Van Loo Postdoctoral Fellowship (IT) and a Guggenheim Foundation
Fellowship (CRD).

\section{\emph{A priori} bounds on wall-to-wall optimal transport\label{sec:aprioribds}}

We begin our analysis of wall-to-wall optimal transport by proving
the \emph{a priori} upper bounds from \prettyref{thm:mainbounds_w2w_eng}
and \prettyref{thm:mainbounds_w2w_ens}. Unless otherwise explicitly
stated, we consider throughout that $\left\langle |\mathbf{u}|^{2}\right\rangle <\infty$
so that \prettyref{eq:advection-diffusion} is well-posed.

The upper bound from \prettyref{thm:mainbounds_w2w_eng} on energy-constrained
transport is straightforward to prove, and we dispatch with it first.
\begin{prop}
We have that
\[
Nu(\mathbf{u})\leq1+\frac{1}{2}\left\langle |w|^{2}\right\rangle ^{1/2}
\]
whenever $w|_{\partial\Omega}=0$.
\end{prop}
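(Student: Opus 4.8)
The plan is to bound the convective flux in the identity $Nu(\mathbf u)=1+\langle wT\rangle$ from \eqref{eq:Nusseltdefn}, the goal being $\langle wT\rangle\le\tfrac12\langle|w|^2\rangle^{1/2}$. A crude application of Cauchy--Schwarz together with the pointwise bound $|T|\le 1$ coming from the maximum principle would only give $\langle wT\rangle\le\langle|w|^2\rangle^{1/2}$, so the point is to recover the factor $\tfrac12$ by centering. First I would observe that $w$ has zero average against any function of $z$ alone: since $\mathbf u$ is divergence-free and periodic in $x,y$, averaging $\partial_x u+\partial_y v+\partial_z w=0$ over $\mathbb{T}_{xy}^{2}$ gives $\partial_z\overline w=0$, and $w|_{z=0}=0$ then forces $\overline w\equiv 0$; hence $\langle w\,g(z)\rangle=0$ for every $g$ and in particular $\langle wT\rangle=\langle w(T-\tfrac12)\rangle$. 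Applying Cauchy--Schwarz to the centered flux reduces the claim to the estimate $\langle(T-\tfrac12)^2\rangle\le\tfrac14$.

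To prove $\langle(T-\tfrac12)^2\rangle\le\tfrac14$ I would run an energy argument rather than merely invoke a pointwise maximum principle, both for self-containedness and to handle the fact that $\mathbf u$ may depend on time. Write $(T-1)^+$ and $T^-=(-T)^+$ for the parts of $T$ overshooting the boundary values; each vanishes on $\partial\Omega$. Testing \eqref{eq:advection-diffusion} against $(T-1)^+$, the diffusion term gives $-\fint_\Omega|\nabla(T-1)^+|^2$ and the advection term vanishes because $\mathbf u$ is divergence-free and $((T-1)^+)^2$ vanishes on $\partial\Omega$, so $\tfrac{d}{dt}\fint_\Omega((T-1)^+)^2=-2\fint_\Omega|\nabla(T-1)^+|^2\le 0$; Poincar\'e's inequality (using the vanishing trace on $z=0,1$) upgrades this to exponential decay, and likewise for $T^-$. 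Hence $\langle((T-1)^+)^2\rangle=\langle(T^-)^2\rangle=0$. Decomposing $T=\widetilde T+(T-1)^+-T^-$ with $\widetilde T=\max\{0,\min\{T,1\}\}\in[0,1]$ gives $|T-\tfrac12|\le|\widetilde T-\tfrac12|+(T-1)^++T^-\le\tfrac12+(T-1)^++T^-$, and the triangle inequality for the seminorm $\langle(\cdot)^2\rangle^{1/2}$ yields $\langle(T-\tfrac12)^2\rangle^{1/2}\le\tfrac12$. Combining with the previous paragraph, $Nu(\mathbf u)=1+\langle wT\rangle\le1+\langle|w|^2\rangle^{1/2}\langle(T-\tfrac12)^2\rangle^{1/2}\le1+\tfrac12\langle|w|^2\rangle^{1/2}$.

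The only genuinely delicate point is bookkeeping with the $\limsup$ defining $\langle\cdot\rangle$: one applies Cauchy--Schwarz at each finite horizon $\tau$ and then uses $\limsup_\tau(a_\tau b_\tau)\le(\limsup_\tau a_\tau)(\limsup_\tau b_\tau)$ for nonnegative sequences, and one needs the transient contributions of $(T-1)^+$ and $T^-$ to the time average to vanish, which is immediate from their exponential decay. Everything else is elementary, so I expect the write-up to be short; the maximum-principle/energy step is the part most worth stating carefully.
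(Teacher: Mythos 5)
Your proof is correct, and it reaches the same centering trick $\langle wT\rangle=\langle w(T-\tfrac12)\rangle$ that drives the paper's argument (there the zero-mean fact is stated simply as $\langle w\rangle=0$ from $w|_{\partial\Omega}=0$; your derivation via $\partial_z\overline w=0$ is the standard justification). Where you diverge is in how the bound $T\approx[0,1]$ is obtained and exploited. The paper invokes the freedom to choose the initial data $T_0=1-z$ — legitimate because $Nu$ is independent of $T_0$ — and then the parabolic maximum principle gives $0\le T\le1$ pointwise, after which the estimate is $\langle w(T-\tfrac12)\rangle\le\langle|w|\rangle\,\|T-\tfrac12\|_{L^\infty}\le\tfrac12\langle|w|^2\rangle^{1/2}$, i.e.\ an $L^1\times L^\infty$ pairing followed by Jensen. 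You instead keep arbitrary bounded $T_0$, prove that the overshoots $(T-1)^+$ and $T^-$ decay exponentially by an energy/Poincar\'e argument, and then use Cauchy--Schwarz in $L^2$ together with $\langle(T-\tfrac12)^2\rangle^{1/2}\le\tfrac12$. Both routes yield the same constant. Yours is slightly longer but more self-contained: it does not lean on the (true, but earlier-asserted) fact that $Nu$ is independent of $T_0$, and in effect it proves the relevant piece of that assertion. The paper's version is shorter and its $L^\infty$ control of $T$ is reused elsewhere, which is presumably why it is organized that way. Your bookkeeping with the $\limsup$ (subadditivity of $\limsup$ over products of nonnegative sequences, plus vanishing of the transient contributions) is exactly the care the statement requires.
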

\begin{proof}
Let us recall the argument from \cite{hassanzadeh2014wall}. First,
note that $Nu$ does not depend on the initial temperature $T_{0}$.
Thus, we can take $T_{0}=1-z$ and conclude by the maximum principle
that the associated solution of \prettyref{eq:advection-diffusion}
satisfies
\[
0\leq T\leq1\quad\text{a.e.}
\]
Note also that because $w$ vanishes at $\partial\Omega$,
\[
\left\langle w\right\rangle =0.
\]
Combining this with Jensen's inequality and the definition of the
Nusselt number \prettyref{eq:Nusseltdefn}, we have that
\begin{align*}
Nu-1 & =\left\langle wT\right\rangle =\left\langle w(T-\frac{1}{2})\right\rangle \\
 & \leq\left\langle |w|\right\rangle ||T-\frac{1}{2}||_{L_{tx}^{\infty}}\leq\frac{1}{2}\left\langle |w|^{2}\right\rangle ^{1/2}.
\end{align*}
\end{proof}
The remainder of this section is on upper bounds for enstrophy-constrained
transport. We prove the following bound:
\begin{prop}
\label{prop:ensbound_w2w} There exists a positive constant $C$ such
that
\[
Nu(\mathbf{u})\leq1+C\left\langle |\nabla w|^{2}\right\rangle \wedge\left\langle |\nabla w|^{2}\right\rangle ^{1/3}.
\]
whenever $w|_{\partial\Omega}=0$. This constant is independent of
all parameters.
\end{prop}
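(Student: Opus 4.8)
The claimed inequality bundles two estimates that I would prove separately and then combine: a linear bound $Nu(\mathbf{u})-1\lesssim\langle|\nabla w|^{2}\rangle$, sharp in the diffusive regime, and a $1/3$-power bound $Nu(\mathbf{u})-1\lesssim\langle|\nabla w|^{2}\rangle^{1/3}$, sharp in the advective regime. Both are produced by feeding a well-chosen \emph{time-independent} profile $\eta=\eta(z)$, with $\eta(0)=1$ and $\eta(1)=0$, into the variational upper bound \prettyref{eq:unsteady_ub} (equivalently, one may run the classical energy identity for the temperature fluctuation $T-\eta$). Since $\eta$ is a function of $z$ alone and $\mathbf{u}$ is divergence-free, $\partial_{t}\eta+\mathrm{div}(\mathbf{u}\eta)=w\,\eta'(z)$, and \prettyref{eq:unsteady_ub} becomes
\[
Nu(\mathbf{u})\leq\int_{0}^{1}(\eta')^{2}\,dz+\big\langle\,\big|\nabla\Delta^{-1}(w\eta')\big|^{2}\,\big\rangle .
\]
The one auxiliary fact used throughout is elementary: whenever $f$ vanishes on $\{z=0\}$, one has $\fint_{\Omega}f^{2}\,\mathbbm{1}_{z\leq\delta}\leq\tfrac{\delta^{2}}{2}\fint_{\Omega}|\partial_{z}f|^{2}$ for every $\delta\in(0,1]$ (from $f(x,y,z)=\int_{0}^{z}\partial_{z}f$ and Cauchy--Schwarz), whose $\delta=1$ case is vertical Poincar\'e.

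For the linear bound I take the conduction profile $\eta=1-z$, so $\int_{0}^{1}(\eta')^{2}=1$ and $w\eta'=-w$, giving $Nu(\mathbf{u})\leq1+\langle|\nabla\Delta^{-1}w|^{2}\rangle$. With $g=\Delta^{-1}w\in H_{0}^{1}(\Omega)$, integrating by parts gives $\fint_{\Omega}|\nabla g|^{2}=-\fint_{\Omega}g\,w\leq\|g\|_{L^{2}}\|w\|_{L^{2}}$, and two applications of vertical Poincar\'e (to $g$, then to $w$, both of which vanish at $z=0$) yield $\fint_{\Omega}|\nabla g|^{2}\lesssim\fint_{\Omega}|\partial_{z}w|^{2}\leq\fint_{\Omega}|\nabla w|^{2}$; a $\limsup$ in time gives $Nu(\mathbf{u})-1\lesssim\langle|\nabla w|^{2}\rangle$.

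For the $1/3$-power bound — the substantive case — I would instead use a boundary-layer profile of free width $\delta\in(0,1]$: $\eta(z)=1-z/\delta$ for $z\leq\delta$ and $\eta\equiv0$ for $z\geq\delta$, so that $\int_{0}^{1}(\eta')^{2}=\delta^{-1}$ and $w\eta'=-\delta^{-1}w\,\mathbbm{1}_{z\leq\delta}$. Writing $g=\Delta^{-1}(w\eta')\in H_{0}^{1}(\Omega)$ and integrating by parts,
\[
\fint_{\Omega}|\nabla g|^{2}=\frac{1}{\delta}\fint_{\Omega}g\,w\,\mathbbm{1}_{z\leq\delta}\leq\frac{1}{\delta}\Big(\fint_{\Omega}g^{2}\mathbbm{1}_{z\leq\delta}\Big)^{1/2}\Big(\fint_{\Omega}w^{2}\mathbbm{1}_{z\leq\delta}\Big)^{1/2}.
\]
The key observation is that \emph{both} $g$ and $w$ vanish at $z=0$, so the auxiliary estimate applies to each factor and each contributes a full power of $\delta$; this gives $\fint_{\Omega}|\nabla g|^{2}\lesssim\delta\,(\fint_{\Omega}|\nabla g|^{2})^{1/2}(\fint_{\Omega}|\nabla w|^{2})^{1/2}$, hence $\langle|\nabla\Delta^{-1}(w\eta')|^{2}\rangle\lesssim\delta^{2}\langle|\nabla w|^{2}\rangle$ after cancelling and taking $\limsup$. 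Therefore $Nu(\mathbf{u})\lesssim\delta^{-1}+\delta^{2}\langle|\nabla w|^{2}\rangle$, and choosing $\delta\sim\langle|\nabla w|^{2}\rangle^{-1/3}$ — which lies in $(0,1]$ once $\langle|\nabla w|^{2}\rangle$ exceeds a universal constant — yields $Nu(\mathbf{u})\lesssim\langle|\nabla w|^{2}\rangle^{1/3}$. For $\langle|\nabla w|^{2}\rangle$ below that universal constant the linear bound already gives $Nu(\mathbf{u})-1\lesssim\langle|\nabla w|^{2}\rangle\lesssim\langle|\nabla w|^{2}\rangle^{1/3}$, so the two bounds combine into the asserted $Nu(\mathbf{u})-1\lesssim\langle|\nabla w|^{2}\rangle\wedge\langle|\nabla w|^{2}\rangle^{1/3}$; the constant is universal since only vertical Poincar\'e-type inequalities enter, with no dependence on the aspect ratios.

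The main obstacle is controlling the $\Delta^{-1}$ term sharply enough. Estimating it crudely, $\langle|\nabla\Delta^{-1}(w\eta')|^{2}\rangle\lesssim\|w\eta'\|_{L^{2}}^{2}=\delta^{-2}\fint_{\Omega}w^{2}\mathbbm{1}_{z\leq\delta}\lesssim\fint_{\Omega}|\nabla w|^{2}$, loses all $\delta$-dependence and reproduces only the linear bound. The gain that produces the exponent $1/3$ comes entirely from recognizing that the potential $g=\Delta^{-1}(w\eta')$ is itself pinned to zero at the same wall where $\eta'$ is supported and where $w$ vanishes, so that localizing $\fint_{\Omega}g\,w$ to the layer $\{z\leq\delta\}$ is cheap for \emph{both} $g$ and $w$ — a factor $\delta$ each — and these combine with the $\delta^{-1}$ coming from $\int_{0}^{1}(\eta')^{2}$ to balance at $\delta\sim\langle|\nabla w|^{2}\rangle^{-1/3}$. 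Everything else is Cauchy--Schwarz, vertical Poincar\'e, and the $\limsup$ bookkeeping already used in this section.
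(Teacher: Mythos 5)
Your argument is correct and is essentially the paper's own proof: both feed a piecewise-linear boundary-layer profile $\eta_\delta$ (with $\fint_\Omega|\nabla\eta_\delta|^2\sim\delta^{-1}$) into the symmetrization bound, gain the crucial factor $\delta^2$ on the non-local term from the fact that \emph{both} $w$ and the potential $\Delta^{-1}(w\eta_\delta')$ vanish at the wall where $\eta_\delta'$ is supported, and then optimize $\delta\sim\langle|\nabla w|^2\rangle^{-1/3}$. The only differences are cosmetic: you integrate by parts against the potential $g$ directly where the paper passes through the dual $\sup_\theta$ characterization and its Lemma~\ref{lem:lingrowth_wtheta} on the linear growth of $\overline{w\theta}$, and your profile has a layer at one wall rather than both — neither change affects the result or the universality of the constant.
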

To the authors' knowledge, there are at least three proofs of \prettyref{prop:ensbound_w2w}.
For one, it can be obtained via an application of the background method
\cite{SD}. It can also be seen as a consequence of Seis' arguments
from \cite{seis2015}. Our proof of \prettyref{prop:ensbound_w2w}
is different from either of these: we obtain it via a new approach
using a Dirichlet-type variational principle for the functional $Nu(\mathbf{u})$. 

It should be mentioned that we are not the first to notice the variational
structure of the advection-diffusion equation. The existence of a
variational principle for advection-diffusion in bounded domains appears
to have been first reported in \cite{ortiz1985variational}, where
it was used to systematically derive ``best approximation'' finite
element schemes. Around the same time, as described in \cite{milton1990characterizing},
variational principles for computing effective complex conductivities
in periodic homogenization were discovered by Gibiansky and Cherkaev
(the relevant corrector equation is again divergence-form but not
self-adjoint). We learned about the existence of such principles from
the papers \cite{avellaneda1991integral,fannjiang1994convection},
whose formulas for computing effective diffusivities in periodic homogenization
inspired the formulas for $Nu$ obtained below. Let us also mention
the related work \cite{ghoussoub2009selfdual} which discusses non-standard
variational principles for PDEs at large. It was a pleasant surprise
to learn that the seemingly \emph{ad hoc} change of variables introduced
in \cite{hassanzadeh2014wall} for handling the Euler-Lagrange equations
of wall-to-wall optimal transport turn out to be similar to those
employed in previous works, and that behind it all is a variational
principle for $Nu$. 

The remainder of this section is organized as follows. First we establish
a variational principle for $Nu$ in the steady case where the reasoning
is most transparent. We then extend the arguments to general unsteady
flows, where the variational principle turns into a variational bound
as anticipated in \prettyref{eq:unsteady_ub}. \prettyref{prop:ensbound_w2w}
follows immediately thereafter. Later on in \prettyref{sec:OptlDesignProb_w2w},
we obtain the dual formula to bound $Nu$ from below. In order to
highlight the key step in the proof \textemdash a certain symmetrizing
change of variables for the advection-diffusion equation \textemdash we
refer to this as the ``symmetrization method''.

\subsection{The symmetrization method for steady velocity fields}

We start with the case where $\mathbf{u}$ is an arbitrary divergence-free
vector field belonging to $L^{2}(\Omega;\mathbb{R}^{3})$. In this
case,
\[
Nu(\mathbf{u})=1+\fint_{\Omega}w\theta=1+\fint_{\Omega}|\nabla\theta|^{2}
\]
where $\theta=T-(1-z)$ is the deviation of the temperature field
from the conductive state. That is, $\theta$ is the unique (essentially
bounded) weak solution of 
\[
\mathbf{u}\cdot\nabla\theta=\Delta\theta+w
\]
with zero Dirichlet boundary data $\theta|_{\partial\Omega}=0$. To
change variables, we let $\theta_{\pm}$ be the unique weak solutions
of the pair of formally adjoint PDEs
\begin{equation}
\pm\mathbf{u}\cdot\nabla\theta_{\pm}=\Delta\theta_{\pm}+w\label{eq:adjointsteadyPDEs}
\end{equation}
with $\theta_{\pm}|_{\partial\Omega}=0$, and observe that $\theta=\theta_{+}$.
Then, we define $\eta,\xi\in H_{0}^{1}(\Omega)$ by
\[
\eta=\frac{1}{2}(\theta_{+}-\theta_{-})\quad\text{and}\quad\xi=\frac{1}{2}(\theta_{+}+\theta_{-})
\]
and observe they satisfy the equivalent system of PDEs
\begin{equation}
\begin{cases}
\mathbf{u}\cdot\nabla\eta=\Delta\xi+w\\
\mathbf{u}\cdot\nabla\xi=\Delta\eta
\end{cases}.\label{eq:steadyPDEsystem}
\end{equation}
We claim the change of variables $(\theta_{+},\theta_{-})\leftrightarrow(\eta,\xi)$
yields a variational formula for $Nu$.

Testing the second equation in \prettyref{eq:steadyPDEsystem} against
$\xi$ and integrating by parts shows that $\nabla\xi\perp\nabla\eta$
in $L^{2}(\Omega)$, since
\[
\int_{\Omega}\nabla\eta\cdot\nabla\xi=-\int_{\Omega}\mathbf{u}\cdot\nabla\xi\xi=0.
\]
Therefore, 
\[
Nu-1=\fint_{\Omega}|\nabla\theta_{+}|^{2}=\fint_{\Omega}|\nabla\eta|^{2}+|\nabla\xi|^{2}
\]
or, using the first PDE in \prettyref{eq:steadyPDEsystem},
\begin{equation}
Nu-1=\fint_{\Omega}|\nabla\eta|^{2}+|\nabla\Delta^{-1}\left[\mathbf{u}\cdot\nabla\eta-w\right]|^{2}.\label{eq:Nu-non-local-formula}
\end{equation}
Consider the righthand side of \prettyref{eq:Nu-non-local-formula}
as it depends on $\eta$. Since $\mathbf{u}\in L^{2}$ and is divergence-free,
the righthand side is well-defined for $\eta\in H_{0}^{1}(\Omega)\cap L^{\infty}(\Omega)$.
Now consider the variational problem
\begin{equation}
\inf_{\eta\in H_{0}^{1}(\Omega)\cap L^{\infty}(\Omega)}\,\fint_{\Omega}|\nabla\eta|^{2}+|\nabla\Delta^{-1}\left[\mathbf{u}\cdot\nabla\eta-w\right]|^{2},\label{eq:steady_min-pblm_for_Nu}
\end{equation}
which is strictly convex so that any minimizer must be unique. By
a first variation argument, we see that $\eta$ is a minimizer of
\prettyref{eq:steady_min-pblm_for_Nu} if and only if it satisfies
the Euler-Lagrange equation
\[
\Delta\eta=\mathbf{u}\cdot\nabla\Delta^{-1}\left[\mathbf{u}\cdot\nabla\eta-w\right].
\]
This is a rewrite of the system \prettyref{eq:steadyPDEsystem} with
$\xi$ defined by 
\[
\xi=\Delta^{-1}\left[\mathbf{u}\cdot\nabla\eta-w\right].
\]
Since that system possesses solutions in the class $H_{0}^{1}\cap L^{\infty}$,
it immediately follows that \prettyref{eq:steady_min-pblm_for_Nu}
has a minimizer. It also follows that the minimal value in \prettyref{eq:steady_min-pblm_for_Nu}
is $Nu-1$. 

Relabeling $\eta$ as $\eta+1-z$ and using that $\mathbf{u}$ is
divergence-free yields the following variational principle for heat
transport:
\begin{thm}
\label{thm:steady-primal-varprin} Let $\mathbf{u}(\mathbf{x})$ be
a divergence-free vector field in $L^{2}(\Omega;\mathbb{R}^{3})$.
Then,
\[
Nu(\mathbf{u})=\min_{\substack{\eta\in H^{1}(\Omega)\cap L^{\infty}(\Omega)\\
\eta|_{z=0}=1,\,\eta|_{z=1}=0
}
}\,\fint_{\Omega}\left|\nabla\eta\right|^{2}+\left|\nabla\Delta^{-1}\emph{div}(\mathbf{u}\eta)\right|^{2}.
\]
\end{thm}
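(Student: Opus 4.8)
The bulk of the argument is already carried out in the symmetrization computation preceding the statement; the plan is to assemble it into a strictly convex minimization and then verify the minimum is attained at the expected competitor. I would start from the steady identity $Nu(\mathbf{u}) = 1 + \fint_\Omega w\theta = 1 + \fint_\Omega |\nabla\theta|^2$, where $\theta = T - (1-z) \in H_0^1(\Omega) \cap L^\infty(\Omega)$ solves $\mathbf{u}\cdot\nabla\theta = \Delta\theta + w$; the second equality follows by testing this PDE against $\theta$ and using $\text{div}\,\mathbf{u} = 0$ together with $\theta|_{\partial\Omega} = 0$. Following \eqref{eq:adjointsteadyPDEs}--\eqref{eq:steadyPDEsystem}, I then introduce the adjoint pair $\theta_\pm$ and their symmetric and antisymmetric parts $\eta = \tfrac12(\theta_+ - \theta_-)$, $\xi = \tfrac12(\theta_+ + \theta_-)$, so that $\theta = \theta_+ = \eta + \xi$ and $(\eta,\xi)$ solves the system \eqref{eq:steadyPDEsystem}.

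The first key step is the orthogonality $\int_\Omega \nabla\eta\cdot\nabla\xi = 0$: testing the second equation of \eqref{eq:steadyPDEsystem} against $\xi$ and integrating by parts, the advective term is $\int_\Omega (\mathbf{u}\cdot\nabla\xi)\xi = \tfrac12\int_\Omega \mathbf{u}\cdot\nabla(\xi^2) = 0$ since $\text{div}\,\mathbf{u} = 0$ and $\xi \in H_0^1 \cap L^\infty$. Expanding $|\nabla\theta_+|^2 = |\nabla\eta|^2 + 2\,\nabla\eta\cdot\nabla\xi + |\nabla\xi|^2$ and averaging kills the cross term, so $Nu - 1 = \fint_\Omega |\nabla\eta|^2 + |\nabla\xi|^2$; substituting $\xi = \Delta^{-1}[\mathbf{u}\cdot\nabla\eta - w]$ from the first equation of \eqref{eq:steadyPDEsystem} produces the nonlocal identity \eqref{eq:Nu-non-local-formula} for this particular $\eta$.

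The more substantive step is to upgrade the identity to a minimum. The functional $\mathcal{F}(\eta) = \fint_\Omega |\nabla\eta|^2 + |\nabla\Delta^{-1}[\mathbf{u}\cdot\nabla\eta - w]|^2$ is well-defined on $H_0^1(\Omega)\cap L^\infty(\Omega)$ since, for $\mathbf{u}\in L^2$ divergence-free and $\eta\in L^\infty$, one has $\mathbf{u}\cdot\nabla\eta - w = \text{div}(\mathbf{u}\eta) - w \in H^{-1}(\Omega)$ and $\Delta^{-1}$ maps $H^{-1}$ into $H_0^1$; it is strictly convex, being the sum of the strictly convex Dirichlet term with a convex quadratic precomposed with the affine map $\eta \mapsto \mathbf{u}\cdot\nabla\eta - w$, hence it has at most one minimizer. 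A first-variation computation against test functions $\phi$, integrating by parts in the nonlocal term and using $\text{div}\,\mathbf{u} = 0$, shows $\eta$ is critical for $\mathcal{F}$ if and only if $\Delta\eta = \mathbf{u}\cdot\nabla\Delta^{-1}[\mathbf{u}\cdot\nabla\eta - w]$, that is, if and only if $(\eta,\xi)$ with $\xi := \Delta^{-1}[\mathbf{u}\cdot\nabla\eta - w]$ solves \eqref{eq:steadyPDEsystem}; since such a solution has already been produced in $H_0^1\cap L^\infty$, it is the unique minimizer and $\min\mathcal{F} = \mathcal{F}(\eta) = Nu - 1$, which is \eqref{eq:steady_min-pblm_for_Nu}. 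Finally, replacing $\eta$ by $\eta + (1-z)$ moves the boundary data to $1$ and $0$; using $\fint_\Omega \partial_z\eta = 0$ for $\eta \in H_0^1$ one checks $\fint_\Omega |\nabla(\eta+(1-z))|^2 = \fint_\Omega|\nabla\eta|^2 + 1$, while $\mathbf{u}\cdot\nabla\eta - w = \mathbf{u}\cdot\nabla(\eta+(1-z)) = \text{div}(\mathbf{u}(\eta+(1-z)))$ because $\mathbf{u}\cdot\nabla(1-z) = -w$ and $\text{div}\,\mathbf{u}=0$; the constant picked up by the Dirichlet term is exactly what converts $\min\mathcal{F} = Nu-1$ into the stated identity for $Nu$.

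The step I expect to be the real obstacle is not any single line of algebra but the functional-analytic hygiene forced by allowing $\mathbf{u}$ merely in $L^2(\Omega;\mathbb{R}^3)$: one needs the adjoint problems \eqref{eq:adjointsteadyPDEs} to be well-posed with \emph{bounded} weak solutions (the $L^\infty$ bound coming, as for $\theta$ itself, from the maximum principle / Stampacchia truncation), and one must justify that each integration by parts above — those involving $\mathbf{u}\xi^2$, $\mathbf{u}\eta$, and the composition $\mathbf{u}\cdot\nabla\Delta^{-1}[\cdots]$ — is legitimate in the class $H_0^1\cap L^\infty$ paired against an $L^2$ vector field, including the identification $\text{div}(\mathbf{u}\eta) = \mathbf{u}\cdot\nabla\eta$ in $H^{-1}$ for divergence-free $\mathbf{u}$. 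Granting these standard facts, strict convexity closes the argument with no further input.
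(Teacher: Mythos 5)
Your proposal is correct and follows essentially the same route as the paper: the symmetrizing change of variables $(\theta_+,\theta_-)\mapsto(\eta,\xi)$, the orthogonality $\int_\Omega\nabla\eta\cdot\nabla\xi=0$ from testing the second equation of the system against $\xi$, the identification of the Euler--Lagrange equation of the strictly convex functional with that system, and the final shift $\eta\mapsto\eta+(1-z)$. The functional-analytic caveats you flag (boundedness of the adjoint solutions via the maximum principle, legitimacy of the integrations by parts for $\mathbf{u}\in L^2$ paired with $H_0^1\cap L^\infty$ test functions) are exactly the points the paper also treats as standard, so there is no gap relative to the paper's own level of rigor.
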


\subsection{Upper bounds on unsteady transport by symmetrization}

With some care, the previous argument can be adapted to the time-dependent
case. Here, the useful change of variables arises from the pair of
PDEs
\[
\pm(\partial_{t}+\mathbf{u}\cdot\nabla)\theta_{\pm}=\Delta\theta_{\pm}+w
\]
which are formally adjoint in space and time. These are in obvious
analogy with \prettyref{eq:adjointsteadyPDEs} from the steady case.
However, since parabolic PDEs are generically only well-posed forward
in time, making sense of the ``$-$'' equation presents an added
difficulty. To deal with this, we will reverse the sense of time between
the equations, performing the change of variables $t\to\tau-t$ for
appropriately chosen $\tau\gg1$. In the limit $\tau\to\infty$, we
recover the unsteady variational bound.

Define the admissible set of test functions
\begin{multline*}
\mathcal{A}=\{\eta:\eta\in L^{\infty}([0,\infty);L^{2}(\Omega))\} \\ \cap\{\eta(t)\in H^{1}(\Omega)\cap L^{\infty}(\Omega),\partial_{t}\eta(t)\in H^{-1}(\Omega)\ \text{a.e.}\ t\}.
\end{multline*}

\begin{thm}
\label{thm:primal_Nubound_w2w} Let $\mathbf{u}(\mathbf{x},t)$ be
a divergence-free vector field with bounded mean energy $\left\langle |\mathbf{u}|^{2}\right\rangle <\infty$.
Then,
\[
Nu(\mathbf{u})\leq\inf_{\substack{\eta\in\mathcal{A}\\
\eta|_{z=1}=0,\,\eta|_{z=0}=1
}
}\,\left\langle \left|\nabla\eta\right|^{2}+\left|\nabla\Delta^{-1}\left[\partial_{t}\eta+\emph{div}(\mathbf{u}\eta)\right]\right|^{2}\right\rangle .
\]
\end{thm}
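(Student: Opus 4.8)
The plan is to reduce the unsteady bound to a family of finite-time identities obtained by the same symmetrization trick as in the steady case, and then pass to the limit $\tau\to\infty$. Fix $\tau\gg 1$ and a test function $\eta\in\mathcal{A}$ with $\eta|_{z=0}=1$, $\eta|_{z=1}=0$. I would first run the advection-diffusion equation for $\theta_+ = T - (1-z)$ on the time interval $[0,\tau]$, so that $(\partial_t + \mathbf{u}\cdot\nabla)\theta_+ = \Delta\theta_+ + w$ with $\theta_+|_{\partial\Omega}=0$ and essentially bounded initial data. To build the adjoint variable $\theta_-$, perform the time reversal $t\mapsto \tau - t$: let $\tilde\theta_-$ solve the \emph{forward} well-posed problem $(\partial_t + \tilde{\mathbf{u}}\cdot\nabla)\tilde\theta_- = \Delta\tilde\theta_- + \tilde w$ on $[0,\tau]$ (with $\tilde{\mathbf{u}}(t) = -\mathbf{u}(\tau - t)$, $\tilde w(t) = w(\tau - t)$, and a chosen bounded terminal datum), then set $\theta_-(t) = \tilde\theta_-(\tau - t)$, which is the weak solution of $-(\partial_t + \mathbf{u}\cdot\nabla)\theta_- = \Delta\theta_- + w$ with $\theta_-|_{\partial\Omega}=0$. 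Setting $\eta_* = \tfrac12(\theta_+ - \theta_-)$ and $\xi = \tfrac12(\theta_+ + \theta_-)$, these satisfy, in the weak sense on $(0,\tau)\times\Omega$,
\[
\partial_t\eta_* + \mathbf{u}\cdot\nabla\eta_* = \Delta\xi + w,\qquad \partial_t\xi + \mathbf{u}\cdot\nabla\xi = \Delta\eta_*,
\]
the time-dependent analog of \prettyref{eq:steadyPDEsystem}.

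The next step is the orthogonality/energy computation. Testing the second PDE against $\xi$ and using $\mathbf{u}$ divergence-free gives $\int_\Omega \nabla\eta_*\cdot\nabla\xi = -\tfrac12\frac{d}{dt}\int_\Omega |\xi|^2$; testing the first against $\eta_*$ gives $\int_\Omega w\eta_* = \tfrac12\frac{d}{dt}\int_\Omega|\eta_*|^2 + \int_\Omega\nabla\eta_*\cdot\nabla\xi$. Relabeling $\eta_*\mapsto\eta_* + 1 - z$ (which turns $\int_\Omega w\eta_*$ into $Nu_\tau - 1$ up to the exact time-average and boundary-term bookkeeping, since $\int w(1-z) = \langle wT\rangle$-type quantities are controlled) and combining yields a pointwise-in-$t$ identity whose time average over $[0,\tau]$ reads
\[
Nu_\tau(\mathbf{u}) = \fint_0^\tau\fint_\Omega |\nabla\eta_*|^2 + |\nabla\xi|^2 + (\text{boundary/initial terms of order } 1/\tau),
\]
exactly as in the steady derivation of \prettyref{eq:Nu-non-local-formula} but with an extra $\tfrac1\tau\big[\|\eta_*\|_{L^2}^2 + \|\xi\|_{L^2}^2\big]_0^\tau$. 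Using the first PDE to write $\xi = \Delta^{-1}[\partial_t\eta_* + \mathrm{div}(\mathbf{u}\eta_*) - w]$ and then comparing $\eta_*$ (the optimal choice) with the given test function $\eta$ via strict convexity of the right-hand side in the $\eta$-variable — just as \prettyref{eq:steady_min-pblm_for_Nu} bounds the steady $Nu$ — produces
\[
Nu_\tau(\mathbf{u}) \le \fint_0^\tau\fint_\Omega |\nabla\eta|^2 + |\nabla\Delta^{-1}[\partial_t\eta + \mathrm{div}(\mathbf{u}\eta)]|^2 + \frac{C_\tau}{\tau}
\]
for every admissible $\eta$, where $C_\tau$ collects the boundary and endpoint contributions. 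Taking $\limsup_{\tau\to\infty}$ and then the infimum over $\eta\in\mathcal{A}$ gives the claimed inequality, using $Nu = \limsup_\tau Nu_\tau$ and that $\eta$ does not depend on $\tau$.

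The main obstacle is the functional-analytic care needed to justify the time-reversed adjoint construction and the integration-by-parts identities at the stated regularity: $\eta\in\mathcal{A}$ has only $\partial_t\eta\in H^{-1}$, so the pairing $\langle\partial_t\eta,\xi\rangle$ and the quantity $\Delta^{-1}[\partial_t\eta + \mathrm{div}(\mathbf{u}\eta)]$ must be interpreted via the $H^{-1}$–$H^1_0$ duality, and one needs $\mathbf{u}\eta\in L^2$ (hence the hypothesis $\langle|\mathbf{u}|^2\rangle<\infty$ together with $\eta\in L^\infty$) for $\mathrm{div}(\mathbf{u}\eta)\in H^{-1}$. The other delicate point is controlling the $1/\tau$ endpoint terms: $\|\xi(\tau)\|_{L^2}$ and $\|\eta_*(\tau)\|_{L^2}$ must be shown to grow sub-linearly in $\tau$, which follows from the maximum principle bound $0\le T\le 1$ (hence $\theta_\pm$ bounded uniformly in time) provided the terminal datum for the adjoint problem is chosen bounded; the contribution of $1-z$ after relabeling is an explicit constant. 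Once these technical points are handled, the identity-plus-convexity argument is a routine adaptation of \prettyref{thm:steady-primal-varprin}.
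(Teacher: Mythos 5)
Your proposal is correct and follows essentially the same route as the paper: symmetrization via the time-reversed adjoint problem on $[0,\tau]$ with bounded terminal data, the $O(1/\tau)$ control of the cross/endpoint terms via the maximum principle, approximate minimality of $\eta^\tau$ by convexity, and the limit $\tau\to\infty$ followed by the shift $\eta\to\eta+1-z$. The only differences are cosmetic (you test the two PDEs separately rather than expanding $|\nabla\theta_+|^2=|\nabla\eta^\tau+\nabla\xi^\tau|^2$, and you allow a general bounded terminal datum where the paper takes $\theta_-^\tau(\tau)=0$), and the technical caveats you flag are exactly the ones the paper addresses.
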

\begin{proof}
We begin by introducing the (approximately) symmetrized variables.
Let $\theta_{+}=T-(1-z)$ and note it solves
\[
\partial_{t}\theta_{+}+\mathbf{u}\cdot\nabla\theta_{+}=\Delta\theta_{+}+w
\]
on $[0,\infty)\times\Omega$ and vanishes at $\partial\Omega$. Let
$\theta_{-}^{\tau}$ be the unique essentially bounded weak solution
of
\[
-\partial_{t}\theta_{-}-\mathbf{u}\cdot\nabla\theta_{-}=\Delta\theta_{-}+w
\]
on $\Omega_{\tau}=[0,\tau]\times\Omega$ that vanishes at $\partial\Omega$
and has final time data $\theta_{-}^{\tau}(\tau)=0$. 
Next, define the symmetrized variables $\eta^{\tau}$
and $\xi^{\tau}$ by
\[
\eta^{\tau}=\frac{1}{2}\left(\theta_{+}-\theta_{-}^{\tau}\right)\quad\text{and}\quad\xi^{\tau}=\frac{1}{2}(\theta_{+}+\theta_{-}^{\tau}).
\]
Observe $\eta^{\tau}$ and $\xi^{\tau}$ solve
\begin{equation}
\begin{cases}
\partial_{t}\eta^{\tau}+\mathbf{u}\cdot\nabla\eta^{\tau}=\Delta\xi^{\tau}+w\\
\partial_{t}\xi^{\tau}+\mathbf{u}\cdot\nabla\xi^{\tau}=\Delta\eta^{\tau}
\end{cases}\label{eq:symmetrized_timedept}
\end{equation}
on $\Omega_{\tau}$, vanish at $\partial\Omega$, and remain bounded
in $L_{tx}^{\infty}$ uniformly in time. In particular, by the maximum
principle,
\[
||\eta^{\tau}||_{L_{tx}^{\infty}}\vee||\xi^{\tau}||_{L_{tx}^{\infty}}\lesssim||\theta_{+}||_{L_{tx}^{\infty}}\vee||\theta_{-}^{\tau}||_{L_{tx}^{\infty}}\leq C(T_{0}).
\]

We proceed as in the steady case, accumulating errors that vanish
as $\tau\to\infty$. From the second PDE in \prettyref{eq:symmetrized_timedept}
we find that
\begin{align*}
\left\langle \nabla\eta^{\tau}\cdot\nabla\xi^{\tau}\right\rangle _{\tau} & =\fint_{0}^{\tau}\frac{1}{|\Omega|}\left(-\Delta\eta^{\tau},\xi^{\tau}\right)\,dt=\fint_{0}^{\tau}\frac{1}{|\Omega|}\left(-\partial_{t}\xi^{\tau}-\text{div}(\mathbf{u}\xi^{\tau}),\xi^{\tau}\right)\,dt\\
 & =-\fint_{0}^{\tau}\frac{1}{|\Omega|}\frac{d}{dt}\frac{1}{2}||\xi^{\tau}||_{L^{2}(\Omega)}^{2}+\fint_{0}^{\tau}\fint_{\Omega}\mathbf{u}\xi^{\tau}\cdot\nabla\xi^{\tau} \\
 & =\frac{1}{2\tau}\frac{1}{|\Omega|}(||\xi^{\tau}(0)||_{L^{2}(\Omega)}^{2}-||\xi^{\tau}(\tau)||_{L^{2}(\Omega)}^{2}).
\end{align*}
Therefore,
\[
\left|\left\langle \nabla\eta^{\tau}\cdot\nabla\xi^{\tau}\right\rangle _{\tau}\right|\leq C(T_{0})\frac{1}{\tau}.
\]
Since 
\[
Nu-1=\left\langle |\nabla\theta|^{2}\right\rangle =\left\langle |\nabla\theta_{+}|^{2}\right\rangle 
\]
and
\[
\left\langle |\nabla\theta_{+}|^{2}\right\rangle _{\tau}=\left\langle |\nabla\eta^{\tau}|^{2}+\left|\nabla\xi^{\tau}\right|^{2}\right\rangle _{\tau}+2\left\langle \nabla\eta^{\tau}\cdot\nabla\xi^{\tau}\right\rangle _{\tau},
\]
we conclude that
\begin{equation}
Nu-1=\left\langle |\nabla\eta^{\tau}|^{2}+\left|\nabla\Delta^{-1}\left[\partial_{t}\eta^{\tau}+\mathbf{u}\cdot\nabla\eta^{\tau}-w\right]\right|^{2}\right\rangle _{\tau}+O(\frac{1}{\tau}).\label{eq:Nu_unsteady_uptoerror}
\end{equation}

Next, we prove that $\eta^{\tau}$ is approximately minimal, with
an error that vanishes as $\tau\to\infty$. Let $\eta\in\mathcal{A}$
vanish at $\partial\Omega$ but be otherwise arbitrary, and consider
the difference
\[
A_{\tau}=\left\langle |\nabla\eta|^{2}+\left|\nabla\Delta^{-1}\left[\partial_{t}\eta+\mathbf{u}\cdot\nabla\eta-w\right]\right|^{2}\right\rangle _{\tau} -\left\langle |\nabla\eta^{\tau}|^{2}+\left|\nabla\Delta^{-1}\left[\partial_{t}\eta^{\tau}+\mathbf{u}\cdot\nabla\eta^{\tau}-w\right]\right|^{2}\right\rangle _{\tau}.
\]
Using the convexity of $|\cdot|^{2}$, we can expand around $\eta^{\tau}$
and use \prettyref{eq:symmetrized_timedept} to arrive at the lower
bound
\begin{align*}
\frac{A_{\tau}}{2} & \geq\left\langle \nabla(\eta-\eta^{\tau})\cdot\nabla\eta^{\tau}+\nabla\Delta^{-1}(\partial_{t}+\mathbf{u}\cdot\nabla)(\eta-\eta^{\tau})\cdot\nabla\Delta^{-1}\left[(\partial_{t}+\mathbf{u}\cdot\nabla)\eta^{\tau}-w\right]\right\rangle _{\tau}\\
 & =-\frac{1}{|\Omega|}\fint_{0}^{\tau}\left((\partial_{t}+\mathbf{u}\cdot\nabla)\xi^{\tau},\eta-\eta^{\tau}\right)+\left((\partial_{t}+\mathbf{u}\cdot\nabla)(\eta-\eta^{\tau}),\xi^{\tau}\right)\,dt \\ 
 & =-\frac{1}{|\Omega|}\fint_{0}^{\tau}\frac{d}{dt}\left[\int_{\Omega}(\eta-\eta^{\tau})\xi^{\tau}\right]\,dt\\
 & \gtrsim-\frac{1}{\tau}\frac{1}{|\Omega|}(||\eta||_{L_{t}^{\infty}L_{\mathbf{x}}^{2}}\vee||\eta^{\tau}||_{L_{t}^{\infty}L_{\mathbf{x}}^{2}})\cdot||\xi^{\tau}||_{L_{t}^{\infty}L_{\mathbf{x}}^{2}}\geq-C(T_{0},\eta,\Omega)\frac{1}{\tau}.
\end{align*}
Combining this with \prettyref{eq:Nu_unsteady_uptoerror}, we find
that
\[
Nu-1\leq\left\langle |\nabla\eta|^{2}+\left|\nabla\Delta^{-1}\left[\partial_{t}\eta+\mathbf{u}\cdot\nabla\eta-w\right]\right|^{2}\right\rangle _{\tau}+C(T_{0},\eta,\Omega)\frac{1}{\tau}.
\]
Taking $\tau\to\infty$ yields the inequality
\[
Nu-1\leq\left\langle |\nabla\eta|^{2}+\left|\nabla\Delta^{-1}\left[\partial_{t}\eta+\mathbf{u}\cdot\nabla\eta-w\right]\right|^{2}\right\rangle .
\]
This holds for all $\eta\in\mathcal{A}$ that vanish at $\partial\Omega$.
Changing variables by $\eta\to\eta+1-z$ and optimizing yields the
result.
\end{proof}
Even if $\mathbf{u}$ depends on time, $\eta$ can be taken to be
independent of time and still used to bound $Nu$. The simplified
version of \prettyref{thm:primal_Nubound_w2w} that results is analogous
to \prettyref{thm:steady-primal-varprin}, but for unsteady heat transport.
\begin{cor}
\label{cor:symmsteadybound_w2w} Let $\mathbf{u}(\mathbf{x},t)$ be
a divergence-free vector field with bounded mean energy $\left\langle |\mathbf{u}|^{2}\right\rangle <\infty$.
Then,
\begin{equation}
Nu(\mathbf{u})\leq\inf_{\substack{\eta\in H^{1}(\Omega)\cap L^{\infty}(\Omega)\\
\eta|_{z=1}=0,\,\eta|_{z=0}=1
}
}\,\fint_{\Omega}\left|\nabla\eta\right|^{2}+\left\langle \left|\nabla\Delta^{-1}\emph{div}(\mathbf{u}\eta)\right|^{2}\right\rangle .\label{eq:symmsteadybound_w2w}
\end{equation}
\end{cor}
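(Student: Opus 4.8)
The plan is to derive \prettyref{cor:symmsteadybound_w2w} as an immediate specialization of \prettyref{thm:primal_Nubound_w2w}, obtained by restricting the infimum appearing there to test functions that do not depend on time. First I would check that such functions are indeed admissible. Given $\eta\in H^{1}(\Omega)\cap L^{\infty}(\Omega)$ with $\eta|_{z=0}=1$ and $\eta|_{z=1}=0$, regard $\eta$ as a function on $[0,\infty)\times\Omega$ constant in $t$. Then $\eta\in L^{\infty}([0,\infty);L^{2}(\Omega))$ since $\eta\in L^{2}(\Omega)$, $\eta(t)=\eta\in H^{1}(\Omega)\cap L^{\infty}(\Omega)$ for every $t$, and $\partial_{t}\eta(t)=0\in H^{-1}(\Omega)$ for every $t$; hence $\eta\in\mathcal{A}$ and it satisfies the boundary conditions required in \prettyref{thm:primal_Nubound_w2w}.

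Next I would evaluate the functional on the right-hand side of \prettyref{thm:primal_Nubound_w2w} at such an $\eta$. Because $\partial_{t}\eta=0$ and $\mathbf{u}$ is divergence-free, the nonlocal term becomes $|\nabla\Delta^{-1}\mathrm{div}(\mathbf{u}\eta)|^{2}$. Moreover $|\nabla\eta|^{2}$ is independent of $t$, so $\fint_{0}^{\tau}\fint_{\Omega}|\nabla\eta|^{2}=\fint_{\Omega}|\nabla\eta|^{2}$ is a $\tau$-independent constant; since $\langle f\rangle=\limsup_{\tau\to\infty}\fint_{0}^{\tau}\fint_{\Omega}f$, this gives $\langle|\nabla\eta|^{2}+|\nabla\Delta^{-1}\mathrm{div}(\mathbf{u}\eta)|^{2}\rangle=\fint_{\Omega}|\nabla\eta|^{2}+\langle|\nabla\Delta^{-1}\mathrm{div}(\mathbf{u}\eta)|^{2}\rangle$. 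Taking the infimum over all time-independent admissible $\eta$ with the prescribed boundary conditions, and using that this infimum over a subclass of $\mathcal{A}$ is at least the infimum over all of $\mathcal{A}$ in \prettyref{thm:primal_Nubound_w2w}, yields exactly the bound \prettyref{eq:symmsteadybound_w2w}.

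There is essentially no obstacle here: the corollary follows by inclusion of admissible classes, with no analysis beyond \prettyref{thm:primal_Nubound_w2w}. The only points meriting a word of care are the (routine) verification that a function constant in time genuinely lies in $\mathcal{A}$, so that \prettyref{thm:primal_Nubound_w2w} applies to it, and the elementary observation that $\langle\cdot\rangle$ reduces to a spatial average on functions constant in time, which is what lets us split off the $\fint_{\Omega}|\nabla\eta|^{2}$ term from inside the bracket.
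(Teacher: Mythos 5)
Your proposal is correct and is exactly the route the paper intends: the corollary is stated as an immediate specialization of \prettyref{thm:primal_Nubound_w2w} to time-independent test functions, and the paper gives no further argument. Your verification that constant-in-time $\eta$ lies in $\mathcal{A}$ and that $\left\langle \cdot\right\rangle $ splits off the $\tau$-independent term $\fint_{\Omega}|\nabla\eta|^{2}$ supplies precisely the (routine) details the paper leaves implicit.
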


\subsection{Proof of \prettyref{prop:ensbound_w2w}}

We prove \prettyref{prop:ensbound_w2w} by deducing it from \prettyref{cor:symmsteadybound_w2w}
with a good choice of test function $\eta$. Evidently, we must find
a convenient way to bound the non-local term appearing there. Since
$\nabla\Delta^{-1}\text{div}$ is an $L^{2}$-orthogonal projection,
one has the bound
\[
\left\langle \left|\nabla\Delta^{-1}\text{div}(\mathbf{u}\eta)\right|^{2}\right\rangle \leq\left\langle \left|\mathbf{u}(\eta-c)\right|^{2}\right\rangle 
\]
for an arbitrary constant $c$. In the case that $\mathbf{u}$ satisfies
no-slip boundary conditions, one can deduce \prettyref{prop:ensbound_w2w}
by choosing $\eta\approx c$ thereby localizing the righthand side
to a small neighborhood of $\partial\Omega$. Then, a straightforward
application of Poincar\'e's inequality yields the result.

The final step in the preceding argument requires all components of
$\mathbf{u}$ to vanish at $\partial\Omega$. This is not useful for
dealing with no-penetration boundary conditions. Nevertheless, \prettyref{prop:ensbound_w2w}
holds in this more general case. The key is to approach the non-local
term from \prettyref{cor:symmsteadybound_w2w} by duality. Observe
that
\[
\int_{\Omega}|\nabla\Delta^{-1}\text{div}\,\mathbf{m}|^{2}=\sup_{\theta\in H_{0}^{1}(\Omega)}\,\int_{\Omega}2\nabla\theta\cdot\mathbf{m}-|\nabla\theta|^{2}
\]
whenever $\mathbf{m}\in L^{2}(\Omega)$. Thus, the inequality
\[
\int_{\Omega}|\nabla\Delta^{-1}\text{div}\,\mathbf{m}|^{2}\leq C
\]
and the statement that
\[
\int_{\Omega}2\nabla\theta\cdot\mathbf{m}\leq C+\int_{\Omega}|\nabla\mathbf{\theta}|^{2}\quad\forall\,\theta\in H_{0}^{1}
\]
are one and the same. Taking $\mathbf{m}=\mathbf{u}\eta$ where $\eta$
depends only on $z$, we conclude it will be useful to have bounds
of the form
\[
\int_{\Omega}2\overline{w\theta}\eta'(z)\leq C+\int_{\Omega}|\nabla\mathbf{\theta}|^{2}\quad\forall\,\theta\in H_{0}^{1}.
\]
The following preliminary result allows us to establish bounds of
this type.
\begin{lem}
\label{lem:lingrowth_wtheta}Let $w,\theta\in H_{0}^{1}(\Omega)$.
Then,
\[
\left|\overline{w\theta}(z)\right|\lesssim\frac{|z\wedge(1-z)|}{|\mathbb{T}_{xy}^{2}|}||\partial_{z}\theta||_{L^{2}(\Omega)}||\partial_{z}w||_{L^{2}(\Omega)}\quad\text{a.e.}
\]
 
\end{lem}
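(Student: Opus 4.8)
The plan is to reduce the claimed estimate on the horizontal average $\overline{w\theta}(z)$ to a one-dimensional computation on vertical slices, using the fundamental theorem of calculus together with two applications of the Cauchy--Schwarz inequality. First I would record the structural fact that, since $w,\theta\in H_0^1(\Omega)$, for a.e.\ $(x,y)\in\mathbb{T}_{xy}^2$ the vertical slices $z\mapsto w(x,y,z)$ and $z\mapsto\theta(x,y,z)$ belong to $H^1((0,1))$ and vanish at $z=0$ and $z=1$. This is the only point deserving care: it follows from Fubini's theorem (which gives $\int_0^1|\partial_z w(x,y,z)|^2\,dz<\infty$ for a.e.\ $(x,y)$) combined with the density of $C_c^\infty(\Omega)$ in $H_0^1(\Omega)$ and the trace estimate, so that the zero boundary values on $\mathbb{T}_{xy}^2\times\{0,1\}$ descend to vanishing endpoint values of a.e.\ slice. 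In particular each such slice is absolutely continuous on $[0,1]$ with $w(x,y,0)=\theta(x,y,0)=0$.

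Granting this, for a.e.\ $(x,y)$ and every $z\in[0,1]$ I would write
\[
|w(x,y,z)|=\left|\int_0^z\partial_z w(x,y,z')\,dz'\right|\leq z^{1/2}\left(\int_0^1|\partial_z w(x,y,z')|^2\,dz'\right)^{1/2},
\]
and likewise for $\theta$. Multiplying the two bounds, integrating over $\mathbb{T}_{xy}^2$, and then applying Cauchy--Schwarz a second time in the $(x,y)$ variables gives
\[
\left|\overline{w\theta}(z)\right|\leq\frac{z}{|\mathbb{T}_{xy}^2|}\left(\int_{\mathbb{T}_{xy}^2}\int_0^1|\partial_z w|^2\right)^{1/2}\left(\int_{\mathbb{T}_{xy}^2}\int_0^1|\partial_z\theta|^2\right)^{1/2}=\frac{z}{|\mathbb{T}_{xy}^2|}\,||\partial_z w||_{L^2(\Omega)}||\partial_z\theta||_{L^2(\Omega)}.
\]
Running the same argument with the integrals taken from $z$ up to $1$ (using the vanishing traces at $z=1$) produces the identical bound with $1-z$ in place of $z$; retaining whichever of the two is smaller yields the factor $z\wedge(1-z)$ and completes the proof, in fact with implied constant $1$.

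There is essentially no obstacle here beyond the slice-regularity remark flagged above; everything else is routine. I would keep the write-up short, stating the Fubini/density justification in a sentence and then displaying the Cauchy--Schwarz chain once.
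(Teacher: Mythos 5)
Your proof is correct, and it is a mildly streamlined version of the paper's argument rather than a different one in substance: both rest on the same idea that the vanishing traces at $z=0$ and $z=1$ force square-root growth of each factor away from the walls, which upon multiplication gives the linear factor $z\wedge(1-z)$. The implementations differ in where the Cauchy--Schwarz inequalities are applied. The paper works at the level of horizontally averaged quantities: it derives the differential inequalities $|\tfrac{d}{dz}\overline{w^{2}}|\le\tfrac{2}{|\mathbb{T}_{xy}^{2}|}\|\partial_{z}w\|_{L^{2}_{xy}}\|w\|_{L^{2}_{xy}}$, integrates to get $\|w\|_{L^{\infty}([0,z];L^{2}_{xy})}\lesssim |z|^{1/2}\|\partial_{z}w\|_{L^{2}(\Omega)}$ (and likewise for $\theta$), and then runs the analogous argument for $\tfrac{d}{dz}\overline{w\theta}$ using the product rule. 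You instead apply the fundamental theorem of calculus pointwise on a.e.\ vertical slice, multiply, and only then integrate in $(x,y)$ with a second Cauchy--Schwarz. Your route is shorter, avoids the intermediate $L^{\infty}_{z}L^{2}_{xy}$ bounds, and yields the sharper constant $1$ where the paper's bookkeeping produces a $4$; the paper's route has the minor advantage that the intermediate estimates \prettyref{eq:rootgrowth_w}--\prettyref{eq:rootgrowth_theta} are stated in a form reused verbatim in the background-method literature. Your justification of the slice regularity (Fubini plus density of $C_{c}^{\infty}(\Omega)$ in $H_{0}^{1}(\Omega)$, with $H^{1}(0,1)\hookrightarrow C([0,1])$ passing the zero endpoint values to the limit along a.e.\ slice) is the right thing to say; the paper sidesteps it by approximating with smooth compactly supported functions from the outset, which would also be acceptable in your write-up.
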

\begin{rem}
The reader familiar with the background method may recognize that
this inequality also plays a key role in carrying out that approach
to \emph{a priori} bounds. In particular, it is useful for verifying
the spectral constraint. See \prettyref{sec:onbackgroundmethod} for
more on the connection between the symmetrization method and the background
method.
\end{rem}
\begin{proof}
By the usual approximation arguments, we can take $\mathbf{u}$ and
$\theta$ to be smooth. Differentiating and applying the Cauchy-Schwarz
inequality, we find that
\[
\left|\frac{d}{dz}\overline{w^{2}}\right|\leq\frac{2}{|\mathbb{T}_{xy}^{2}|}||\partial_{z}w||_{L^{2}(\mathbb{T}_{xy}^{2})}||w||_{L^{2}(\mathbb{T}_{xy}^{2})}.
\]
Integrating this from $0$ to $z$ yields 
\begin{align*}
||w||_{L^{\infty}([0,z];L_{xy}^{2})}^{2} & \leq\frac{2}{|\mathbb{T}_{xy}^{2}|}\int_{0}^{z}||\partial_{z}w(z')||_{L_{xy}^{2}}||w(z')||_{L_{xy}^{2}}\,dz' \\
& \leq\frac{2}{|\mathbb{T}_{xy}^{2}|}||\partial_{z}w||_{L^{2}(\Omega)}||w||_{L^{2}([0,z];L_{xy}^{2})}\\
 & \leq\frac{2}{|\mathbb{T}_{xy}^{2}|}||\partial_{z}w||_{L^{2}(\Omega)}||w||_{L^{\infty}([0,z];L_{xy}^{2})}|z|^{1/2}.
\end{align*}
Therefore,
\begin{equation}
||w||_{L^{\infty}([0,z];L_{xy}^{2})}\leq\frac{2}{|\mathbb{T}_{xy}^{2}|}|z|^{1/2}||\partial_{z}w||_{L^{2}(\Omega)}.\label{eq:rootgrowth_w}
\end{equation}
Similarly, we find that
\begin{equation}
||\theta||_{L^{\infty}([0,z];L_{xy}^{2})}\leq\frac{2}{|\mathbb{T}_{xy}^{2}|}|z|^{1/2}||\partial_{z}\theta||_{L^{2}(\Omega)}.\label{eq:rootgrowth_theta}
\end{equation}

Now consider the product $w\theta$. We have that
\[
|\frac{d}{dz}\overline{w\theta}|\leq|\overline{\partial_{z}w\theta}|+|\overline{w\partial_{z}\theta}|\leq\frac{1}{|\mathbb{T}_{xy}^{2}|}\left(||\partial_{z}w||_{L_{xy}^{2}}||\theta||_{L_{xy}^{2}}+||w||_{L_{xy}^{2}}||\partial_{z}\theta||_{L_{xy}^{2}}\right)
\]
for all $z$. Therefore,
\begin{align*}
||\overline{w\theta}||_{L^{\infty}([0,z])} & \leq\frac{1}{|\mathbb{T}_{xy}^{2}|}\int_{0}^{z}||\partial_{z}w(z')||_{L_{xy}^{2}}||\theta(z')||_{L_{xy}^{2}}+||w(z')||_{L_{xy}^{2}}||\partial_{z}\theta(z')||_{L_{xy}^{2}}\,dz'\\
 & \leq\frac{1}{|\mathbb{T}_{xy}^{2}|}\left(||\partial_{z}w||_{L^{2}(\Omega)}||\theta||_{L^{2}([0,z];L_{xy}^{2})}+||\partial_{z}\theta||_{L^{2}(\Omega)}||w||_{L^{2}([0,z];L_{xy}^{2})}\right)\\
 & \leq\frac{|z|^{1/2}}{|\mathbb{T}_{xy}^{2}|}\left(||\partial_{z}w||_{L^{2}(\Omega)}||\theta||_{L^{\infty}([0,z];L_{xy}^{2})}+||\partial_{z}\theta||_{L^{2}(\Omega)}||w||_{L^{\infty}([0,z];L_{xy}^{2})}\right).
\end{align*}
Applying \prettyref{eq:rootgrowth_w} and \prettyref{eq:rootgrowth_theta}
we find that 
\[
||\overline{w\theta}||_{L^{\infty}([0,z])}\leq\frac{4|z|}{|\mathbb{T}_{xy}^{2}|}||\partial_{z}w||_{L^{2}(\Omega)}||\partial_{z}\theta||_{L^{2}(\Omega)}.
\]
The argument above is symmetric under $z\to1-z$, so we immediately
obtain the inequality
\[
||\overline{w\theta}||_{L^{\infty}([1-z,1])}\leq\frac{4|1-z|}{|\mathbb{T}_{xy}^{2}|}||\partial_{z}w||_{L^{2}(\Omega)}||\partial_{z}\theta||_{L^{2}(\Omega)}.
\]
These two combine to prove the result.
\end{proof}
Now we are ready to prove \prettyref{prop:ensbound_w2w}, which immediately
implies the upper bound part of \prettyref{thm:mainbounds_w2w_ens}.
We follow the plan laid out above.

\begin{proof}[Proof of \prettyref{prop:ensbound_w2w}]

We apply \prettyref{cor:symmsteadybound_w2w} with an appropriate
class of test functions $\{\eta_{\delta}\}$. Given $\delta\in(0,1/2]$,
we define $\eta_{\delta}$ by
\begin{equation}
\eta_{\delta}(z)=\begin{cases}
1-\frac{1}{2\delta}z & 0\leq z\leq\delta\\
\frac{1}{2} & \delta\leq z\leq1-\delta\\
\frac{1}{2\delta}(1-z) & 1-\delta\leq z\leq1
\end{cases}.\label{eq:testfunctions}
\end{equation}
Note these are admissible in \prettyref{eq:symmsteadybound_w2w}.
Thus, 
\begin{equation}
Nu(\mathbf{u})\leq\inf_{\delta\in(0,\frac{1}{2}]}\left\{ \fint_{\Omega}\left|\nabla\eta_{\delta}\right|^{2}+\left\langle \left|\nabla\Delta^{-1}\text{div}(\mathbf{u}\eta_{\delta})\right|^{2}\right\rangle \right\} .\label{eq:intermediate_Nu_fromabove_0}
\end{equation}
The first integral appearing above is simple to estimate and it satisfies
\[
\fint_{\Omega}\left|\nabla\eta_{\delta}\right|^{2}\sim\frac{1}{\delta}.
\]
So,
\begin{equation}
Nu\lesssim\inf_{\delta\in(0,\frac{1}{2}]}\left\{ \frac{1}{\delta}+\left\langle \left|\nabla\Delta^{-1}\text{div}(\mathbf{u}\eta_{\delta})\right|^{2}\right\rangle \right\} .\label{eq:intermediate_Nu_fromabove}
\end{equation}

Now we must estimate the non-local term. We claim that
\begin{equation}
\left\langle \left|\nabla\Delta^{-1}\text{div}(\mathbf{u}\eta_{\delta})\right|^{2}\right\rangle \lesssim\delta^{2}\left\langle |\nabla w|^{2}\right\rangle \label{eq:claim_for_nopenetration}
\end{equation}
for all $\delta\in(0,\frac{1}{2}]$. We argue at a.e.\ time $t$.
By duality, 
\[
\int_{\Omega}|\nabla\Delta^{-1}\text{div}(\mathbf{u}\eta_{\delta})|^{2}=\sup_{\theta\in H_{0}^{1}(\Omega)}\,2\left(\text{div}(\mathbf{u}\eta_{\delta}),\theta\right)-\int_{\Omega}|\nabla\theta|^{2}.
\]
Since  $\eta_{\delta}$ only depends on $z$, $\text{div}(\mathbf{u}\eta_{\delta})=w\eta_{\delta}'$
and we have by Fubini that
\[
\fint_{\Omega}|\nabla\Delta^{-1}\text{div}(\mathbf{u}\eta_{\delta})|^{2}=\sup_{\theta\in H_{0}^{1}(\Omega)}\,\fint_{I_{z}}2\overline{\theta w}\eta_{\delta}'-\fint_{\Omega}|\nabla\theta|^{2}.
\]
Therefore to show \prettyref{eq:claim_for_nopenetration} it suffices
to prove the inequality
\[
\fint_{I_{z}}2\overline{\theta w}\eta_{\delta}'\leq C\delta^{2}\fint_{\Omega}|\nabla w|^{2}+\fint_{\Omega}|\nabla\theta|^{2}\quad\forall\,\mathbf{u}\in H_{0}^{1}(\Omega;\mathbb{R}^{d}),\theta\in H_{0}^{1}(\Omega)
\]
Recalling the formula for $\eta_{\delta}$ from \prettyref{eq:testfunctions},
we see we must prove that
\[
\fint_{0}^{\delta}|\overline{\theta w}(z)|\,dz\lesssim\delta\left(\fint_{\Omega}|\nabla w|^{2}\right)^{1/2}\left(\fint_{\Omega}|\nabla\theta|^{2}\right)^{1/2}\quad\forall\,\mathbf{u}\in H_{0}^{1}(\Omega;\mathbb{R}^{d}),\theta\in H_{0}^{1}(\Omega).
\]
Applying \prettyref{lem:lingrowth_wtheta} proves this result and
hence the desired estimate \prettyref{eq:claim_for_nopenetration}. 

Assembling \prettyref{eq:intermediate_Nu_fromabove} and \prettyref{eq:claim_for_nopenetration},
we conclude that 
\[
Nu\lesssim\inf_{\delta\in(0,\frac{1}{2}]}\left\{ \frac{1}{\delta}+\delta^{2}\left\langle |\nabla w|^{2}\right\rangle \right\} .
\]
If $\left\langle |\nabla w|^{2}\right\rangle \geq1$ we may choose
$\delta\sim\left\langle |\nabla w|^{2}\right\rangle ^{-1/3}$ to conclude
that

\begin{equation}
Nu\lesssim\left\langle |\nabla w|^{2}\right\rangle ^{1/3}.\label{eq:Nu-bd-1}
\end{equation}
To handle the case $\left\langle |\nabla w|^{2}\right\rangle \leq1$
we treat the choice $\delta=\frac{1}{2}$ more carefully in the above.
Since $\eta_{\frac{1}{2}}=1-z$, \prettyref{eq:intermediate_Nu_fromabove_0}
and \prettyref{eq:claim_for_nopenetration} combine to prove that
\begin{equation}
Nu\leq1+\left\langle \left|\nabla\Delta^{-1}\text{div}(\mathbf{u}\eta_{\frac{1}{2}})\right|^{2}\right\rangle \leq1+C\left\langle |\nabla w|^{2}\right\rangle .\label{eq:Nu-bd-2}
\end{equation}
From \prettyref{eq:Nu-bd-1} and \prettyref{eq:Nu-bd-2} we conclude
the result.

\end{proof}

\section{Optimal design of steady wall-to-wall transport \label{sec:OptlDesignProb_w2w}}

This section begins the proof of the lower bounds from \prettyref{thm:mainbounds_w2w_eng}
and \prettyref{thm:mainbounds_w2w_ens}. While every admissible velocity
field yields a lower bound on the maximal rate of heat transport,
it is not at all clear what sorts of features are required for velocity
fields to achieve maximal (or nearly maximal) transport. It is natural
to wonder how the overall character of optimal designs depends on
the intensity budget. One possibility is captured by the convection
rolls pictured in \prettyref{fig:rollsrough}. This design is a relatively
simple one, as the number of lengthscales required to describe it
remains independent of the intensity budget. A second, much more complicated
possibility is captured by the branching designs from \prettyref{fig:branchingrough}.
There, the total number of lengthscales is allowed to depend on the
intensity budget and can be unbounded as $Pe\to\infty$. In any case,
one requires a general method by which to evaluate $Nu$ to allow
for comparison between candidate designs. 

The best scenario would be to develop an ansatz-free approach to evaluating
heat transport that, by its functional form, suggests optimal designs.
In this section, we achieve this for a general class of steady (i.e.,
time-independent) wall-to-wall optimal transport problems, including
the energy- and enstrophy-constrained ones as special cases. The class
of problems we have in mind are of the form
\begin{equation}
\sup_{\substack{\mathbf{u}(\mathbf{x})\\
||\mathbf{u}||=Pe\\
+\text{b.c.}
}
}\,Nu(\mathbf{u})\label{eq:optimalw2w_generalintensity-1}
\end{equation}
where $||\cdot||$ denotes any norm in which the advective intensity
of $\mathbf{u}$ may be measured. As described in the introduction,
the steady energy-constrained problem arises from employing the (volume-averaged)
$L^{2}$-norm
\[
||\mathbf{u}||=\left(\fint_{\Omega}|\mathbf{u}|^{2}\right)^{1/2}
\]
to measure advective intensity, while the steady enstrophy-constrained
one arises from the (volume-averaged) $\dot{H}^{1}$-norm
\[
||\mathbf{u}||=\left(\fint_{\Omega}|\nabla\mathbf{u}|^{2}\right)^{1/2}
\]
In any case, we require that $\mathbf{u}\in L^{2}$ in order that
its heat transport be well-defined. As our aim in this section is
to present a general approach to intensity-constrained optimal transport,
we leave the boundary conditions unspecified. Of course, we do not
claim that there exist optimizers at this level of generality.

The principal result of this section is that the general wall-to-wall
optimal transport problem \prettyref{eq:optimalw2w_generalintensity-1}
can be reformulated as the double minimization 
\begin{equation}
\inf_{\substack{\mathbf{u}(\mathbf{x}),\xi(\mathbf{x})\\
\fint_{\Omega}w\xi=1\\
+\text{b.c.}
}
}\,\fint_{\Omega}\left|\nabla\Delta^{-1}\text{div}(\mathbf{u}\xi)\right|^{2}+\frac{1}{Pe^{2}}||\mathbf{u}||^{2}\cdot\fint_{\Omega}|\nabla\xi|^{2}\label{eq:optimaldesgnpblm_general-1}
\end{equation}
in the velocity field $\mathbf{u}$ and a new variable $\xi$. The
boundary conditions for $\mathbf{u}$ remain the same as for \prettyref{eq:optimalw2w_generalintensity-1},
while $\xi$ is required to vanish at $\partial\Omega$. As will become
clear, $\xi$ plays a role in the analysis of $Nu$ similar to that
of $\eta$ from the \emph{a priori }bounds of \prettyref{sec:aprioribds} \textemdash in
fact, these variables are dual. The optimal values in \prettyref{eq:optimalw2w_generalintensity-1}
and \prettyref{eq:optimaldesgnpblm_general-1} are reciprocals, and
their optimizers are related through a certain change of variables.
We refer the reader forward to \prettyref{sec:energy-constraineddesign}
and \prettyref{sec:enstrophy_constrained_design} for the application
of these observations to energy- and enstrophy-constrained optimal
transport. Presently, our goal is to establish the connection between
\prettyref{eq:optimalw2w_generalintensity-1} and \prettyref{eq:optimaldesgnpblm_general-1},
and to illustrate how the latter suggests optimal designs. As in \prettyref{sec:aprioribds},
our approach centers on the existence of a variational principle for
$Nu(\mathbf{u})$; it is dual to the one appearing there. After achieving
this duality and using it to obtain \prettyref{eq:optimaldesgnpblm_general-1},
we proceed to make some general remarks on the construction of near
optimal designs. 

\subsection{Dual variational formulations for transport}

Recall from the analysis of \emph{a priori} bounds on transport in
\prettyref{sec:aprioribds} that there is a variational principle
for heat transport in the steady case, and that $Nu$ can be written
as the optimal value of a certain convex minimization problem:
\[
Nu(\mathbf{u})-1=\min_{\eta|_{\partial\Omega}=0}\,\fint_{\Omega}\left|\nabla\eta\right|^{2}+\left|\nabla\Delta^{-1}\text{div}(\mathbf{u}\eta)\right|^{2}.
\]
For the precise statement, see \prettyref{thm:steady-primal-varprin}.
As this is convex it should, in principle, admit a dual formulation. 
\begin{thm}
\label{thm:dual_Nubound_w2w} Let $\mathbf{u}$ be a divergence-free
vector field in $L^{2}(\Omega;\mathbb{R}^{3})$. Then,
\begin{equation}
Nu(\mathbf{u})-1=\max_{\xi\in H_{0}^{1}(\Omega)\cap L^{\infty}(\Omega)}\,\fint_{\Omega}2w\xi-\left|\nabla\xi\right|^{2}-\left|\nabla\Delta^{-1}\emph{div}(\mathbf{u}\xi)\right|^{2}.\label{eq:dual_steady_w2w}
\end{equation}
\end{thm}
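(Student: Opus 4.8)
The plan is to read \eqref{eq:dual_steady_w2w} as the Fenchel dual of the strictly convex minimization in \prettyref{thm:steady-primal-varprin}, and then to avoid any abstract minimax theorem by exhibiting the maximizer in closed form from the symmetrized variables already at hand in \prettyref{sec:aprioribds}. Substituting $\eta=\eta_0+(1-z)$ with $\eta_0\in H_0^1(\Omega)\cap L^\infty(\Omega)$, \prettyref{thm:steady-primal-varprin} reads
\[
Nu(\mathbf{u})-1=\min_{\eta_0\in H_0^1(\Omega)\cap L^\infty(\Omega)}\ \fint_\Omega|\nabla\eta_0|^2+\bigl|\nabla\Delta^{-1}[\text{div}(\mathbf{u}\eta_0)-w]\bigr|^2,
\]
and since $\nabla\Delta^{-1}\text{div}$ is the $L^2$-orthogonal projection onto gradient fields one has the elementary identity $\fint_\Omega|\nabla\Delta^{-1}g|^2=\sup_{\xi\in H_0^1(\Omega)}\{-\tfrac{2}{|\Omega|}(g,\xi)-\fint_\Omega|\nabla\xi|^2\}$ for $g\in H^{-1}(\Omega)$, attained at $\xi=\Delta^{-1}g$. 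Inserting this with $g=\text{div}(\mathbf{u}\eta_0)-w$ and integrating by parts using $\text{div}\,\mathbf{u}=0$ turns the primal objective into the convex--concave saddle functional
\[
L(\eta_0,\xi)=\fint_\Omega|\nabla\eta_0|^2+2\fint_\Omega\mathbf{u}\eta_0\cdot\nabla\xi+2\fint_\Omega w\xi-\fint_\Omega|\nabla\xi|^2.
\]

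For weak duality, the above shows that the primal objective at any admissible $\eta_0$ dominates $L(\eta_0,\xi)$ for every $\xi\in H_0^1(\Omega)$. Minimizing the $\eta_0$-dependent part of $L$ over all of $H_0^1(\Omega)$ is a one-line completion of the square with minimizer $\eta_0=\Delta^{-1}\text{div}(\mathbf{u}\xi)$ and value exactly the right-hand side of \eqref{eq:dual_steady_w2w}, which I denote $G(\xi)$. Hence the primal objective at $\eta_0$ is $\ge G(\xi)$; taking the infimum over $\eta_0$ and invoking \prettyref{thm:steady-primal-varprin} gives $Nu(\mathbf{u})-1\ge G(\xi)$ for every $\xi\in H_0^1(\Omega)\cap L^\infty(\Omega)$.

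For the reverse inequality I would check equality at the symmetrized variable $\xi=\tfrac12(\theta_++\theta_-)$ from \prettyref{sec:aprioribds}, where $\theta_\pm\in H_0^1(\Omega)\cap L^\infty(\Omega)$ solve $\pm\mathbf{u}\cdot\nabla\theta_\pm=\Delta\theta_\pm+w$ with zero Dirichlet data; set $\eta=\tfrac12(\theta_+-\theta_-)$. From $\mathbf{u}\cdot\nabla\xi=\Delta\eta$ one reads off $\eta=\Delta^{-1}\text{div}(\mathbf{u}\xi)$, so the nonlocal term in $G(\xi)$ equals $\fint_\Omega|\nabla\eta|^2$; testing $\mathbf{u}\cdot\nabla\eta=\Delta\xi+w$ against $\xi$, integrating by parts using incompressibility, and using the second equation gives $\fint_\Omega w\xi=\fint_\Omega|\nabla\eta|^2+|\nabla\xi|^2$, which by the orthogonality $\nabla\eta\perp\nabla\xi$ in $L^2$ established in \prettyref{sec:aprioribds} equals $\fint_\Omega|\nabla\theta_+|^2=Nu(\mathbf{u})-1$. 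Substituting, $G(\xi)=2(Nu(\mathbf{u})-1)-\fint_\Omega|\nabla\xi|^2-\fint_\Omega|\nabla\eta|^2=Nu(\mathbf{u})-1$, so the supremum is attained at this $\xi$ and equals $Nu(\mathbf{u})-1$.

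I do not anticipate a genuine obstacle. The only care needed is bookkeeping: carrying the correct sign so the dual variable is the symmetric combination $\tfrac12(\theta_++\theta_-)$ and not its negative, checking that $\text{div}(\mathbf{u}\xi)\in H^{-1}(\Omega)$ so $\Delta^{-1}$ applies (immediate from $\mathbf{u}\in L^2$ and $\xi\in L^\infty$), and noting that weak duality is cleanest run over all of $H_0^1(\Omega)$ while the explicit maximizer conveniently lands in $H_0^1(\Omega)\cap L^\infty(\Omega)$ by the maximum principle, which is what licenses the stated $\max$. One could instead deduce attainment from Sion's minimax theorem after checking weak semicontinuity and coercivity of $L$, but the symmetrization change of variables already makes that unnecessary.
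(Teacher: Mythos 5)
Your proposal is correct, and its second half coincides with the paper's proof: the paper likewise evaluates the dual functional at the symmetrized variable $\xi=\tfrac12(\theta_++\theta_-)$, derives the identity $\fint_\Omega w\xi=\fint_\Omega|\nabla\xi|^2+|\nabla\eta|^2$ by testing the two PDEs against $\xi$ and $\eta$, and concludes the value there is $Nu-1$. Where you diverge is in certifying that no admissible $\xi$ does better. The paper handles this tersely by strict concavity of the dual functional together with its Euler--Lagrange equation (mirroring the argument for \prettyref{thm:steady-primal-varprin}): the symmetrized $\xi$ is a critical point of a concave functional, hence the global maximizer. You instead run an explicit Fenchel/weak-duality argument from the primal principle, introducing the saddle functional $L(\eta_0,\xi)$ and performing the inner minimization in $\eta_0$ by completing the square, which yields $Nu(\mathbf{u})-1\ge G(\xi)$ for every admissible $\xi$ without touching the dual Euler--Lagrange equation. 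Both routes lean on the same symmetrization machinery; yours makes the convex-duality structure between \prettyref{thm:steady-primal-varprin} and \eqref{eq:dual_steady_w2w} explicit and is somewhat more self-contained on the inequality $\max\le Nu-1$, while the paper's is shorter once \prettyref{thm:steady-primal-varprin} is in hand. The technical points you flag (that $\mathbf{u}\xi\in L^2$ so $\Delta^{-1}\mathrm{div}$ applies, and that the integration by parts defining $L$ is justified on the admissible class where $\eta_0\in L^\infty$) are exactly the right ones and are handled as you indicate.
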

\begin{proof}
Recall from the proof of \prettyref{thm:steady-primal-varprin} the
PDE system
\begin{equation}
\begin{cases}
\mathbf{u}\cdot\nabla\eta=\Delta\xi+w,\\
\mathbf{u}\cdot\nabla\xi=\Delta\eta
\end{cases}\label{eq:steadyPDEsystem-1}
\end{equation}
and the formula
\[
Nu-1=\fint_{\Omega}|\nabla\xi|^{2}+|\nabla\eta|^{2}.
\]
Testing the first equation in \prettyref{eq:steadyPDEsystem-1} with
$\xi$, the second with $\eta$, and integrating by parts yields the
string of equalities
\begin{align*}
\int_{\Omega}w\xi & =\int_{\Omega}|\nabla\xi|^{2}+\int_{\Omega}\xi\mathbf{u}\cdot\nabla\eta\\
 & =\int_{\Omega}|\nabla\xi|^{2}-\int_{\Omega}\eta\mathbf{u}\cdot\nabla\xi=\int_{\Omega}|\nabla\xi|^{2}+|\nabla\eta|^{2}.
\end{align*}
Thus,
\[
Nu-1=\fint_{\Omega}2w\xi-|\nabla\xi|^{2}-|\nabla\eta|^{2}=\fint_{\Omega}2w\xi-|\nabla\xi|^{2}-|\nabla\Delta^{-1}\text{div}(\mathbf{u}\xi)|^{2}.
\]
Note in the last step we used the PDE system again.

Now consider the maximization
\[
\sup_{\xi\in H_{0}^{1}(\Omega)\cap L^{\infty}(\Omega)}\,\fint_{\Omega}2w\xi-|\nabla\xi|^{2}-|\nabla\Delta^{-1}\text{div}(\mathbf{u}\xi)|^{2}.
\]
Reasoning with its Euler-Lagrange equation just as in the proof of
\prettyref{thm:steady-primal-varprin}, we deduce that this maximization
problem is well-posed in the given admissible class, with optimal
value $Nu-1$. 
\end{proof}
As claimed in the introduction, there is a corresponding result holding
for unsteady velocities which allows to bound $Nu$ from below, but
not necessarily to evaluate it. This result was described in \prettyref{eq:unsteady_lb},
and its proof is similar to that of \prettyref{thm:primal_Nubound_w2w}.
We remark that although the variational formulas for steady heat transport
from \prettyref{thm:steady-primal-varprin} and \prettyref{thm:dual_Nubound_w2w}
are strongly dual \textemdash they provide convex and concave alternatives
for evaluating $Nu$ whose optimal values agree \textemdash such strong
duality need not hold for unsteady flows. More precisely, we note
that for general velocity fields the bounds \prettyref{eq:unsteady_ub}
and \prettyref{eq:unsteady_lb} need not coincide. In particular,
there will be a duality gap for any velocity field which satisfies
\[
\liminf_{\tau\to\infty}\,\left\langle wT\right\rangle _{\tau}<\limsup_{\tau\to\infty}\,\left\langle wT\right\rangle _{\tau}.
\]
These fields have the peculiar property that the $\limsup$ and $\liminf$
alternatives for defining the space and long-time average heat transport
$Nu$ do not coincide.

\subsection{An integral formulation of wall-to-wall optimal transport\label{sec:An-integral-formulation}}

Having written $Nu$ for steady flows as the optimal value of the
concave maximization problem \prettyref{eq:dual_steady_w2w}, we can
now give a useful reformulation of the entire class of steady wall-to-wall
optimal transport problems from \prettyref{eq:optimalw2w_generalintensity-1}.
This ``integral'' formulation of steady optimal transport will be
used to design and evaluate nearly optimal flows in the subsequent. 

Let $F(Pe)$ denote the optimal value of the steady optimal transport
problem \prettyref{eq:optimalw2w_generalintensity-1},
\[
F(Pe)=\sup_{\substack{\mathbf{u}(\mathbf{x})\\
||\mathbf{u}||=Pe\\
+\text{b.c.}
}
}\,Nu(\mathbf{u}).
\]
 Applying \prettyref{thm:dual_Nubound_w2w}, we find that
\[
F(Pe)=\sup_{\substack{\mathbf{u}(\mathbf{x}),\xi(\mathbf{x})\\
||\mathbf{u}||=Pe\\
+\text{b.c.}
}
}\,\fint_{\Omega}2w\xi-\left|\nabla\xi\right|^{2}-\left|\nabla\Delta^{-1}\text{div}(\mathbf{u}\xi)\right|^{2}.
\]
The boundary conditions for $\mathbf{u}$ remain unchanged, while
according to \prettyref{thm:dual_Nubound_w2w} we must require that
$\xi|_{\partial\Omega}=0$. Performing the substitution
\[
\xi\to\lambda\xi,\quad\lambda\in\mathbb{R}
\]
and maximizing over $\lambda$ yields the equivalent variational problem
\[
\frac{1}{F(Pe)}=\inf_{\substack{\mathbf{u}(\mathbf{x}),\xi(\mathbf{x})\\
||\mathbf{u}||=Pe\\
+\text{b.c.}
}
}\,\frac{\fint_{\Omega}\left|\nabla\Delta^{-1}\text{div}(\mathbf{u}\xi)\right|^{2}+\left|\nabla\xi\right|^{2}}{\left(\fint_{\Omega}w\xi\right)^{2}}.
\]
Changing variables via the substitutions
\[
\mathbf{u}\to Pe\frac{\mathbf{u}}{||\mathbf{u}||}\quad\text{and}\quad\xi\to\frac{||\mathbf{u}||}{Pe}\xi
\]
allows to eliminate the intensity constraint on $\mathbf{u}$ altogether,
so that
\[
\frac{1}{F(Pe)}=\inf_{\substack{\mathbf{u}(\mathbf{x}),\xi(\mathbf{x})\\
+\text{b.c.}
}
}\,\frac{\fint_{\Omega}\left|\nabla\Delta^{-1}\text{div}(\mathbf{u}\xi)\right|^{2}+\frac{1}{Pe^{2}}||\mathbf{u}||^{2}\fint_{\Omega}\left|\nabla\xi\right|^{2}}{\left(\fint_{\Omega}w\xi\right)^{2}}.
\]
Given the scaling symmetries of the above, we may impose the constraint
\[
\fint_{\Omega}w\xi=1
\]
on the minimization without altering the result. This yields the promised
integral reformulation of wall-to-wall optimal transport

\begin{equation}
\frac{1}{F(Pe)}=\inf_{\substack{\mathbf{u}(\mathbf{x}),\xi(\mathbf{x})\\
\fint_{\Omega}w\xi=1\\
+\text{b.c.}
}
}\,\fint_{\Omega}\left|\nabla\Delta^{-1}\text{div}(\mathbf{u}\xi)\right|^{2}+\frac{1}{Pe^{2}}||\mathbf{u}||^{2}\cdot\fint_{\Omega}|\nabla\xi|^{2},\label{eq:optimaldesgnpblm_general}
\end{equation}
and proves the equivalence between \prettyref{eq:optimalw2w_generalintensity-1}
and \prettyref{eq:optimaldesgnpblm_general-1}. 

We turn to discuss the integral formulation of optimal transport just derived.
Observe \prettyref{eq:optimaldesgnpblm_general} consists of two types
of terms, each of which prefers a different kind of design. The first
term
\begin{equation}
\fint_{\Omega}\left|\nabla\Delta^{-1}\text{div}(\mathbf{u}\xi)\right|^{2}\label{eq:advectionterm}
\end{equation}
prefers $\mathbf{u}\xi$ to be divergence-free and we refer to it
as the ``advection term'' throughout. This preference is strong
in the advective regime $Pe\gg1$, as it appears at leading order
in $Pe^{-1}$ in the functional above. The remaining terms
\[
||\mathbf{u}||^{2}\quad\text{and}\quad\fint_{\Omega}|\nabla\xi|^{2}
\]
contribute at higher order in $Pe^{-1}$, and act to regularize designs.
Any admissible design must satisfy the ``net-flux'' constraint
\begin{equation}
\fint_{\Omega}w\xi=1\label{eq:netflux}
\end{equation}
as well as boundary conditions. While patterns such as the convection
roll and branching ones depicted in \prettyref{fig:designs} can be
easily made to satisfy such constraints \textemdash see \prettyref{sec:energy-constraineddesign}
and \prettyref{sec:enstrophy_constrained_design} for details \textemdash determining
the optimal lengthscales for such designs requires performing an optimization
as in \prettyref{eq:optimaldesgnpblm_general}.

Evidently, the most difficult term to evaluate is the advection one
\prettyref{eq:advectionterm}. Before turning to discuss its analysis
in detail, and what it implies for near optimal designs, we make two
general remarks. In order to get a hint as to what designs \prettyref{eq:optimaldesgnpblm_general}
prefers in the advective limit $Pe\to\infty$, one can entertain
the ``limiting'' wall-to-wall optimal transport problem
\[
\inf_{\substack{\mathbf{u}(\mathbf{x}),\xi(\mathbf{x})\\
\fint_{\Omega}w\xi=1\\
+\text{b.c.}
}
}\,\fint_{\Omega}\left|\nabla\Delta^{-1}\text{div}(\mathbf{u}\xi)\right|^{2}
\]
obtained by formally taking $Pe = \infty$ in \eqref{eq:optimaldesgnpblm_general}. This, however, is an ill-posed variational problem. Its optimal value
is zero as there exist admissible sequences $\{(\mathbf{u}_{k},\xi_{k})\}$
satisfying the net-flux constraint \prettyref{eq:netflux} and achieving
\[
\lim_{k\to\infty}\,\fint_{\Omega}|\nabla\Delta^{-1}\text{div}(\mathbf{u}_{k}\xi_{k})|^{2}=0.
\]
Yet, no smooth enough admissible pair $(\mathbf{u},\xi)$ can satisfy
the net-flux constraint and simultaneously achieve 
\begin{equation}
\text{div}(\mathbf{u}\xi)=0.\label{eq:div-free}
\end{equation}
Indeed, if $\mathbf{u}\xi$ were divergence-free then by averaging
\prettyref{eq:div-free} in the periodic variables $x$ and $y$ we
would find that the flux of $\xi$ by $\mathbf{u}$ through each slice
$\{z=\text{const.}\}$ is independent of the slice, i.e.,
\[
\overline{w\xi}(z)=\fint_{\mathbb{T}_{x,y}^{2}}w\xi
\]
is constant in $z$. Applying the boundary conditions which require
at least that $\xi|_{\partial\Omega}=0$, we conclude that $\overline{w\xi}$
must vanish throughout the entire domain. This contradicts the net-flux
constraint \prettyref{eq:netflux}. Therefore, wall-to-wall optimal
transport is a singularly perturbed variational problem: the regularizing
terms from \prettyref{eq:optimaldesgnpblm_general} which at first
glance appear to contribute at higher order in $Pe^{-1}$ are crucial
for determining the character of optimal designs. 

Our second observation is more straightforward: it is regarding the
disappearance of the intensity constraint in the passage from \prettyref{eq:optimalw2w_generalintensity-1}
to its integral formulation \prettyref{eq:optimaldesgnpblm_general}.
Since \prettyref{eq:optimaldesgnpblm_general} is invariant under
the rescaling
\[
\mathbf{u}\to\lambda\mathbf{u}\quad\text{and}\quad\xi\to\frac{1}{\lambda}\xi,\quad\lambda\neq0,
\]
the magnitudes of any of its minimizers are not uniquely determined.
Still, if $(\mathbf{u}_{Pe},\xi_{Pe})$ achieves optimality in \prettyref{eq:optimaldesgnpblm_general},
\[
\mathbf{u}=\frac{Pe}{||\mathbf{u}_{Pe}||}\mathbf{u}_{Pe}
\]
solves the wall-to-wall problem \prettyref{eq:optimalw2w_generalintensity-1}. 

\subsection{Analysis of the advection term}

For a class of designs $\{(\mathbf{u}_{\alpha},\xi_{\alpha})\}_{\alpha\in\mathcal{I}}$
to compete in the minimization \prettyref{eq:optimaldesgnpblm_general},
it must at least achieve
\[
\inf_{\alpha\in\mathcal{I}}\,\fint_{\Omega}\left|\nabla\Delta^{-1}\text{div}(\mathbf{u}_{\alpha}\xi_{\alpha})\right|^{2}=0.
\]
How difficult is it for an admissible pair $(\mathbf{u},\xi)$ to
make this advection term \emph{nearly} zero? First, note that in such
a situation, the vertical flux of $\xi$ by $\mathbf{u}$ through
each slice $\{z=\text{const.}\}$ must be nearly independent of the
slice, 
\[
\frac{d}{dz}\overline{w\xi}\approx0.
\]
By the net-flux constraint \prettyref{eq:netflux}, it follows that
\begin{equation}
\overline{w\xi}\approx1\label{eq:design principle}
\end{equation}
in nearly all of the domain. This is an example of a ``design principle''
for wall-to-wall optimal transport: any nearly optimal design must
achieve \prettyref{eq:design principle} with equality in the limit
$Pe\to\infty$. Although \prettyref{eq:design principle} does not
completely characterize optimal designs, it does give a necessary
condition for constructing competitive ones. This will be particularly
useful later on in \prettyref{sec:enstrophy_constrained_design},
where we devise a functional form for the branching depicted in \prettyref{fig:branchingrough}. 

The advection term \prettyref{eq:advectionterm} contains a wealth
of information for evaluating designs beyond \prettyref{eq:design principle},
but to use it in practice one must confront its non-locality. In
the wall-to-wall domain $\Omega=\mathbb{T}_{xy}^{2}\times[0,1]_{z}\cong[0,l_{x}]\times[0,l_{y}]\times[0,1]$,
we may take advantage of periodicity to represent a function $f\in L^{2}(\Omega)$
via its Fourier series 
\[
f(\mathbf{x})=\sum_{\mathbf{k}\in\mathbb{Z}_{l_{x},l_{y}}^{2}}\hat{f}_{\mathbf{k}}(z)e^{i\mathbf{k}\cdot\mathbf{x}'}
\]
where $\mathbf{x}'=(x,y)$ and 
\[
\mathbb{Z}_{l_{x},l_{y}}^{2}=\left\{ (k_{x},k_{y}):\frac{l_{x}}{2\pi}k_{x},\frac{l_{y}}{2\pi}k_{y}\in\mathbb{Z}\right\} .
\]
We employ the Fourier transform 
\[
\hat{f}_{\mathbf{k}}(z)=\fint_{\mathbb{T}_{xy}^{2}}e^{i\mathbf{k}\cdot\mathbf{x}'}f(\mathbf{x}',z)\,dxdy
\]
and proceed to decompose the advection term mode-by-mode.
\begin{lem}
\label{lem:advection_Fourierests_w2w} Let $\mathbf{u}\in L^{2}(\Omega;\mathbb{R}^{3})$
and $\xi\in L^{\infty}(\Omega)$. The advection term satisfies 
\[
\fint_{\Omega}\left|\nabla\Delta^{-1}\emph{div}(\mathbf{u}\xi)\right|^{2}=\fint_{I_{z}}|\overline{w\xi}-\fint_{\Omega}w\xi|^{2}+\mathcal{Q}(\emph{div}(\mathbf{u}\xi))
\]
where $\mathcal{Q}=\sum_{\mathbf{k}\neq\mathbf{0}}Q_{\mathbf{k}}$
and $\mathcal{Q}_{\mathbf{k}}$ is the positive semi-definite quadratic
form given by
\[
\mathcal{Q}_{\mathbf{k}}(f)=\fint_{I_{z}\times I_{z'}}G_{\mathbf{k}}(z,z')\hat{f}_{\mathbf{k}}(z)\hat{f}_{\mathbf{k}}^{*}(z')\,dzdz'
\]
with kernel
\[
G_{\mathbf{k}}(z,z')=\frac{\emph{csch}(|\mathbf{k}|)}{|\mathbf{k}|}\times\begin{cases}
\sinh(|\mathbf{k}|z)\sinh(|\mathbf{k}|(1-z')) & z\leq z'\\
\sinh(|\mathbf{k}|z')\sinh(|\mathbf{k}|(1-z)) & z\geq z'
\end{cases}\qquad\mathbf{k}\neq0.
\]
\end{lem}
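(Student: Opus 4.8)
The plan is to expand everything in a Fourier series in the periodic variables $\mathbf{x}'=(x,y)$ and to solve the resulting family of two-point boundary value problems in $z$, one for each wavenumber $\mathbf{k}$. Write $f=\text{div}(\mathbf{u}\xi)\in H^{-1}(\Omega)$ and $\phi=\Delta^{-1}f\in H_0^1(\Omega)$, so the advection term is $\fint_\Omega|\nabla\phi|^2$. Passing to Fourier coefficients, $\phi=\sum_{\mathbf{k}}\hat\phi_{\mathbf{k}}(z)e^{i\mathbf{k}\cdot\mathbf{x}'}$ where $\hat\phi_{\mathbf{k}}$ solves $(\partial_z^2-|\mathbf{k}|^2)\hat\phi_{\mathbf{k}}=\hat f_{\mathbf{k}}$ on $I_z$ with $\hat\phi_{\mathbf{k}}(0)=\hat\phi_{\mathbf{k}}(1)=0$. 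By Parseval's identity (in the normalization of the lemma, with $|\Omega|=l_xl_y$ and $\fint_{I_z}=\int_0^1$) one has $\fint_\Omega|\nabla\phi|^2=\sum_{\mathbf{k}}\int_0^1\bigl(|\partial_z\hat\phi_{\mathbf{k}}|^2+|\mathbf{k}|^2|\hat\phi_{\mathbf{k}}|^2\bigr)\,dz$, so it remains to evaluate each mode. I would treat the zero mode and the nonzero modes separately, as they behave quite differently: the former produces the local first term, the latter the nonlocal quadratic form $\mathcal{Q}$.

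For $\mathbf{k}=\mathbf{0}$: by periodicity, the $\partial_x(u\xi)$ and $\partial_y(v\xi)$ parts of $\text{div}(\mathbf{u}\xi)$ average to zero over $\mathbb{T}_{xy}^2$, so $\hat f_{\mathbf{0}}=\partial_z\overline{w\xi}$. Integrating $\partial_z^2\hat\phi_{\mathbf{0}}=\partial_z\overline{w\xi}$ once and fixing the constant of integration by $\hat\phi_{\mathbf{0}}(0)=\hat\phi_{\mathbf{0}}(1)$ gives $\partial_z\hat\phi_{\mathbf{0}}=\overline{w\xi}-\fint_{I_z}\overline{w\xi}=\overline{w\xi}-\fint_\Omega w\xi$. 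Since the zero mode carries no horizontal gradient, its share of $\fint_\Omega|\nabla\phi|^2$ is exactly $\int_0^1\bigl|\overline{w\xi}-\fint_\Omega w\xi\bigr|^2\,dz$, which is the first term in the statement.

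For $\mathbf{k}\neq\mathbf{0}$: the operator $-\partial_z^2+|\mathbf{k}|^2$ with Dirichlet data on $I_z$ is invertible, and I would verify by the standard recipe that its Green's function is precisely the stated $G_{\mathbf{k}}$: the two homogeneous solutions $\sinh(|\mathbf{k}|z)$ and $\sinh(|\mathbf{k}|(1-z))$ are glued along the diagonal by continuity and a unit jump of the $z$-derivative, and the identity $\cosh a\sinh b+\sinh a\cosh b=\sinh(a+b)$ collapses the Wronskian to $\sinh|\mathbf{k}|$, which is the source of the $\text{csch}(|\mathbf{k}|)/|\mathbf{k}|$ prefactor. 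Hence $\hat\phi_{\mathbf{k}}(z)=-\int_0^1 G_{\mathbf{k}}(z,z')\hat f_{\mathbf{k}}(z')\,dz'$. Integrating by parts in $z$ (the boundary terms vanish) and substituting the last identity, using that $G_{\mathbf{k}}$ is real and symmetric, gives $\int_0^1\bigl(|\partial_z\hat\phi_{\mathbf{k}}|^2+|\mathbf{k}|^2|\hat\phi_{\mathbf{k}}|^2\bigr)\,dz=-\int_0^1\hat\phi_{\mathbf{k}}^*\hat f_{\mathbf{k}}\,dz=\int_0^1\!\!\int_0^1 G_{\mathbf{k}}(z,z')\hat f_{\mathbf{k}}(z)\hat f_{\mathbf{k}}^*(z')\,dz\,dz'=\mathcal{Q}_{\mathbf{k}}(f)$. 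Positive semi-definiteness of $\mathcal{Q}_{\mathbf{k}}$ is then automatic since it equals $\int_0^1\bigl(|\partial_z\hat\phi_{\mathbf{k}}|^2+|\mathbf{k}|^2|\hat\phi_{\mathbf{k}}|^2\bigr)\,dz\ge 0$. Summing over $\mathbf{k}\neq\mathbf{0}$ and adding the zero-mode term completes the proof.

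The computation is essentially routine; the two points requiring care are (i) bookkeeping of the $\fint$-normalizations so that Parseval has the stated form, and (ii) the low regularity of $f=\text{div}(\mathbf{u}\xi)$, which a priori lies only in $H^{-1}$ so that $\hat f_{\mathbf{k}}$ need not be a function of $z$. For (ii) I would first prove the identity for smooth $\mathbf{u},\xi$, where every manipulation is classical, and then pass to the limit: the left side $\fint_\Omega|\nabla\Delta^{-1}\text{div}(\mathbf{u}\xi)|^2$ is continuous in $\mathbf{u}\xi\in L^2(\Omega;\mathbb{R}^3)$, and each $\mathcal{Q}_{\mathbf{k}}(f)$ is to be read as the $H^{-1}(I_z)$–$H_0^1(I_z)$ pairing $-\langle\hat f_{\mathbf{k}},\hat\phi_{\mathbf{k}}\rangle=\int_0^1(|\partial_z\hat\phi_{\mathbf{k}}|^2+|\mathbf{k}|^2|\hat\phi_{\mathbf{k}}|^2)\,dz$, which is likewise stable; alternatively one reads the displayed integrals as these pairings from the outset. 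The conceptual content, which is what matters downstream, is that all the nonlocality of the advection term sits in the $\mathcal{Q}_{\mathbf{k}}$ with $\mathbf{k}\neq\mathbf{0}$, while the $\mathbf{k}=\mathbf{0}$ mode contributes the purely local term $\fint_{I_z}\bigl|\overline{w\xi}-\fint_\Omega w\xi\bigr|^2$ underlying the design principle $\overline{w\xi}\approx 1$.
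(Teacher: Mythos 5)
Your argument is correct and follows essentially the same route as the paper: Fourier decomposition in the periodic horizontal variables, Parseval, and the Green's function of $-\partial_z^2+|\mathbf{k}|^2$ with Dirichlet conditions for the nonzero modes, with the $\mathbf{k}=\mathbf{0}$ mode computed separately to produce the local term $\fint_{I_z}|\overline{w\xi}-\fint_\Omega w\xi|^2$. The extra care you take in verifying the Wronskian behind $G_{\mathbf{k}}$ and in handling the $H^{-1}$ regularity of $\mathrm{div}(\mathbf{u}\xi)$ by density is consistent with, and slightly more detailed than, the paper's proof.
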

\begin{proof}
This follows from the Green's function representation for $-\Delta^{-1}$
on $\Omega$ with vanishing Dirichlet boundary conditions. Calling
$J=\text{div}(\mathbf{u}\xi)$ and applying Parseval's identity, we
see that the advection term can be written as
\begin{align*}
\fint_{\Omega}\left|\nabla\Delta^{-1}\text{div}(\mathbf{u}\xi)\right|^{2} & =\frac{1}{|\Omega|}\left(J,-\Delta^{-1}J\right)=\sum_{\mathbf{k}}\fint_{I_{z}}(-\frac{d^{2}}{dz^{2}}+|\mathbf{k}|^{2})^{-1}\hat{J}_{\mathbf{k}}\hat{J}_{\mathbf{k}}^{*}\\
 & =\sum_{\mathbf{k}}\fint_{I_{z}\times I_{z'}}G_{\mathbf{k}}(z,z')\hat{J}_{\mathbf{k}}(z)\hat{J}_{\mathbf{k}}^{*}(z')\,dzdz'
\end{align*}
where for $\mathbf{k}\neq\mathbf{0}$ the functions $G_{\mathbf{k}}$
are as defined above. For $\mathbf{k}=\mathbf{0}$, we have 
\[
G_{\mathbf{0}}(z,z')=\begin{cases}
z(1-z') & z\leq z'\\
z'(1-z) & z\geq z'
\end{cases}.
\]
We must only address the form of the $\mathbf{0}$th term now.

We recognize that
\[
\fint_{I_{z}\times I_{z'}}G_{\mathbf{0}}(z,z')\hat{J}_{\mathbf{0}}(z)\hat{J}_{\mathbf{0}}(z')\,dzdz'=\fint_{\Omega}|\nabla\Delta^{-1}\overline{J}(z)|^{2}.
\]
By periodicity,
\[
\overline{J}=\overline{\text{div}(\mathbf{u}\xi)}=\frac{d}{dz}\overline{w\xi}.
\]
Thus,
\begin{align*}
\fint_{\Omega}|\nabla\Delta^{-1}\overline{J}(z)|^{2}& =\fint_{\Omega}|\frac{d}{dz}\left(\frac{d}{dz}\right)^{-2}\frac{d}{dz}\overline{w\xi}(z)|^{2} \\ 
& =\fint_{I_{z}}|\overline{w\xi}-\fint_{I_{z}}\overline{w\xi}|^{2}=\fint_{I_{z}}|\overline{w\xi}-\fint_{\Omega}w\xi|^{2}
\end{align*}
as required.
\end{proof}
As the quadratic form $\mathcal{Q}$ from \prettyref{lem:advection_Fourierests_w2w}
is non-negative, the advection term satisfies the lower bound
\[
\fint_{\Omega}\left|\nabla\Delta^{-1}\text{div}(\mathbf{u}\xi)\right|^{2}\geq\fint_{I_{z}}|\overline{w\xi}-\fint_{\Omega}w\xi|^{2}.
\]
Note this quantifies the design principle \prettyref{eq:design principle}.
The appearance of $\mathcal{Q}$ in \prettyref{lem:advection_Fourierests_w2w}
also makes clear why this principle alone does not suffice to characterize
optimal designs.

We end this section by recording some useful estimates on the kernels
$\{G_{\mathbf{k}}\}$ from the definition of $\mathcal{Q}$. These
will be used later in \prettyref{sec:enstrophy_constrained_design}.
\begin{lem}
\label{lem:Gestimates} Let $A\subset I_{z}$ be Borel measurable.
Then, 
\[
||G_{\mathbf{k}}||_{L^{1}(A\times A)}\lesssim\frac{|A|}{|\mathbf{k}|}(|A|\wedge\frac{1}{|\mathbf{k}|})\quad\mathbf{k}\neq0
\]
and 
\[
||G_{\mathbf{0}}||_{L^{1}(A\times A)}\lesssim|A|^{2}\fint_{A}z\wedge(1-z)\,dz.
\]
\end{lem}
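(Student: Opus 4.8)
The plan is to use nonnegativity of the kernels to rewrite each $L^1$-norm as a double integral, and then estimate via two elementary bounds on the Green's function $G_{\mathbf k}$. Since $G_{\mathbf k}\ge 0$ on $I_z\times I_z$, we have $\|G_{\mathbf k}\|_{L^1(A\times A)}=\int_A\int_A G_{\mathbf k}(z,z')\,dz\,dz'$, so in both cases everything reduces to bounding this double integral.

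For $\mathbf k\ne\mathbf 0$, write $k=|\mathbf k|$. First I would record the sup bound $\|G_{\mathbf k}\|_{L^\infty(I_z\times I_z)}\le\frac{1}{2k}$: for fixed $z'$ the function $z\mapsto G_{\mathbf k}(z,z')$ satisfies $\partial_z^2 G_{\mathbf k}=k^2 G_{\mathbf k}\ge 0$ on $I_z\setminus\{z'\}$ and vanishes at the endpoints, hence is convex on $[0,z']$ and on $[z',1]$ and therefore maximized at $z=z'$; the identity $\sinh(kz')\sinh(k(1-z'))=\tfrac12\big(\cosh k-\cosh(k(2z'-1))\big)$ then gives $G_{\mathbf k}(z',z')=\frac{\cosh k-\cosh(k(2z'-1))}{2k\sinh k}\le\frac{\cosh k-1}{2k\sinh k}=\frac{\tanh(k/2)}{2k}\le\frac{1}{2k}$. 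Second, I would record the integrated bound $\int_{I_z}G_{\mathbf k}(z,z')\,dz'\le\frac{1}{k^2}$ for every $z$: the function $v(z)=\int_{I_z}G_{\mathbf k}(z,z')\,dz'$ solves $-v''+k^2 v=1$ with $v(0)=v(1)=0$, so $v(z)=\frac{1}{k^2}\big(1-\frac{\cosh(k(z-1/2))}{\cosh(k/2)}\big)\le\frac{1}{k^2}$. (Both facts also follow at once from the comparison $G_{\mathbf k}(z,z')\le\frac{1}{2k}e^{-k|z-z'|}$ with the whole-line Green's function, via the maximum principle for $-\partial_z^2+k^2$.)

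These two facts combine directly: on the one hand $\int_A\int_A G_{\mathbf k}\le\|G_{\mathbf k}\|_{L^\infty}|A|^2\le\frac{|A|^2}{2k}$, and on the other $\int_A\int_A G_{\mathbf k}\le\int_A\big(\int_{I_z}G_{\mathbf k}(z,z')\,dz'\big)\,dz\le\frac{|A|}{k^2}$; taking the smaller of the two yields $\int_A\int_A G_{\mathbf k}\le\frac{|A|}{k}\min\{\tfrac{|A|}{2},\tfrac{1}{k}\}\le\frac{|A|}{k}\big(|A|\wedge\tfrac{1}{k}\big)$, which is the claimed estimate (in fact with absolute constant $1$). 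For $\mathbf k=\mathbf 0$ it suffices to observe the pointwise bound $G_{\mathbf 0}(z,z')=\min(z,z')\big(1-\max(z,z')\big)\le z\wedge(1-z)$; integrating $z'$ over $A$ and then $z$ over $A$ gives $\int_A\int_A G_{\mathbf 0}\le|A|\int_A z\wedge(1-z)\,dz=|A|^2\fint_A z\wedge(1-z)\,dz$, again with constant $1$. The only step that requires a little care is the parameter-free sup bound $\|G_{\mathbf k}\|_{L^\infty}\le\frac{1}{2k}$ — the main (minor) obstacle — which I would either extract from the hyperbolic identity above or deduce together with the integrated bound from the whole-line comparison.
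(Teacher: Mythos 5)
Your proof is correct and follows essentially the same route as the paper's: for $\mathbf{k}\neq\mathbf{0}$ the paper also combines the sup bound $\|G_{\mathbf{k}}\|_{L^{\infty}}\lesssim 1/|\mathbf{k}|$ with the bound obtained by integrating out one variable (both extracted there from the single pointwise comparison $G_{\mathbf{k}}(z,z')\leq\frac{1}{|\mathbf{k}|}e^{-|\mathbf{k}||z-z'|}$, which is exactly your whole-line Green's function comparison), and for $\mathbf{k}=\mathbf{0}$ it likewise reduces to the pointwise bound $G_{\mathbf{0}}(z,z')\leq z\wedge(1-z)$ after integrating over $A\times A$. Your write-up merely supplies more detail (the convexity/ODE justifications of the two kernel bounds, which the paper asserts without proof) and tracks slightly sharper constants; there is no substantive difference in method.
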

\begin{proof}
To see the first estimate, observe that $G_{\mathbf{k}}$ satisfies
the pointwise estimate
\[
|G_{\mathbf{k}}(z,z')|\leq\frac{1}{|\mathbf{k}|}e^{-|\mathbf{k}||z-z'|}\quad\mathbf{k}\neq\mathbf{0}.
\]
Now,
\[
\int_{I_{z}\times I_{z}}e^{-|\mathbf{k}||z-z'|}\indicator{A}(z)\indicator{A}(z')\,dzdz'\leq\int_{I_{z}}\left[\int_{\R}e^{-|\mathbf{k}||z-z'|}dz'\right]\indicator{A}(z)\,dz=\frac{2}{|\mathbf{k}|}|A|.
\]
Therefore,
\[
||G_{\mathbf{k}}||_{L^{1}(A\times A)}\lesssim\frac{|A|}{|\mathbf{k}|^{2}}.
\]
On the other hand, we have that
\[
||G_{\mathbf{k}}||_{L^{1}(A\times A)}\leq||G_{\mathbf{k}}||_{L^{\infty}}||1_{A\times A}||_{L^{1}}\leq\frac{|A|^{2}}{|\mathbf{k}|}.
\]
Combining these two bounds gives the first result.

Now we prove the second estimate. We need to show that
\[
\int G_{\mathbf{0}}(z,z')\indicator{A}(z)\indicator{A}(z')\,dzdz'\lesssim|A|\int_{A}z\wedge(1-z)\,dz.
\]
By symmetry, 
\begin{align*}
\int G_{\mathbf{0}}(z,z'))\indicator{A}(z)\indicator{A}(z')\,dzdz' & =2\int_{0}^{1}\int_{0}^{z'}z(1-z')1_{A}(z)1_{A}(z')\,dzdz'\\
 & \leq2\int_{0}^{1}\int_{0}^{z'}z'(1-z')1_{A}(z)1_{A}(z')\,dzdz' \\ 
 & \leq2|A|\int_{0}^{1}z(1-z)\,dz.
\end{align*}
Since 
\[
z(1-z)\leq z\wedge(1-z)\quad\forall\,z\in[0,1]
\]
we conclude the desired result.
\end{proof}

\section{Energy-constrained transport and convection roll designs\label{sec:energy-constraineddesign}}

In the previous section, we considered the general class of steady
wall-to-wall optimal transport problems \prettyref{eq:optimalw2w_generalintensity-1}
and produced their equivalent integral formulations \prettyref{eq:optimaldesgnpblm_general-1}.
In this section and the next, we use these formulations to study the
steady energy- and enstrophy-constrained problems. The subsequent
analyses are largely independent. Nevertheless, the reader may find
it helpful to study the energy-constrained problem first as its proof
is much shorter and its technical details much less burdensome.

Here we discuss energy-constrained transport. The main result of this
section is a proof of the lower bound from \prettyref{thm:mainbounds_w2w_eng}.
Recall from \prettyref{sec:OptlDesignProb_w2w} that the steady energy-constrained
optimal transport problem
\begin{equation}
\max_{\substack{\mathbf{u}(\mathbf{x})\\
\fint_{\Omega}|\mathbf{u}|^{2}=Pe^{2}\\
w|_{\partial\Omega}=0
}
}\,Nu(\mathbf{u})\label{eq:steady_w2w_eng}
\end{equation}
admits the integral formulation 
\begin{equation}
\min_{\substack{\mathbf{u}(\mathbf{x}),\xi(\mathbf{x})\\
\fint_{\Omega}w\xi=1\\
w|_{\partial\Omega}=\xi|_{\partial\Omega}=0
}
}\,\fint_{\Omega}\left|\nabla\Delta^{-1}\text{div}(\mathbf{u}\xi)\right|^{2}+\epsilon\fint_{\Omega}|\mathbf{u}|^{2}\cdot\fint_{\Omega}|\nabla\xi|^{2}\label{eq:optimaldesign_w2weng}
\end{equation}
with $\epsilon=Pe^{-2}$. The optimal values of \prettyref{eq:steady_w2w_eng}
and \prettyref{eq:optimaldesign_w2weng} are reciprocals and their
optimizers are related through symmetrization. 

Our goal now is to identify the scaling law of \prettyref{eq:optimaldesign_w2weng}
in the advective regime $\epsilon\ll1$. Combined with the results
of \prettyref{sec:OptlDesignProb_w2w}, this completes the proof of
the lower bound half of \prettyref{thm:mainbounds_w2w_eng}. 
\begin{prop}
\label{prop:lowerbound_engconstr_w2w} Let $\mathfrak{E}(\epsilon)$
denote the optimal value of \prettyref{eq:optimaldesign_w2weng}.
Then, 
\[
\mathfrak{E}(\epsilon)\sim\epsilon^{1/2}
\]
whenever $\epsilon\lesssim1\wedge l_{x}^{4}\wedge l_{y}^{4}$. 
\end{prop}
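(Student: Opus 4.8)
Here is how I would approach the proposition.

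The statement packages two estimates — a lower bound $\mathfrak{E}(\epsilon)\gtrsim\epsilon^{1/2}$ and an upper bound $\mathfrak{E}(\epsilon)\lesssim\epsilon^{1/2}$ — and I would treat them separately. The lower bound is essentially free, being dual to the \emph{a priori} transport bound of \prettyref{sec:aprioribds}: since the optimal values of \prettyref{eq:steady_w2w_eng} and \prettyref{eq:optimaldesign_w2weng} are reciprocals, $\mathfrak{E}(\epsilon)$ is the reciprocal of the optimal value of \prettyref{eq:steady_w2w_eng} with $Pe=\epsilon^{-1/2}$; and the bound $Nu(\mathbf{u})\le 1+\frac{1}{2}\langle|w|^2\rangle^{1/2}$ established in \prettyref{sec:aprioribds}, together with $\langle|w|^2\rangle\le\langle|\mathbf{u}|^2\rangle=Pe^2$, shows this optimal value is $\le 1+\frac{1}{2}Pe$, whence $\mathfrak{E}(\epsilon)\ge(1+\frac{1}{2}\epsilon^{-1/2})^{-1}\gtrsim\epsilon^{1/2}$ once $\epsilon$ is small. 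I note that arguing directly inside \prettyref{eq:optimaldesign_w2weng}, via Cauchy-Schwarz on the net-flux constraint and Poincar\'e's inequality for $\xi$, yields only the weaker $\mathfrak{E}(\epsilon)\gtrsim\epsilon$; it is genuinely the transport bound of \prettyref{sec:aprioribds} that supplies the right power. All the work is therefore in the upper bound, for which it suffices to exhibit a single admissible pair $(\mathbf{u},\xi)$ in \prettyref{eq:optimaldesign_w2weng} whose value is $\lesssim\epsilon^{1/2}$.

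For the competitor the plan is to use a single-wavenumber, two-dimensional convection roll together with a companion field $\xi$ that shares its \emph{vertical} profile. Concretely: pick an admissible horizontal wavenumber $\mathbf{k}=(k,0)$ with $k\sim\epsilon^{-1/4}$, a boundary-layer width $\delta\sim k^{-2}$, and the Lipschitz profile $p(z)$ equal to $z/\delta$ on $[0,\delta]$, to $1$ on $[\delta,1-\delta]$, and to $(1-z)/\delta$ on $[1-\delta,1]$. Take the stream function $\psi=a\,p(z)\sin(kx)$, the $y$-independent divergence-free velocity $\mathbf{u}=(-\partial_z\psi,0,\partial_x\psi)$, and $\xi=b\,p(z)\cos(kx)$ in quadrature phase. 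Then $w=\partial_x\psi$ and $\xi$ vanish on $\partial\Omega$ because $p(0)=p(1)=0$, so the pair is admissible; the net-flux normalization $\fint_\Omega w\xi=\frac{abk}{2}\int_0^1 p^2\,dz=1$ pins down $ab\sim k^{-1}$, the split between $a$ and $b$ being immaterial by the rescaling symmetry of \prettyref{eq:optimaldesign_w2weng}.

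The decisive feature of this choice is that, because $\psi$ and $\xi$ carry the \emph{same} profile $p$ in quadrature, $\text{div}(\mathbf{u}\xi)=\mathbf{u}\cdot\nabla\xi=abk\,p(z)p'(z)=\frac{abk}{2}(p^2)'(z)$ depends on $z$ alone — every nonzero horizontal Fourier mode cancels. In the decomposition of \prettyref{lem:advection_Fourierests_w2w} this forces $\mathcal{Q}(\text{div}(\mathbf{u}\xi))\equiv 0$, so the advection term collapses to the elementary $\fint_{I_z}|\overline{w\xi}-1|^2=\fint_{I_z}|\frac{abk}{2}p^2-1|^2\lesssim\delta$, supported in the two boundary layers. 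Meanwhile $\fint_\Omega|\mathbf{u}|^2\sim a^2(\delta^{-1}+k^2)$ and $\fint_\Omega|\nabla\xi|^2\sim b^2(\delta^{-1}+k^2)$, so with $ab\sim k^{-1}$ and $\delta\sim k^{-2}$ one gets $\epsilon\,\fint_\Omega|\mathbf{u}|^2\cdot\fint_\Omega|\nabla\xi|^2\sim\epsilon k^{-2}(\delta^{-1}+k^2)^2\sim\epsilon k^2$. Adding these, the competitor achieves a value $\lesssim k^{-2}+\epsilon k^2$, which the choice $k\sim\epsilon^{-1/4}$ balances at $\lesssim\epsilon^{1/2}$. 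The hypothesis $\epsilon\lesssim 1\wedge l_x^4\wedge l_y^4$ is exactly what is needed to guarantee that a horizontal wavenumber of size $\epsilon^{-1/4}$ is admissible for $\mathbb{T}_{xy}^{2}$ (at least one full period fits in the box) and that $\delta\sim k^{-2}\le\frac{1}{2}$; the remaining estimates are elementary calculus with $p$.

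The only real obstacle is the nonlocal advection term $\fint_\Omega|\nabla\Delta^{-1}\text{div}(\mathbf{u}\xi)|^2$, and matching the vertical profiles is precisely the device for taming it. Were one to take instead a generic single-mode $\xi$ with a vertical profile different from $\psi$'s, the product $\mathbf{u}\xi$ would carry a $2k$ Fourier mode supported in the boundary layers; estimating the resulting $\mathcal{Q}_{2k}$ through the kernel bounds of \prettyref{lem:Gestimates} adds $\sim(k^2\delta)^{-1}$ to the advection energy, leaving an irreducible floor $\gtrsim k^{-1}$ after optimizing in $\delta$, and this degrades the final bound to $\epsilon^{1/3}$ — short of the lower bound and hence useless for \prettyref{thm:mainbounds_w2w_eng}. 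Annihilating that mode by aligning the profiles of $\psi$ and $\xi$ is what reduces the advection term to a one-dimensional boundary-layer integral and lets $\delta$ be taken as thin as $k^{-2}$, which is in turn what makes the two competing terms in \prettyref{eq:optimaldesign_w2weng} balance at the optimal power $\epsilon^{1/2}$.
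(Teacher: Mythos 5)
Your proposal is correct and follows essentially the same route as the paper: the lower bound is deduced from the \emph{a priori} bound $Nu\lesssim Pe$ via the reciprocal relation, and the upper bound uses the same single-wavenumber convection-roll pair with $\psi$ and $\xi$ in quadrature sharing a common vertical profile, so that $\mathbf{u}\cdot\nabla\xi$ depends on $z$ alone, $\mathcal{Q}$ vanishes, and the advection term reduces to the boundary-layer integral $\lesssim\delta$; your scalings $\delta\sim k^{-2}\sim\epsilon^{1/2}$ coincide with the paper's $l\sim\sqrt{\delta}$, $\delta\sim\epsilon^{1/2}$. The only cosmetic difference is your piecewise-linear profile $p$ in place of the paper's smooth cutoff $\chi_\delta$ normalized by $\fint\chi_\delta^2=1$.
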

The \emph{a priori }lower bound $\mathfrak{E}(\epsilon)\gtrsim\epsilon^{1/2}$
is implied by the corresponding bound $Nu\lesssim Pe$ from \prettyref{thm:mainbounds_w2w_eng}.
The remainder of this section is regarding the upper bound $\mathfrak{E}(\epsilon)\lesssim\epsilon^{1/2}$.
We prove it by constructing the convection roll designs depicted in
\prettyref{fig:rollsrough}. Such designs can be parameterized using
two variables: the number of rolls and their wall-to-wall extent.
Carrying out the optimization from \prettyref{eq:optimaldesign_w2weng}
with respect to these variables yields the desired upper bound. The
condition that $\epsilon$ be small enough in the statement above
is required to fit what would be, in the absence of an overall horizontal
period, an optimal number of rolls inside the domain. Given the symmetry
between $x$ and $y$, we may suppose that $l_{x}\leq l_{y}$ in what
follows. 

\subsection{Convection roll designs}

The integral formulation \prettyref{eq:optimaldesign_w2weng} requires
designing a velocity field $\mathbf{u}$ and a test function $\xi$.
For the velocity field, we introduce a family of streamfunctions of
the form
\[
\psi(x,z)=\chi(z)\Psi(x)\quad\chi\in C_{c}^{\infty}(I_{z}),\ \Psi\in C^{\infty}(\mathbb{T}_{x}).
\]
Each such $\psi$ gives rise to a divergence-free velocity field by
\[
\mathbf{u}=(-\partial_{z}\psi,0,\partial_{x}\psi).
\]
These are two-dimensional flows as their $\hat{j}$-component vanishes
identically. Although we do not claim that optimizers must be of this
form, we will prove that such a construction suffices to capture the
optimal scaling law of \prettyref{eq:optimaldesign_w2weng}.

Next we must describe test functions $\xi$ well-suited to the velocity
fields. Recall the design principle \prettyref{eq:design principle},
which states that for a design to be competitive it must satisfy 
\begin{equation}
\overline{w\xi}\approx\fint_{\Omega}w\xi=1.\label{eq:designprinciple-recalled}
\end{equation}
This rules out taking, for instance, $\xi=\psi$ as it would result
in zero flux thru each slice $\{z=\text{const}.\}$. We can, however,
choose $\xi$ to depend only on $x$ and $z$ as does $\psi$. Then
by Parseval's identity we can rewrite the flux as 
\[
\overline{w\xi}=\sum_{\mathbf{k}}\hat{w}_{\mathbf{k}}(z)\hat{\xi}_{\mathbf{k}}^{*}(z)=\sum_{\mathbf{k}}(ik_{x}^{1/2}\hat{\psi}_{\mathbf{k}})(k_{x}^{1/2}\hat{\xi}_{\mathbf{k}})^{*}.
\]
Taking
\[
|\partial_{x}^{1/2}\xi|\sim|\partial_{x}^{1/2}\psi|
\]
allows to satisfy \prettyref{eq:designprinciple-recalled}.

Now we make the convection roll construction concrete. Given $\delta\in(0,1/2)$
and $l$ such that 
\[
\frac{1}{l}\in\frac{2\pi}{l_{x}}\mathbb{N},
\]
we define
\begin{align*}
\psi_{\delta,l}(x,z) & =\chi_{\delta}(z)\cdot l^{1/2}\Psi(\frac{x}{l})\\
\xi_{\delta,l}(x,z) & =\chi_{\delta}(z)\cdot l^{1/2}\Psi'(\frac{x}{l})
\end{align*}
where 
\[
\Psi(x)=c_{0}\cos x
\]
and $c_{0}$ is chosen so that $\overline{(\Psi')^{2}}=1$. Here,
the cut-off functions $\{\chi_{\delta}\}\subset C_{c}^{\infty}(I_{z})$
are required to satisfy 
\begin{itemize}
\item $\chi_{\delta}=1$ on $[\delta,1-\delta]$,
\item $|\chi_{\delta}|\lesssim1$ and $|\chi'_{\delta}|\lesssim\frac{1}{\delta}$,
\item $\fint_{I_{z}}\chi_{\delta}^{2}=1$.
\end{itemize}
The constants in these assumptions are independent of all parameters.
In what follows, we often neglect to record the subscripts $\delta$
and $l$ as the meaning is clear.

First, we check admissibility.
\begin{lem}
The convection roll construction described above is admissible for
\prettyref{eq:optimaldesign_w2weng}. 
\end{lem}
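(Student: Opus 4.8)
The plan is to check, one by one, the conditions packaged into the admissible class of \prettyref{eq:optimaldesign_w2weng}: that $\mathbf{u}$ is divergence-free, that $(\mathbf{u},\xi)$ has the requisite regularity, that $w$ and $\xi$ vanish on $\partial\Omega$, and that the net-flux normalization $\fint_\Omega w\xi=1$ holds. None of this is deep; the only step that genuinely uses a hypothesis rather than a definition is the last one, where the commensurability condition $1/l\in\frac{2\pi}{l_x}\mathbb{N}$ is exactly what is needed.

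First I would note that $\mathbf{u}=(-\partial_z\psi_{\delta,l},0,\partial_x\psi_{\delta,l})$ is the curl of the potential $\psi_{\delta,l}\,\hat{j}$ and is hence automatically divergence-free, and that it is genuinely $\mathbb{T}_{xy}^2$-periodic precisely because $1/l\in\frac{2\pi}{l_x}\mathbb{N}$ forces $x\mapsto\Psi(x/l)$ to have period dividing $l_x$. Since $\chi_\delta\in C_c^\infty(I_z)$ and $\Psi\in C^\infty(\mathbb{T}_x)$, both $\mathbf{u}$ and $\xi_{\delta,l}$ are smooth on $\Omega$ and supported away from $\{z=0\}$ and $\{z=1\}$ in the vertical direction, so in particular $\mathbf{u}\in L^2(\Omega;\mathbb{R}^3)$ and $\xi_{\delta,l}\in H_0^1(\Omega)\cap L^\infty(\Omega)$. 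The vertical velocity is $w=\partial_x\psi_{\delta,l}=\chi_\delta(z)\,l^{-1/2}\Psi'(x/l)$, which carries the factor $\chi_\delta(z)$; so does $\xi_{\delta,l}=\chi_\delta(z)\,l^{1/2}\Psi'(x/l)$. Hence $w|_{\partial\Omega}=\xi|_{\partial\Omega}=0$, and the boundary conditions of \prettyref{eq:optimaldesign_w2weng} are met.

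It remains to verify $\fint_\Omega w\xi=1$, which I expect to be the only mildly computational point. Multiplying out, $w\xi=\chi_\delta(z)^2\,\Psi'(x/l)^2$, so by Fubini
\[
\fint_\Omega w\xi=\left(\fint_{I_z}\chi_\delta(z)^2\,dz\right)\left(\fint_{\mathbb{T}_{xy}^2}\Psi'(x/l)^2\,dx\,dy\right).
\]
The first factor equals $1$ by the normalization $\fint_{I_z}\chi_\delta^2=1$ built into the cut-offs. For the second, the condition $1/l\in\frac{2\pi}{l_x}\mathbb{N}$ says that $l_x/l$ is a positive integer multiple of $2\pi$, so as $x$ traverses one horizontal period $[0,l_x]$ the argument $x/l$ traverses a whole number of periods of the $2\pi$-periodic function $(\Psi')^2$; therefore the $\mathbb{T}_{xy}^2$-average reduces to the single-period average $\overline{(\Psi')^2}$, which is $1$ by the choice of $c_0$. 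Thus $\fint_\Omega w\xi=1$ and $(\mathbf{u},\xi)$ is admissible. I do not anticipate any real obstacle here: the argument is bookkeeping with the rescaling $x\mapsto x/l$ together with the two normalizations imposed on $\chi_\delta$ and $\Psi$.
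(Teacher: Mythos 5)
Your proof is correct and follows essentially the same route as the paper: the paper dismisses everything except the net-flux constraint as clear and then computes $\overline{w\xi}=\chi_\delta^2\,\overline{(\Psi')^2}=\chi_\delta^2$ and integrates in $z$, exactly as you do. Your additional spelling-out of divergence-freeness, periodicity via $1/l\in\frac{2\pi}{l_x}\mathbb{N}$, and the boundary conditions is just the detail the paper leaves implicit.
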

\begin{proof}
All conditions in admissibility are clear, except for the net-flux
constraint which we verify now. Given the above, we find that
\[
\overline{w\xi}=\overline{\partial_{x}\psi\xi}=\chi_{\delta}^{2}\overline{(\Psi')^{2}}=\chi_{\delta}^{2}
\]
for all $z$, so that
\[
\fint_{\Omega}w\xi=\fint_{I_{z}}\overline{w\xi}=\fint_{I_{z}}\chi_{\delta}^{2}=1
\]
as required.
\end{proof}
Next, we estimate the advection term from \prettyref{eq:optimaldesign_w2weng}.
\begin{lem}
The convection roll construction satisfies 
\[
\fint_{\Omega}|\nabla\Delta^{-1}\emph{div}(\mathbf{u}\xi)|^{2}=\int_{I_{z}}|\overline{w\xi}-1|^{2}\lesssim\delta.
\]
In particular, the quadratic form $\mathcal{Q}$ from \prettyref{lem:advection_Fourierests_w2w}
vanishes on it.
\end{lem}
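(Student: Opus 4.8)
The plan is to reduce the advection term to a one-dimensional integral via \prettyref{lem:advection_Fourierests_w2w} and then exploit the special structure of the profile $\Psi=c_{0}\cos$. Since $\mathbf{u}$ is divergence-free, $\text{div}(\mathbf{u}\xi)=\mathbf{u}\cdot\nabla\xi=-\partial_{z}\psi\,\partial_{x}\xi+\partial_{x}\psi\,\partial_{z}\xi$. Substituting $\psi=\chi(z)l^{1/2}\Psi(x/l)$ and $\xi=\chi(z)l^{1/2}\Psi'(x/l)$ and tracking the chain-rule factors of $1/l$, the powers of $l$ cancel and one obtains
\[
\text{div}(\mathbf{u}\xi)=\chi(z)\chi'(z)\bigl[(\Psi'(x/l))^{2}-\Psi(x/l)\Psi''(x/l)\bigr].
\]
For $\Psi=c_{0}\cos$ one has $\Psi''=-\Psi$, so the bracket collapses to $\Psi^{2}+(\Psi')^{2}=c_{0}^{2}$, a constant; with the normalization $\overline{(\Psi')^{2}}=1$ this constant equals $2$, and $\text{div}(\mathbf{u}\xi)=2\chi\chi'=\tfrac{d}{dz}\chi^{2}$, consistent with the relation $\overline{w\xi}=\chi_{\delta}^{2}$ established above while checking admissibility.

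The key point is that $\text{div}(\mathbf{u}\xi)$ depends on $z$ alone, so $\widehat{\text{div}(\mathbf{u}\xi)}_{\mathbf{k}}\equiv0$ for every $\mathbf{k}\neq\mathbf{0}$; hence each $\mathcal{Q}_{\mathbf{k}}$ with $\mathbf{k}\neq\mathbf{0}$ annihilates it, and therefore $\mathcal{Q}(\text{div}(\mathbf{u}\xi))=0$. Invoking \prettyref{lem:advection_Fourierests_w2w} together with the net-flux constraint $\fint_{\Omega}w\xi=1$ then gives
\[
\fint_{\Omega}\left|\nabla\Delta^{-1}\text{div}(\mathbf{u}\xi)\right|^{2}=\fint_{I_{z}}\left|\overline{w\xi}-1\right|^{2}=\int_{I_{z}}\left|\chi_{\delta}^{2}-1\right|^{2},
\]
using $|I_{z}|=1$ and $\overline{w\xi}=\chi_{\delta}^{2}$. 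This proves both the asserted identity and the vanishing of $\mathcal{Q}$.

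Finally I would estimate $\int_{I_{z}}|\chi_{\delta}^{2}-1|^{2}$ directly: the integrand vanishes on $[\delta,1-\delta]$ since $\chi_{\delta}\equiv1$ there, while on the complementary set $[0,\delta]\cup[1-\delta,1]$ of measure $2\delta$ the bound $|\chi_{\delta}|\lesssim1$ gives $|\chi_{\delta}^{2}-1|^{2}\lesssim1$; hence the integral is $\lesssim\delta$. I do not anticipate a genuine obstacle: the whole argument hinges on the algebraic identity $(\Psi')^{2}-\Psi\Psi''=\text{const}$ for $\Psi=c_{0}\cos$, which is exactly what makes $\text{div}(\mathbf{u}\xi)$ purely vertical, forces the nonlocal form $\mathcal{Q}$ to drop out, and reduces the advection term to the elementary bound above. (Had $\Psi$ contained higher harmonics, $\text{div}(\mathbf{u}\xi)$ would acquire nonzero horizontal modes and one would instead have to control $\mathcal{Q}$ through the kernel estimates of \prettyref{lem:Gestimates}.)
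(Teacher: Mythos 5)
Your proposal is correct and follows essentially the same route as the paper: compute $\mathrm{div}(\mathbf{u}\xi)=\mathbf{u}\cdot\nabla\xi$ explicitly, use the identity $(\Psi')^{2}-\Psi\Psi''=c_{0}^{2}$ for $\Psi=c_{0}\cos$ to see that it is a function of $z$ alone so that all nonzero Fourier modes (hence $\mathcal{Q}$) vanish, and then bound the remaining $k=0$ term $\int_{I_z}|\chi_{\delta}^{2}-1|^{2}\lesssim\delta$ using the support and boundedness of $\chi_{\delta}$. Your bookkeeping of the constant (the bracket equals $c_{0}^{2}=2$, giving $\mathrm{div}(\mathbf{u}\xi)=(\chi_{\delta}^{2})'$, consistent with $\overline{w\xi}=\chi_{\delta}^{2}$) is in fact slightly more careful than the paper's, which drops the harmless factor $c_{0}^{2}$.
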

\begin{proof}
We apply \prettyref{lem:advection_Fourierests_w2w}. Note that since
all functions entering into the construction are independent of $y$,
we may work with $k$ in place of $\mathbf{k}=(k_{x},0)$. We start
with the $k=0$ term from \prettyref{lem:advection_Fourierests_w2w}:
it satisfies the estimate
\[
\int_{I_{z}}|\overline{w\xi}-1|^{2}=\int_{I_{z}}|\chi^{2}-1|^{2}\lesssim\delta.
\]
Now we address the $k\neq0$ terms. We must compute the quadratic
form $\mathcal{Q}$ from \prettyref{lem:advection_Fourierests_w2w},
and to do so we must compute $\hat{J}_{k}$ for $k\neq0$ where $J=\mathbf{u}\cdot\nabla\xi$.
Note that by the form of the convection roll construction,
\begin{align*}
J & =\nabla^{\perp}\psi\cdot\nabla\xi=-\partial_{z}\psi\partial_{x}\xi+\partial_{x}\psi\partial_{z}\xi\\
 & =-\left(\chi_{\delta}'(z)\cdot l^{1/2}\Psi(\frac{x}{l})\right)\left(\chi_{\delta}(z)\cdot\frac{1}{l^{1/2}}\Psi''(\frac{x}{l})\right) \\ 
 & \qquad +\left(\chi_{\delta}(z)\frac{1}{l^{1/2}}\Psi'(\frac{x}{l})\right)\left(\chi_{\delta}'(z)\cdot l^{1/2}\Psi'(\frac{x}{l})\right)\\
 & =\frac{1}{2}(\chi_{\delta}^{2})'(z)\cdot\Theta(\frac{x}{l})
\end{align*}
where $\Theta=(\Psi')^{2}-\Psi\Psi''$. Since $\Theta=c_{0}^{2}\sin^{2}+c_{0}^{2}\cos^{2}=c_{0}^{2}$,
\[
J=\frac{1}{2}(\chi_{\delta}^{2})'(z)
\]
which is entirely a function of $z$. This shows that $\hat{J}_{k}=0$
for $k\neq0$. Hence, $\mathcal{Q}$ vanishes on the convection roll
construction. 
\end{proof}
Continuing, we estimate the higher order terms from \prettyref{eq:optimaldesign_w2weng}. 
\begin{lem}
The convection roll construction satisfies 
\[
\fint_{\Omega}|\nabla\xi|^{2}\vee\fint_{\Omega}|\mathbf{u}|^{2}\lesssim\frac{l}{\delta}+\frac{1}{l}.
\]
\end{lem}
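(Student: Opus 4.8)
This is a direct computation, so the plan is simply to differentiate the explicit formulas, separate the $x$- and $z$-dependence by Fubini, and plug in the elementary trigonometric identities for $\Psi=c_0\cos$ together with the two postulated facts about the cutoff $\chi_\delta$. The only structural point worth flagging in advance is the bookkeeping of the powers of $l$: the normalization $l^{1/2}$ in $\psi,\xi$ is engineered precisely to balance the factor $1/l$ produced each time one differentiates the rescaled profile $\Psi(x/l)$ in $x$, and this is what makes the two budgets come out the same, namely $\lesssim l/\delta + 1/l$ (so that the optimization to come will pick $l\sim\delta^{1/2}$). There is no real obstacle; the argument is routine.

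First I would record that since $\tfrac1l\in\tfrac{2\pi}{l_x}\mathbb{N}$, the map $x\mapsto x/l$ wraps $[0,l_x]$ around the circle an integer number of times, so that for any $2\pi$-periodic $g$ one has $\overline{g(\cdot/l)}=\fint_0^{2\pi}g$. Applying this with $g=\Psi^2,(\Psi')^2,(\Psi'')^2$ and using $\Psi''=-\Psi$, $\overline{(\Psi')^2}=1$, and $\overline{\Psi^2}=c_0^2\overline{\cos^2}=c_0^2\overline{\sin^2}=\overline{(\Psi')^2}=1$, I get
\[
\overline{\Psi(\cdot/l)^2}=\overline{\bigl(\Psi'(\cdot/l)\bigr)^2}=\overline{\bigl(\Psi''(\cdot/l)\bigr)^2}=1 .
\]

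Next, for the velocity term, I would compute $\mathbf{u}=(-\partial_z\psi,0,\partial_x\psi)$ explicitly:
\[
-\partial_z\psi=-\chi_\delta'(z)\,l^{1/2}\Psi(x/l),\qquad \partial_x\psi=\chi_\delta(z)\,l^{-1/2}\Psi'(x/l),
\]
so that, integrating first in $(x,y)$,
\[
\fint_\Omega|\mathbf{u}|^2 = l\fint_{I_z}|\chi_\delta'|^2\cdot\overline{\Psi(\cdot/l)^2}\,dz + \frac1l\fint_{I_z}|\chi_\delta|^2\cdot\overline{\bigl(\Psi'(\cdot/l)\bigr)^2}\,dz = l\fint_{I_z}|\chi_\delta'|^2 + \frac1l\fint_{I_z}|\chi_\delta|^2 .
\]
The second $z$-integral is $1$ by assumption on $\chi_\delta$; the first is bounded by noting $\chi_\delta'$ is supported on $[0,\delta]\cup[1-\delta,1]$ with $|\chi_\delta'|\lesssim1/\delta$, whence $\fint_{I_z}|\chi_\delta'|^2\lesssim \delta\cdot\delta^{-2}=\delta^{-1}$. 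This gives $\fint_\Omega|\mathbf{u}|^2\lesssim l/\delta + 1/l$.

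Finally, the estimate for $\xi$ is identical in form. I would compute $\partial_z\xi=\chi_\delta'(z)\,l^{1/2}\Psi'(x/l)$ and $\partial_x\xi=\chi_\delta(z)\,l^{-1/2}\Psi''(x/l)$, so that
\[
\fint_\Omega|\nabla\xi|^2 = l\fint_{I_z}|\chi_\delta'|^2\cdot\overline{\bigl(\Psi'(\cdot/l)\bigr)^2}\,dz + \frac1l\fint_{I_z}|\chi_\delta|^2\cdot\overline{\bigl(\Psi''(\cdot/l)\bigr)^2}\,dz = l\fint_{I_z}|\chi_\delta'|^2 + \frac1l\fint_{I_z}|\chi_\delta|^2,
\]
which by the same two bounds is $\lesssim l/\delta + 1/l$. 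Taking the maximum of the two estimates yields the claim.
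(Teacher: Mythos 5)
Your proof is correct and follows essentially the same route as the paper: a direct computation of $\nabla\psi$ and $\nabla\xi$, separation of variables, the normalization $\overline{\Psi^{2}}=\overline{(\Psi')^{2}}=\overline{(\Psi'')^{2}}=1$, and the bounds $\fint_{I_z}\chi_\delta^2=1$, $\fint_{I_z}(\chi_\delta')^2\lesssim 1/\delta$. The paper merely compresses the $\xi$ case by observing $\fint_\Omega|\nabla\xi|^2\sim\fint_\Omega|\nabla\psi|^2$ since all derivatives of $\Psi$ have comparable $L^2$ norms, which is exactly what your explicit computation verifies.
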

\begin{proof}
Clearly, 
\[
\fint_{\Omega}|\nabla\xi|^{2}\sim\fint_{\Omega}|\nabla\psi|^{2}
\]
since $||\Psi^{(k)}||_{L^{2}(\mathbb{T}_{x})}\sim1$ for all $k$.
Noting that
\[
\fint_{\Omega}|\mathbf{u}|^{2}=\fint_{\Omega}|\nabla\psi|^{2}=\fint_{\Omega}(\chi'_{\delta})^{2}l\Psi^{2}+\chi_{\delta}\frac{1}{l}(\Psi')^{2}\lesssim\frac{l}{\delta}+\frac{1}{l}
\]
we conclude the result.
\end{proof}
Combining the above yields the following estimate on 
\[
E(\epsilon;\delta,l)=\fint_{\Omega}|\nabla\Delta^{-1}\text{div}(\mathbf{u}_{\delta,l}\xi_{\delta,l})|^{2}+\epsilon\fint_{\Omega}|\mathbf{u}_{\delta,l}|^{2}\cdot\fint_{\Omega}|\nabla\xi_{\delta,l}|^{2}.
\]

\begin{cor}
\label{cor:convection_roll_est}The convection roll construction satisfies
\[
E(\epsilon;\delta,l)\lesssim\delta+\epsilon\left(\frac{l}{\delta}+\frac{1}{l}\right)^{2}.
\]
\end{cor}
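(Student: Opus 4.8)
The plan is to assemble the three lemmas just established for the convection roll construction; the corollary is essentially a bookkeeping step. Recalling that
\[
E(\epsilon;\delta,l)=\fint_{\Omega}\left|\nabla\Delta^{-1}\text{div}(\mathbf{u}_{\delta,l}\xi_{\delta,l})\right|^{2}+\epsilon\fint_{\Omega}|\mathbf{u}_{\delta,l}|^{2}\cdot\fint_{\Omega}|\nabla\xi_{\delta,l}|^{2},
\]
I would bound the two summands separately and then add.

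First I would handle the advection term. Invoking the lemma that computes it, the quadratic form $\mathcal{Q}$ vanishes on this construction (since $\Theta=(\Psi')^{2}-\Psi\Psi''=c_{0}^{2}$ makes $J=\tfrac12(\chi_\delta^2)'(z)$ a function of $z$ alone, killing all modes $k\neq0$), so $\fint_{\Omega}|\nabla\Delta^{-1}\text{div}(\mathbf{u}\xi)|^{2}=\int_{I_{z}}|\overline{w\xi}-1|^{2}=\int_{I_{z}}|\chi_{\delta}^{2}-1|^{2}$. Because $\chi_{\delta}=1$ on $[\delta,1-\delta]$ while $|\chi_{\delta}|\lesssim1$ throughout, the integrand is supported on a set of measure $\lesssim\delta$ and uniformly bounded there, so this contribution is $\lesssim\delta$.

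Next I would use the norm estimate $\fint_{\Omega}|\nabla\xi_{\delta,l}|^{2}\vee\fint_{\Omega}|\mathbf{u}_{\delta,l}|^{2}\lesssim\frac{l}{\delta}+\frac{1}{l}$, so the product of the two factors in the second summand is $\lesssim\left(\frac{l}{\delta}+\frac{1}{l}\right)^{2}$ and, multiplied by $\epsilon$, contributes $\lesssim\epsilon\left(\frac{l}{\delta}+\frac{1}{l}\right)^{2}$. Adding the two bounds gives $E(\epsilon;\delta,l)\lesssim\delta+\epsilon\left(\frac{l}{\delta}+\frac{1}{l}\right)^{2}$, which is exactly the claim. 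There is no genuine obstacle here: the substantive work — verifying that $\mathcal{Q}$ annihilates the construction and proving the $L^{2}$ and $\dot H^{1}$ estimates on $\mathbf{u}_{\delta,l}$ and $\xi_{\delta,l}$ — has already been done; the corollary merely records the combined quantitative consequence, to be optimized over $\delta$ and $l$ in the proof of Proposition 4.1.
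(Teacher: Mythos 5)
Your proof is correct and matches the paper's (implicit) argument exactly: the corollary is stated immediately after the two estimate lemmas with only the remark ``combining the above yields,'' and the combination you describe — the advection term bounded by $\delta$ via the vanishing of $\mathcal{Q}$, and the product of the $L^{2}$ and $\dot{H}^{1}$ factors bounded by $\left(\frac{l}{\delta}+\frac{1}{l}\right)^{2}$ — is precisely that assembly.
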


\subsection{Proof of \prettyref{prop:lowerbound_engconstr_w2w}}

Finally we choose $\delta$ and $l$ to prove the desired bound.

\begin{proof}[Proof of \prettyref{prop:lowerbound_engconstr_w2w}]
\prettyref{cor:convection_roll_est} holds for all admissible $\delta$
and $l$, i.e., so long as $\delta\in(0,\frac{1}{2})$ and $l^{-1}\in2\pi l_{x}^{-1}\mathbb{N}$.
To optimize the result, we take 
\[
l\sim\sqrt{\delta}\quad\text{and}\quad\delta\sim\epsilon^{1/2}
\]
which we can do so long as $\epsilon\lesssim1\wedge l_{x}^{4}$. Under
this condition, 
\[
\inf_{\delta,l}\,E(\epsilon;\delta,l)\lesssim\epsilon^{1/2}.
\]

\end{proof}

\section{Enstrophy-constrained transport and branched flow designs \label{sec:enstrophy_constrained_design}}

We now consider the steady enstrophy-constrained wall-to-wall
optimal transport problem in the framework of \prettyref{sec:OptlDesignProb_w2w}.
The main result of this section is a proof of the lower bound from
\prettyref{thm:mainbounds_w2w_ens}. As in the previous section on
energy-constrained transport, we exploit the fact that
the enstrophy-constrained problem
\begin{equation}
\max_{\substack{\mathbf{u}(\mathbf{x})\\
\fint_{\Omega}|\nabla\mathbf{u}|^{2}=Pe^{2}\\
\mathbf{u}|_{\partial\Omega}=\mathbf{0}
}
}\,Nu(\mathbf{u})\label{eq:steady_w2w_enstrophy}
\end{equation}
can be written in integral form as
\begin{equation}
\min_{\substack{\mathbf{u}(\mathbf{x}),\xi(\mathbf{x})\\
\fint_{\Omega}w\xi=1\\
\mathbf{u}|_{\partial\Omega}=\mathbf{0},\xi|_{\partial\Omega}=0
}
}\,\fint_{\Omega}\left|\nabla\Delta^{-1}\text{div}(\mathbf{u}\xi)\right|^{2}+\epsilon\fint_{\Omega}|\nabla\mathbf{u}|^{2}\cdot\fint_{\Omega}|\nabla\xi|^{2}\label{eq:optimaldesign_w2wenstr}
\end{equation}
where $\epsilon=Pe^{-2}$. This form of the problem suggests the possibility
of analyzing a certain multiple scales ansatz for $\mathbf{u}$ and
$\xi$ (we explain the intuition behind this further in \prettyref{sec:Implications-for-RBC}
and \prettyref{sec:energy-driven-pattern-formation}). As proved below,
such an ansatz turns out to capture the scaling of the optimal value
of \prettyref{eq:optimaldesign_w2wenstr} in $\epsilon$ up to possible
logarithmic corrections. The precise statement is as follows:
\begin{prop}
\label{prop:lowerbound_enstrconstr_w2w} Let $\mathfrak{E}(\epsilon)$
denote the optimal value of \prettyref{eq:optimaldesign_w2wenstr}.
Then, 
\[
\epsilon^{1/3}\lesssim\mathfrak{E}(\epsilon)\lesssim\epsilon^{1/3}\log^{4/3}\frac{1}{\epsilon}
\]
whenever $\epsilon\lesssim1$ and $\epsilon\log\frac{1}{\epsilon}\lesssim l_{x}^{6}\wedge l_{y}^{6}$.
\end{prop}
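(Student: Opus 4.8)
The plan is to prove the two inequalities of \prettyref{prop:lowerbound_enstrconstr_w2w} separately. The lower bound $\mathfrak{E}(\epsilon)\gtrsim\epsilon^{1/3}$ is immediate: $\mathfrak{E}(\epsilon)$ is the reciprocal of the optimal value of the steady enstrophy-constrained problem, that value is at most the unsteady supremum, and the latter is $\lesssim Pe^{2/3}$ by \prettyref{thm:mainbounds_w2w_ens}; since $\epsilon=Pe^{-2}$ this gives $\mathfrak{E}(\epsilon)\gtrsim Pe^{-2/3}=\epsilon^{1/3}$.

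The content is the upper bound $\mathfrak{E}(\epsilon)\lesssim\epsilon^{1/3}\log^{4/3}\frac{1}{\epsilon}$, for which I would exhibit an admissible pair $(\mathbf{u},\xi)$ for \prettyref{eq:optimaldesign_w2wenstr} of this energy. First I would fix the branching geometry: working on $[0,\frac{1}{2}]$ and reflecting about $z=\frac{1}{2}$, partition this half into $n$ slabs carrying convection-roll systems at geometrically decreasing horizontal scales $l_{0}=l_{\text{bulk}}>l_{1}>\dots>l_{n-1}=l_{\text{bl}}$ (say $l_{j+1}=l_{j}/2$), separated by transition layers of thicknesses $h_{j}$ in which cells of scale $l_{j}$ split into cells of scale $l_{j+1}$; the coarsest generation fills the bulk near $z=\frac{1}{2}$ and the finest a boundary layer of width $\sim l_{\text{bl}}$. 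On each slab I would use the separated ansatz from \prettyref{sec:energy-constraineddesign}, locally $\psi\approx l_{j}^{1/2}\chi(z)\Psi(x/l_{j})$ (modified inside the transition layers so that $\mathbf{u}$ stays divergence-free, glued continuously, and cut off to vanish on $\partial\Omega$), together with the companion $\xi\approx l_{j}^{1/2}\chi(z)\Psi'(x/l_{j})$ so that the design principle $\overline{w\xi}\approx 1$ holds slab by slab as in \prettyref{sec:energy-constraineddesign}. The scales $l_{j}$, the thicknesses $h_{j}$, and the number of generations $n$ are the free parameters, chosen self-similarly so that the envelope length-scale obeys a parabolic law $\ell(z)\sim\sqrt{1-z}$ away from the wall, and then optimized.

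Next I would estimate the three terms of \prettyref{eq:optimaldesign_w2wenstr} on this construction and reduce the problem to the one-dimensional functional \prettyref{eq:1d_w2w}. The net-flux constraint $\fint_{\Omega}w\xi=1$ is arranged by normalization. For the advection term I would apply \prettyref{lem:advection_Fourierests_w2w}: its $\mathbf{k}=0$ part $\fint_{I_{z}}|\overline{w\xi}-\fint_{\Omega}w\xi|^{2}$ is supported in the transition layers and in the near-wall cutoff, and summing its contributions reproduces the terms $l_{\text{bl}}+\int(\ell')^{2}\,dz$ of \prettyref{eq:1d_w2w}; the remaining piece is the nonlocal form $\mathcal{Q}(\text{div}(\mathbf{u}\xi))$, which vanishes slab by slab by the exact cancellation used in \prettyref{sec:energy-constraineddesign} (again because $(\Psi')^{2}-\Psi\Psi''$ is constant), and whose cross-generation interactions are controlled by the kernel estimates of \prettyref{lem:Gestimates} since $G_{\mathbf{k}}$ decays like $e^{-|\mathbf{k}||z-z'|}$ while the horizontal frequencies of different generations separate geometrically. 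The higher-order term $\epsilon\fint_{\Omega}|\nabla\mathbf{u}|^{2}\cdot\fint_{\Omega}|\nabla\xi|^{2}$ reduces by direct computation to $\epsilon(l_{\text{bulk}}^{-2}+\int\ell^{-2}\,dz+l_{\text{bl}}^{-1})^{2}$. Minimizing the resulting \prettyref{eq:1d_w2w} yields $\ell\sim\epsilon^{1/6}\log^{1/6}\frac{1}{\epsilon}\,\sqrt{1-z}$, $l_{\text{bulk}}\sim\epsilon^{1/6}\log^{1/6}\frac{1}{\epsilon}$, $l_{\text{bl}}\sim\epsilon^{1/3}\log^{1/3}\frac{1}{\epsilon}$, with value $\sim\epsilon^{1/3}\log^{4/3}\frac{1}{\epsilon}$; the hypothesis $\epsilon\log\frac{1}{\epsilon}\lesssim l_{x}^{6}\wedge l_{y}^{6}$ is exactly what lets the coarsest rolls of scale $l_{\text{bulk}}$ fit inside $\mathbb{T}_{xy}^{2}$, and $n\sim\log\frac{1}{\epsilon}$ generations suffice.

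The step I expect to be the main obstacle is controlling the nonlocal advection term $\mathcal{Q}(\text{div}(\mathbf{u}\xi))$ uniformly across the whole multi-scale design. Within a single slab the separated structure kills the $\mathbf{k}\neq 0$ modes exactly, but in the transition layers $\text{div}(\mathbf{u}\xi)$ picks up genuinely nonzero high horizontal modes, and through $\Delta^{-1}$ these could in principle couple distant generations. Organizing the construction so that these couplings sum to something $\lesssim\epsilon^{1/3}\log^{4/3}\frac{1}{\epsilon}$ — by placing the transition layers at the self-similar locations dictated by \prettyref{eq:1d_w2w} and combining the exponential kernel decay of \prettyref{lem:Gestimates} with the geometric separation of scales — is the heart of the argument. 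It is also the mechanism behind the logarithmic loss: the streamlines must refine gradually enough that the bending cost $\int(\ell')^{2}\,dz\sim\epsilon^{1/3}\log^{4/3}\frac{1}{\epsilon}$ cannot be circumvented.
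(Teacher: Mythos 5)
Your proposal follows essentially the same route as the paper: the lower bound via the \emph{a priori} estimate $Nu\lesssim Pe^{2/3}$, and the upper bound via a period-doubling branching construction with $\xi$ slaved to $w$, whose cost is reduced through \prettyref{lem:advection_Fourierests_w2w} and \prettyref{lem:Gestimates} to the one-dimensional functional \prettyref{eq:1d_w2w} and then optimized with $\ell(z)\sim\epsilon^{1/6}\log^{1/6}\frac{1}{\epsilon}\sqrt{1-z}$. Two clarifications on the accounting of the advection term. First, the $\mathbf{k}=0$ piece $\fint_{I_{z}}|\overline{w\xi}-1|^{2}$ contributes only $\lesssim l_{\text{bl}}$: the Pythagorean condition $f(t)^{2}+f(1-t)^{2}=1$ on the transition cutoffs makes $\overline{w\xi}\equiv1$ throughout the bulk and every transition layer, so the only defect sits in the near-wall cutoff. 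The term $\int(\ell')^{2}\,dz$ comes from the $\mathbf{k}\neq0$ neighbor interactions, $\mathcal{Q}_{k_{j}^{\text{sum}}}\vee\mathcal{Q}_{k_{j}^{\text{diff}}}\lesssim l_{j}^{2}/\delta_{j}$, not from the mean mode as stated in your third paragraph (your final paragraph has the right attribution). Second, the cross-generation coupling you single out as the main obstacle does not arise in the paper's construction: with $l_{k+1}=l_{k}/2$ the wavenumbers $k_{j}^{\text{sum}}=l_{j+1}^{-1}+l_{j}^{-1}$ and $k_{j}^{\text{diff}}=l_{j+1}^{-1}-l_{j}^{-1}$ generated by distinct transition layers are pairwise distinct (an equality $k_{i}^{\text{sum}}=k_{j}^{\text{diff}}$ would force $3\cdot2^{i}=2^{j}$), and since $\Delta^{-1}$ is diagonal in the horizontal Fourier basis each $\mathcal{Q}_{k}$ sees exactly one transition layer; the kernel bounds of \prettyref{lem:Gestimates} are then applied locally on that layer's support rather than to control interactions between generations. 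With these corrections your outline matches the paper's proof.
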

The \emph{a priori }lower bound $\mathfrak{E}(\epsilon)\gtrsim\epsilon^{1/3}$
is implied by the upper bound $Nu\lesssim Pe^{2/3}$ from \prettyref{thm:mainbounds_w2w_ens}.
(For a proof which is more self-contained, see the discussion surrounding
\prettyref{eq:Howard_LB}.) To prove the upper bound $\mathfrak{E}(\epsilon)\lesssim\epsilon^{1/3}\log^{4/3}\frac{1}{\epsilon}$
we must construct a suitable class of designs and estimate their heat
transport. The successful ones are as depicted in \prettyref{fig:branchingrough}.
In contrast with the convection roll designs considered previously,
such ``branching'' designs are evidently more complicated to analyze.
The main challenge of course lies with estimating the advection term.
Here, we make use of \prettyref{lem:advection_Fourierests_w2w} and
\prettyref{lem:Gestimates}. Note the requirement that $\epsilon$
be small enough is to ensure that our construction fits into the given
domain. As in the previous section, we need only consider the case
$l_{x}\leq l_{y}$ by symmetry. 

Combined with the results of \prettyref{sec:OptlDesignProb_w2w},
\prettyref{prop:lowerbound_enstrconstr_w2w} completes the proof of
the lower bound from \prettyref{thm:mainbounds_w2w_eng}. 

\subsection{The branching construction\label{sec:branchingconstruction}}

The integral formulation \prettyref{eq:optimaldesign_w2wenstr} requires
the construction of a divergence-free velocity field $\mathbf{u}$
and a test function $\xi$ (the latter of which plays the role of
temperature in this approach). For the velocity fields, we will use
a streamfunction $\psi(x,z)$ whose streamlines are as in \prettyref{fig:branchingrough}.
That figure can be thought of as consisting of many individual convection
roll systems which have been carefully fit together. In the bulk,
there are large anisotropic convection rolls at some horizontal lengthscale
$l_{\text{bulk}}$. At the walls, there are much smaller isotropic convection
rolls at some other lengthscale $l_{\text{bl}}\ll l_{\text{bulk}}$. Between the
bulk and the walls, streamlines branch and refine through several
transition layers, a single one of which is shown in \prettyref{fig:transitionlayer}.
As the construction is symmetric about $z=1/2$, we only need describe
it for $z\in[\frac{1}{2},1]$.

Counting upwards from the bulk, we understand by the $j$th transition
layer that part of the domain where $z\in[z_{j},z_{j+1}]$. The points
$\{z_{j}\}_{j=1}^{n}$ marking the edges of the layers satisfy
\begin{equation}
\frac{1}{2}<z_{\text{bulk}}=z_{1}<z_{2}<\cdots<z_{n}=z_{\text{bl}}<1.\label{eq:zk_inequalities}
\end{equation}
At the horizontal slice $\{z=z_{j}\}$ the velocity components fluctuate
at lengthscale $l_{j}$. These decrease monotonically according as
\begin{equation}
l_{\text{bulk}}=l_{1}>l_{2}>\cdots>l_{n}=l_{\text{bl}}.\label{eq:lk_inequalities}
\end{equation}
In what follows, we think of the parameters $\{z_{j}\}_{j=1}^{n}$
and $\{l_{j}\}_{j=1}^{n}$ as playing a distinguished role in specifying
the branching design.

Given such a streamfunction $\psi$ and its corresponding two-dimensional
velocity field
\[
\mathbf{u}=(-\partial_{z}\psi,0,\partial_{x}\psi),
\]
we must choose a ``temperature'' field $\xi$ well-suited to the
minimization \prettyref{eq:optimaldesign_w2wenstr}. Recall the design
principle \prettyref{eq:design principle} discussed in \prettyref{sec:OptlDesignProb_w2w},
which requires that 
\[
\overline{w\xi}\approx\fint_{\Omega}w\xi=1
\]
throughout the domain. For our purposes, it will suffice to set
\begin{equation}
\xi=w\label{eq:pointwise_slaving}
\end{equation}
and enforce that $\overline{w^{2}}\approx1$. Such considerations
significantly constrain the way that streamlines may branch. We note
that while \prettyref{eq:pointwise_slaving} may not necessarily hold
for optimal designs, it greatly simplifies the ensuing analysis. And,
as claimed in \prettyref{prop:lowerbound_enstrconstr_w2w} and proved
below, such a choice introduces at most a logarithmic error in our
estimates of enstrophy-constrained optimal transport.

\begin{figure}
\centering

\includegraphics[width=0.6\paperwidth,height=0.15\paperheight]{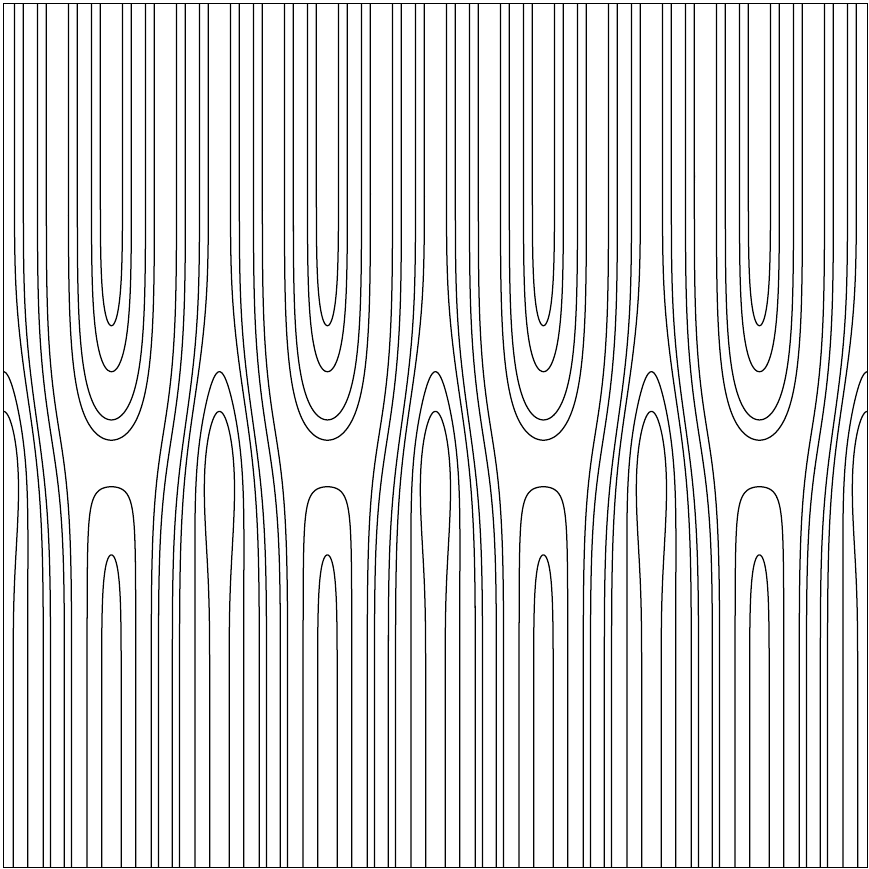}

\caption{Streamlines from a period doubling transition layer. \label{fig:transitionlayer}}
\end{figure}
%
%

We are now ready to give the precise functional form of our branching
construction. Let points $\{z_{j}\}_{j=1}^{n}$ satisfying \prettyref{eq:zk_inequalities}
and lengthscales $\{l_{j}\}_{j=1}^{n}$ satisfying 
\[
\frac{1}{l_{j}}\in\frac{2\pi}{l_{x}}\mathbb{N}
\]
and \prettyref{eq:lk_inequalities} be given. Let
\begin{equation}
\Psi(x)=c_{0}\cos x\label{eq:masterstreamfunction}
\end{equation}
where $c_{0}$ is chosen so that $\overline{(\Psi')^{2}}=1$. We define
$\{\psi_{j}\}_{j=1}^{n}$ by
\[
\psi_{j}(x)=l_{j}\Psi(\frac{x}{l_{j}}),
\]
and set
\[
\psi(x,z)=\psi_{1}(x)\quad\text{for }z\in[\frac{1}{2},z_{1}]
\]
which corresponds to the bulk. In the boundary layer, we set
\[
\psi(x,z)=g\left(\frac{z-z_{n}}{1-z_{n}}\right)\psi_{n}(x)\quad\text{for }z\in[z_{n},1].
\]
Here, $g\in C^{\infty}([0,1])$ is a fixed cutoff function satisfying
the matching conditions $g(0)=1$, $g(1)=0$, $g'(0)=g'(1)=0$, as
well as the integral condition
\begin{equation}
\int_{0}^{1}(g(t))^{2}\,dt=1.\label{eq:squareintegralisone_for_g}
\end{equation}
In the $j$th transition layer we take
\[
\psi(x,z)=f\left(\frac{z-z_{j}}{z_{j+1}-z_{j}}\right)\psi_{j}(x)+f\left(\frac{z_{j+1}-z}{z_{j+1}-z_{j}}\right)\psi_{j+1}(x)\quad\text{for }z\in[z_{j},z_{j+1}]
\]
where $f\in C^{\infty}([0,1])$ is a second fixed cutoff function.
We require it to satisfy the Pythagorean condition
\begin{equation}
(f(t))^{2}+(f(1-t))^{2}=1\quad\text{for }t\in[0,1]\label{eq:pythagorean_conditions}
\end{equation}
as well as the matching conditions $f(0)=1$, $f(1)=0$, and $f'(0)=f'(1)=0$.
One might choose, for instance, 
\[
f(t)=\sqrt{\frac{1}{2}-\frac{1}{2}\tanh\left(\tau\cdot\frac{t-\frac{1}{2}}{t^{2}(1-t)^{2}}\right)}\quad\text{for }0<t<1
\]
for some $\tau\in(0,\infty)$. \prettyref{fig:cutofffunction} shows
such a cutoff function. This completes our construction of a general
class of branching designs; we turn now to select an optimal one for
use in \prettyref{eq:optimaldesign_w2wenstr}.

\begin{figure}
\centering

\includegraphics[height=0.15\paperheight]{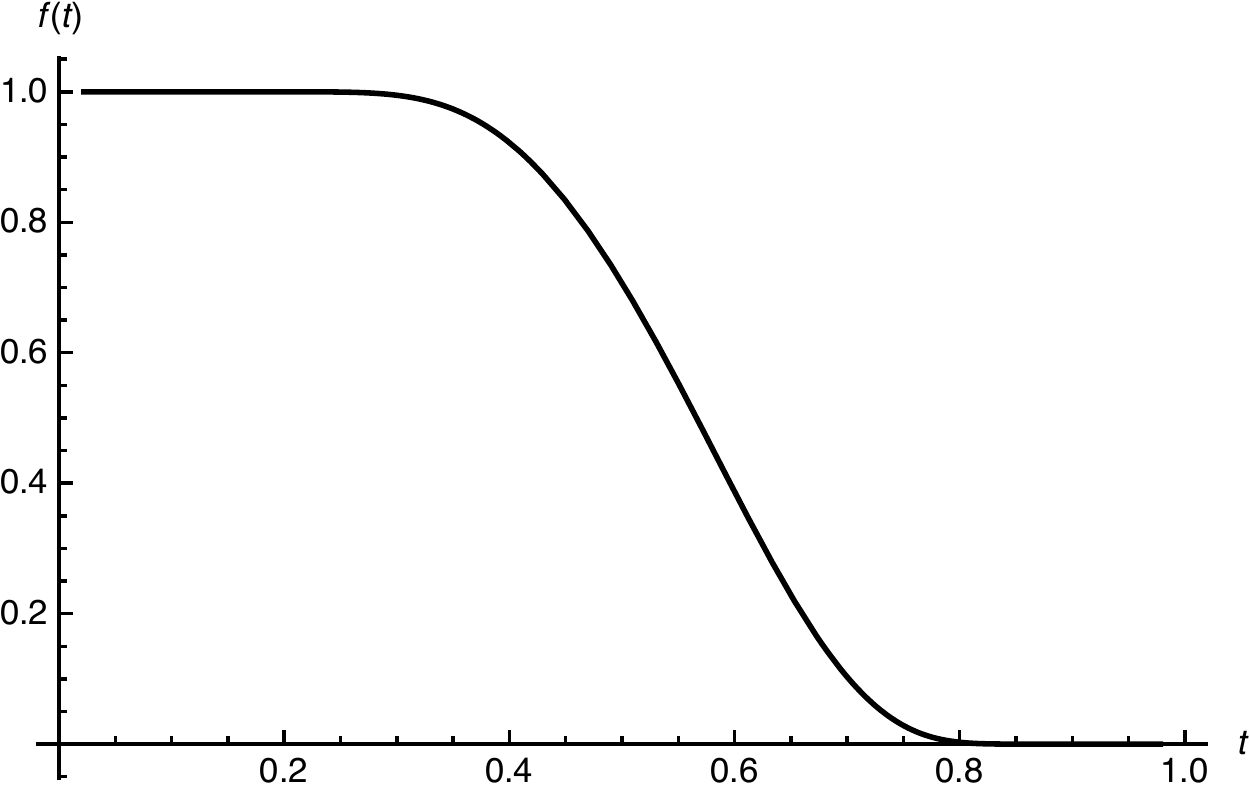}

\caption{The cutoff function $f(t)$ used in the branching construction. \label{fig:cutofffunction}}
\end{figure}
%
%

\subsection{Admissibility\label{sec:Admissibility} }

Our first task is to check the admissibility of this construction.
The Fourier series of $\psi$ can be written in the form 
\begin{equation}
\psi=\sum_{j=1}^{n}\chi_{j}(z)\psi_{j}(x),\label{eq:branchingstreamfunction_w2w}
\end{equation}
where the amplitude functions $\{\chi_{j}\}_{j=1}^{n}$ belong to
$H^{2}(I_{z})\cap L^{\infty}(I_{z})$. (Such regularity is guaranteed
by the definition of the cutoff functions $f$ and $g$.) The support
of the $j$th amplitude function satisfies
\[
\text{supp}\,\chi_{j}\subset[z_{j-1},z_{j+1}]\cup[1-z_{j+1},1-z_{j-1}]
\]
and the expansion is nearly diagonal in the sense that 
\[
\chi_{j}\chi_{j'}\not\equiv0\iff|j-j'|\leq1.
\]
Thus,
\begin{equation}
\text{supp}\,\chi_{j}\chi_{j+1}\subset[z_{j},z_{j+1}]\cup[1-z_{j+1},1-z_{j}]\quad\forall\,j\label{eq:cross_support}
\end{equation}
and the same is true for products of derivatives thereof. Note also
that by \prettyref{eq:pythagorean_conditions},
\begin{equation}
\sum_{j=1}^{n}\chi_{j}^{2}=1\quad z\in[1-z_{\text{bl}},z_{\text{bl}}]\label{eq:chisquaresum}
\end{equation}
and by \prettyref{eq:squareintegralisone_for_g},
\begin{equation}
\fint_{z_{\text{bl}}}^{1}\chi_{n}^{2}\,dz=1.\label{eq:blsquareintegral_is_one}
\end{equation}
 
\begin{lem}
The branching construction defined above is admissible for \prettyref{eq:optimaldesign_w2wenstr}. 
\end{lem}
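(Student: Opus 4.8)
The plan is to verify, one at a time, that the pair $(\mathbf{u},\xi)$ just constructed satisfies every requirement defining the admissible class in \prettyref{eq:optimaldesign_w2wenstr}: that $\mathbf{u}$ is divergence-free, has finite mean enstrophy, and obeys $\mathbf{u}|_{\partial\Omega}=\mathbf{0}$; that $\xi$ has finite mean Dirichlet energy, is essentially bounded, and obeys $\xi|_{\partial\Omega}=0$; and that the net-flux identity $\fint_{\Omega}w\xi=1$ holds. All of these except the net-flux identity follow immediately from the form of the construction, so the substantive content is the last point.

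For regularity, I would first observe that each amplitude function $\chi_j$ appearing in the Fourier representation \prettyref{eq:branchingstreamfunction_w2w} lies in $H^{2}(I_z)\cap L^{\infty}(I_z)$, hence — by the one-dimensional Sobolev embedding $H^{2}(I_z)\hookrightarrow C^{1}(\overline{I_z})$ — in $C^{1}(\overline{I_z})$, while the profiles $\psi_j(x)=l_j\Psi(x/l_j)$ are smooth and periodic on $\mathbb{T}_x$. Since the sum in \prettyref{eq:branchingstreamfunction_w2w} is finite, $\psi\in H^{2}(\Omega)\cap C^{1}(\overline{\Omega})$; consequently $\mathbf{u}=(-\partial_z\psi,0,\partial_x\psi)$ is divergence-free with $\mathbf{u}\in H^{1}(\Omega;\mathbb{R}^{3})\cap C(\overline{\Omega})$, so $\fint_{\Omega}|\nabla\mathbf{u}|^{2}<\infty$, and likewise $\xi=w=\partial_x\psi\in H^{1}(\Omega)\cap L^{\infty}(\Omega)$, so $\fint_{\Omega}|\nabla\xi|^{2}<\infty$. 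The boundary conditions then come from the matching conditions on the cutoffs: in the boundary layer $z\in[z_n,1]$ one has $\psi(x,z)=g\!\left(\tfrac{z-z_n}{1-z_n}\right)\psi_n(x)$, and $g(1)=g'(1)=0$ forces $\psi$, $\partial_z\psi$, and $\partial_x\psi$ all to vanish at $z=1$; by the reflection symmetry of the construction about $z=1/2$ the same holds at $z=0$. Hence $\mathbf{u}|_{\partial\Omega}=\mathbf{0}$, and since $\xi=w=\partial_x\psi$, also $\xi|_{\partial\Omega}=0$.

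The one genuine computation is the net-flux constraint, which with the slaving choice \prettyref{eq:pointwise_slaving} reads $\fint_{\Omega}w^{2}=1$. Writing $w=\partial_x\psi=\sum_{j=1}^{n}\chi_j(z)\,\Psi'(x/l_j)$ and averaging over the periodic variable $x$, every cross term $\overline{\Psi'(x/l_j)\Psi'(x/l_{j'})}$ with $j\neq j'$ vanishes: by \prettyref{eq:masterstreamfunction} $\Psi'=-c_0\sin$, and by \prettyref{eq:lk_inequalities} the wavenumbers $1/l_j$ and $1/l_{j'}$ are distinct elements of $\tfrac{2\pi}{l_x}\mathbb{N}$, so the two modes are $L^{2}(\mathbb{T}_x)$-orthogonal; the diagonal terms contribute $\chi_j^{2}\,\overline{(\Psi')^{2}}=\chi_j^{2}$ since $c_0$ was chosen so that $\overline{(\Psi')^{2}}=1$. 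Thus $\overline{w^{2}}(z)=\sum_{j=1}^{n}\chi_j(z)^{2}$, and it remains to integrate in $z$. On the symmetric interval $[1-z_{\text{bl}},z_{\text{bl}}]$ the Pythagorean property gives $\sum_j\chi_j^{2}\equiv1$ via \prettyref{eq:chisquaresum}, contributing $2z_{\text{bl}}-1$; on $[z_{\text{bl}},1]$ only $\chi_n$ survives and \prettyref{eq:blsquareintegral_is_one} yields $\int_{z_{\text{bl}}}^{1}\chi_n^{2}=1-z_{\text{bl}}$, with the same value on $[0,1-z_{\text{bl}}]$ by symmetry. Summing, $\int_{I_z}\overline{w^{2}}=(2z_{\text{bl}}-1)+2(1-z_{\text{bl}})=1=|I_z|$, i.e.\ $\fint_{\Omega}w\xi=1$, as required.

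I do not expect any real obstacle in this lemma; the only step asking for care is the net-flux computation, where one must be sure that every off-diagonal Fourier product in $\overline{w^{2}}$ integrates to zero and that the resulting $z$-integral of $\sum_j\chi_j^{2}$ splits cleanly over the symmetric bulk-plus-transition region and the two boundary-layer pieces. The genuinely delicate estimates for the branching construction — controlling the advection term through \prettyref{lem:advection_Fourierests_w2w} and \prettyref{lem:Gestimates}, and bounding the higher-order enstrophy and Dirichlet-energy factors — are deferred to the subsections that follow.
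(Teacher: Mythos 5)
Your proof is correct and follows essentially the same route as the paper: it reduces the net-flux constraint to $\int_{I_z}\sum_j\chi_j^2\,dz=1$ via the $L^2(\mathbb{T}_x)$-orthonormality of the profiles $\{\psi_j'\}$, then evaluates that integral using the Pythagorean identity \prettyref{eq:chisquaresum} on $[1-z_{\text{bl}},z_{\text{bl}}]$ and the normalization \prettyref{eq:blsquareintegral_is_one} in the boundary layers. The only difference is that you spell out the regularity and boundary-condition checks that the paper dismisses as clear; your arithmetic and the orthogonality argument both match.
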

\begin{proof}
That the boundary conditions for $\mathbf{u}$ and $\xi$ are met
is clear. Here, we check that the net-flux constraint $\fint_{\Omega}w\xi=1$
is satisfied. Since
\[
\partial_{x}\psi=\sum_{j=1}^{n}\chi_{j}\psi_{j}'
\]
and $\{\psi_{j}'\}_{j=1}^{n}$ forms an $L^{2}$-orthonormal set,
we can write that
\[
\fint_{\Omega}w\xi=\fint_{\Omega}(\partial_{x}\psi)^{2}=\int_{I_{z}}\sum_{j=1}^{n}\chi_{j}^{2}\,dz.
\]
By \prettyref{eq:chisquaresum}, the integrand is equal to one for
$z\in[1-z_{\text{bl}},z_{\text{bl}}]$. Since 
\[
\int_{z_{\text{bl}}}^{1}\chi_{n}^{2}\,dz=1-z_{\text{bl}}
\]
by \prettyref{eq:blsquareintegral_is_one}, we conclude that
\[
\fint_{\Omega}w\xi=2\left(\int_{\frac{1}{2}}^{z_{\text{bl}}}+\int_{z_{\text{bl}}}^{1}\right)\sum_{j=1}^{n}\chi_{j}^{2}\,dz=2\left(z_{\text{bl}}-\frac{1}{2}+1-z_{\text{bl}}\right)=1.
\]
\end{proof}
Next, we record some technical requirements that will greatly simplify
the identification of an optimal branching construction. These requirements
are compatible with the upper bound from \prettyref{prop:lowerbound_enstrconstr_w2w},
and we have not been able to improve upon the scaling of this result
by removing them. First, we require the transition layer thicknesses
$\{\delta_{k}\}_{k=1}^{n}$, which are defined by
\[
\delta_{k}=z_{k+1}-z_{k}\quad1\leq k\leq n-1\quad\text{and}\quad\delta_{\text{bl}}=\delta_{n}=1-z_{n},
\]
and the horizontal lengthscales $\{l_{k}\}_{k=1}^{n}$ to satisfy
the relations
\begin{equation}
\delta_{1}\gtrsim\delta_{2}\gtrsim\cdots\gtrsim\delta_{n}\label{eq:assumption2-0}
\end{equation}
and
\begin{equation}
l_{k}\lesssim\delta_{k}\quad1\leq k\leq n-1\qquad\text{and}\qquad\delta_{n}\sim l_{n}.\label{eq:assumption2-1}
\end{equation}
The latter guarantees that a certain anisotropy is present throughout
the construction which will simplify, amongst other things, the estimation
of the higher order terms from \prettyref{eq:optimaldesign_w2wenstr}.
Second, we require that 
\begin{equation}
\delta_{\text{bulk}}=z_{\text{bulk}}-\frac{1}{2}\sim1.\label{eq:assumption3}
\end{equation}
Note the constants implicit in \prettyref{eq:assumption2-0}-\prettyref{eq:assumption3}
are not allowed to depend on any parameters. Third, we require that
the refinement of lengthscale through each transition layer occur
by period doubling, i.e., 
\begin{equation}
l_{k+1}=\frac{1}{2}l_{k}\quad1\leq k\leq n-1.\label{eq:l_relations}
\end{equation}
This last requirement will serve to simplify the Fourier analysis
involved in estimating the non-local advection term.

\subsection{Estimating the efficiency of branching\label{sec:Energy-estimates}}

In this section we estimate each of the terms from \prettyref{eq:optimaldesign_w2wenstr}
for the branching construction. The requirements laid out in \prettyref{sec:branchingconstruction}
and \prettyref{sec:Admissibility} are understood to hold. The constants
appearing below are only allowed to depend on those implicit in \prettyref{eq:assumption2-0}-\prettyref{eq:assumption3},
and so do not depend on any parameters.

First, we deal with the advection term. By \prettyref{lem:advection_Fourierests_w2w},
\[
\fint_{\Omega}|\nabla\Delta^{-1}\text{div}(\mathbf{u}\xi)|^{2}=\fint_{I_{z}}|\overline{w\xi}-1|^{2}\,dz+\mathcal{Q}(\mathbf{u}\cdot\nabla\xi)
\]
where $\mathcal{Q}=\sum_{\mathbf{k}\neq\mathbf{0}}\mathcal{Q}_{\mathbf{k}}$
and 
\[
\mathcal{Q}_{\mathbf{k}}(f)=\fint_{I_{z}\times I_{z'}}G_{\mathbf{k}}(z,z')\hat{f}_{\mathbf{k}}(z)\hat{f}_{\mathbf{k}}^{*}(z')\,dzdz'.
\]
As the construction is two-dimensional, $k_{y}$ does not play a role.
For ease of reading, we denote $\mathbf{k}=(k_{x},0)$ simply by $k$
in what follows. 
\begin{lem}
\label{lem:diagonal_interactions} The branching construction satisfies
\[
\mathcal{Q}(\mathbf{u}\cdot\nabla\xi)=\sum_{i=1}^{n-1}\mathcal{Q}_{k_{i}^{\emph{sum}}}(\mathbf{u}\cdot\nabla\xi)+\mathcal{Q}_{k_{i}^{\emph{diff}}}(\mathbf{u}\cdot\nabla\xi)
\]
where
\[
k_{i}^{\emph{sum}}=\frac{1}{l_{i+1}}+\frac{1}{l_{i}}\quad\text{and}\quad k_{i}^{\emph{diff}}=\frac{1}{l_{i+1}}-\frac{1}{l_{i}}\quad\text{for }1\leq i\leq n-1.
\]
\end{lem}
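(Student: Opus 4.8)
The plan is to write the source $J := \mathbf{u}\cdot\nabla\xi$ as an explicit trigonometric series in the horizontal variable $x$ and simply read off which wavenumbers carry nonzero weight. Since $\xi = w = \partial_x\psi$ by \prettyref{eq:pointwise_slaving}, since $\psi = \sum_{j=1}^{n}\chi_j(z)\psi_j(x)$ with $\psi_j(x) = l_j\Psi(x/l_j)$, and since the whole construction is independent of $y$, we have
\[
J \;=\; -\partial_z\psi\,\partial_x\xi + \partial_x\psi\,\partial_z\xi \;=\; \sum_{i,j=1}^{n}\Bigl[-\chi_i'(z)\chi_j(z)\,\psi_i(x)\psi_j''(x) + \chi_i(z)\chi_j'(z)\,\psi_i'(x)\psi_j'(x)\Bigr].
\]
First I would dispose of the diagonal terms $i=j$: exactly as in the convection-roll computation of \prettyref{sec:energy-constraineddesign}, the elementary identity $(\psi_i')^2 - \psi_i\psi_i'' \equiv c_0^2$, valid since $\Psi = c_0\cos$, shows that the $i=j$ summand equals $\tfrac12 c_0^2(\chi_i^2)'(z)$ and hence is independent of $x$. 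The diagonal part of $J$ therefore lives entirely in the zeroth Fourier mode and is invisible to $\mathcal{Q} = \sum_{\mathbf{k}\neq\mathbf{0}}\mathcal{Q}_{\mathbf{k}}$; it is already accounted for by the term $\fint_{I_z}|\overline{w\xi}-1|^2$ in \prettyref{lem:advection_Fourierests_w2w}.

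Next I would handle the off-diagonal terms $i\neq j$. By the near-diagonality of the amplitude functions established in \prettyref{sec:Admissibility} --- that $\chi_i\chi_j\not\equiv 0$ if and only if $|i-j|\le 1$, and likewise for products of their derivatives --- only the nearest-neighbor pairs $\{i,i+1\}$ with $1\le i\le n-1$ contribute. For such a pair, each surviving summand has $x$-dependence given by a product of one trigonometric function of $x/l_i$ with one of $x/l_{i+1}$ (each factor is a sine or cosine, according to whether $\psi_m$, $\psi_m'$, or $\psi_m''$ appears). By the product-to-sum identities, any such product is a combination of $\cos$ and $\sin$ at exactly the two frequencies $1/l_{i+1}+1/l_i = k_i^{\text{sum}}$ and $1/l_{i+1}-1/l_i = k_i^{\text{diff}}$, together with their negatives. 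Hence the Fourier support of $J$ (in $k$) is contained in $\{0\}\cup\{\pm k_i^{\text{sum}},\,\pm k_i^{\text{diff}} : 1\le i\le n-1\}$.

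To conclude, I would invoke the period-doubling requirement \prettyref{eq:l_relations}: it gives $1/l_i = 2^{i-1}/l_1$, so $k_i^{\text{diff}} = 2^{i-1}/l_1$ and $k_i^{\text{sum}} = 3\cdot 2^{i-1}/l_1$, and in particular the $2(n-1)$ wavenumbers so produced are all distinct and strictly positive --- the sets $\{2^{i-1} : 1\le i\le n-1\}$ and $\{3\cdot 2^{i-1} : 1\le i\le n-1\}$ being disjoint because a power of $2$ is never three times a power of $2$. Splitting the finite sum $\mathcal{Q}(J) = \sum_{\mathbf{k}\neq\mathbf{0}}\mathcal{Q}_{\mathbf{k}}(J)$ over this support, and using that $\mathcal{Q}_{\mathbf{k}}$ depends only on $|\mathbf{k}|$ while $\hat{J}_{-\mathbf{k}} = \hat{J}_{\mathbf{k}}^{*}$ (so that the $+k$ and $-k$ terms coincide and are collected under the scalar notation $\mathcal{Q}_{k}$), gives precisely the asserted identity. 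I expect the only point of substance to be this last non-collision step: for a general refinement schedule the sum- and difference-wavenumbers from distinct transition layers could overlap, spoiling the clean layer-by-layer decomposition, and it is exactly to prevent this that \prettyref{eq:l_relations} was imposed --- so the ``hard part'' is really just the combinatorial bookkeeping behind the period-doubling choice, the trigonometry itself being identical to the convection-roll calculation.
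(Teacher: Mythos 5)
Your proposal is correct and follows essentially the same route as the paper: expand $J=\nabla^{\perp}\psi\cdot\nabla\partial_{x}\psi$ using the near-diagonal support of the $\chi_{j}$, kill the diagonal terms with the identity $(\Psi')^{2}-\Psi\Psi''\equiv c_{0}^{2}$, reduce the nearest-neighbor terms to the frequencies $k_{i}^{\text{sum}},k_{i}^{\text{diff}}$ by product-to-sum identities, and use the period-doubling choice \prettyref{eq:l_relations} to rule out collisions (the paper's version of your ``$3\cdot2^{i}\neq2^{j}$'' observation). Your additional remarks on the $\pm k$ pairing and on the mutual distinctness of the $k_{i}^{\text{diff}}$ among themselves are correct refinements of the same bookkeeping, not a different argument.
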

\begin{proof}
Using the formula for the branching construction given in \prettyref{eq:branchingstreamfunction_w2w},
\begin{align*}
J & =\nabla^{\perp}\psi\cdot\nabla\partial_{x}\psi=\sum_{j,j'}\nabla^{\perp}(\chi_{i}\psi_{i})\cdot\nabla\partial_{x}(\chi_{j}\psi_{j})=\sum_{|j-j'|\leq1}\nabla^{\perp}(\chi_{j}\psi_{j})\cdot\nabla\partial_{x}(\chi_{j'}\psi_{j'})\\
 & =\sum_{j=1}^{n}\nabla^{\perp}(\chi_{j}\psi_{j})\cdot\nabla\partial_{x}(\chi_{j}\psi_{j}) \\ 
 &\qquad +\sum_{j=1}^{n-1}\nabla^{\perp}(\chi_{j}\psi_{j})\cdot\nabla\partial_{x}(\chi_{j+1}\psi_{j+1})+\nabla^{\perp}(\chi_{j+1}\psi_{j+1})\cdot\nabla\partial_{x}(\chi_{j}\psi_{j})\\
 & =J_{\text{self}}+J_{\text{nbr}}.
\end{align*}
Given our choice of fundamental streamfunction \prettyref{eq:masterstreamfunction},
these expressions can be made explicit and we do so now.

The general term in the first sum, $J_{\text{self}}$, satisfies 
\begin{align*}
\nabla^{\perp}(\chi_{j}\psi_{j})\cdot\nabla\partial_{x}(\chi_{j}\psi_{j}) & =(-\chi_{j}'\psi_{j},\chi_{j}\psi_{j}')\cdot(\chi_{j}\psi_{j}'',\chi_{j}'\psi_{j}')\\
 & =-\chi_{j}'\chi_{j}\psi_{j}\psi_{j}''+\chi_{j}\chi_{j}'\psi_{j}'\psi_{j}'=\frac{1}{2}(\chi_{j}^{2})'(z)\cdot\Theta(\frac{x}{l_{j}})
\end{align*}
where $\Theta=(\Psi')^{2}-\Psi\Psi''$. Using \prettyref{eq:masterstreamfunction},
we see that $\Theta=c_{0}^{2}\left(\cos^{2}+\sin^{2}\right)=c_{0}^{2}$
so that
\begin{equation}
J_{\text{self}}=c_{0}^{2}\sum_{i=1}^{n}\left(\chi_{i}^{2}\right)'.\label{eq:self-interactions}
\end{equation}
In particular, we find that $J_{\text{self}}$ is constant in the periodic
variable $x$, so that the Fourier coefficient $[J_{\text{self}}]_{k}^{\wedge}$
vanishes identically except for when $k=0$. For $k=0$, note that
\[
[J_{\text{self}}]_{0}^{\wedge}=J_{\text{self}}=\overline{J}=\frac{1}{2}\frac{d}{dz}\overline{(\partial_{x}\psi)^{2}}.
\]

Continuing, we see that the general term in the second sum, $J_{\text{nbr}}$,
satisfies
\begin{align*}
 & \nabla^{\perp}(\chi_{j}\psi_{j})\cdot\nabla\partial_{x}(\chi_{j+1}\psi_{j+1})+\nabla^{\perp}(\chi_{j+1}\psi_{j+1})\cdot\nabla\partial_{x}(\chi_{j}\psi_{j})\\
 & =-(\chi_{j}'\psi_{j},\chi_{j}\psi_{j}')\cdot(\chi_{j+1}\psi_{j+1}'',\chi_{j+1}'\psi_{j+1}')-(\chi_{j+1}'\psi_{j+1},\chi_{j+1}\psi_{j+1}')\cdot(\chi_{j}\psi_{j}'',\chi_{j}'\psi_{j}')\\
 & =-\chi_{j}'\psi_{j}\chi_{j+1}\psi_{j+1}''+\chi_{j}\psi_{j}'\chi_{j+1}'\psi_{j+1}'-\chi_{j+1}'\psi_{j+1}\chi_{j}\psi_{j}''+\chi_{j+1}\psi_{j+1}'\chi_{j}'\psi_{j}'\\
 & =\chi_{j}\chi_{j+1}'\Theta_{j,j+1}+\chi_{j}'\chi_{j+1}\Theta_{j+1,j}
\end{align*}
where
\begin{align*}
\Theta_{j,j+1} & =\psi_{j}'\psi_{j+1}'-\psi_{j}''\psi_{j+1}\\
\Theta_{j+1,j} & =\psi_{j+1}'\psi_{j}'-\psi_{j+1}''\psi_{j}.
\end{align*}
Given \prettyref{eq:masterstreamfunction}, we find that
\begin{align*}
\Theta_{j,j+1} & =\sin(\frac{x}{l_{j}})\sin(\frac{x}{l_{j+1}})+\frac{l_{j+1}}{l_{j}}\cos(\frac{x}{l_{j}})\cos(\frac{x}{l_{j+1}})\\
\Theta_{j+1,j} & =\sin(\frac{x}{l_{j}})\sin(\frac{x}{l_{j+1}})+\frac{l_{j}}{l_{j+1}}\cos(\frac{x}{l_{j}})\cos(\frac{x}{l_{j+1}}).
\end{align*}
Applying standard trigonometric identities, 
\begin{align*}
\Theta_{j,j+1} & =\frac{1}{2}(1+\frac{l_{j+1}}{l_{j}})\cos(k_{j}^{\text{diff}}x)+\frac{1}{2}(\frac{l_{j+1}}{l_{j}}-1)\cos(k_{j}^{\text{sum}}x)\\
\Theta_{j+1,j} & =\frac{1}{2}(1+\frac{l_{j}}{l_{j+1}})\cos(k_{j}^{\text{diff}}x)+\frac{1}{2}(\frac{l_{j}}{l_{j+1}}-1)\cos(k_{j}^{\text{sum}}x)
\end{align*}
where $k_{j}^{\text{diff}}$ and $k_{j}^{\text{sum}}$ are as in the statement of
the result. In sum, 
\begin{equation}
J_{\text{nbr}}=\sum_{j=1}^{n}\chi_{j}\chi_{j+1}'\Theta_{j,j+1}+\sum_{j=1}^{n}\chi_{j}'\chi_{j+1}\Theta_{j+1,j}.\label{eq:trans-interactions}
\end{equation}

From \prettyref{eq:self-interactions}, \prettyref{eq:trans-interactions},
and the decomposition $J=J_{\text{self}}+J_{\text{nbr}}$ it is clear which wave-numbers
are present in $\mathcal{Q}$. We see that $\hat{J}_{k}$ is not identically
zero if and only if 
\[
k\in\{0\}\cup\{k_{j}^{\text{diff}}:1\leq j\leq n-1\}\cup\{k_{j}^{\text{sum}}:1\leq j\leq n-1\}.
\]
Since $l_{j+1}\neq l_{j}$, we see that 
\[
0\notin\{k_{j}^{\text{diff}}:1\leq j\leq n-1\}\cup\{k_{j}^{\text{sum}}:1\leq j\leq n-1\}.
\]
For general choices of lengthscales $\{l_{j}\}$ these two sets of
wavenumbers may intersect; however, given our special choices of lengthscales
in \prettyref{eq:l_relations}, we find that
\begin{equation}
\{k_{j}^{\text{diff}}:1\leq j\leq n-1\}\cap\{k_{j}^{\text{sum}}:1\leq j\leq n-1\}=\emptyset.\label{eq:nointerference}
\end{equation}
(If $k_{i}^{\text{sum}}=k_{j}^{\text{diff}}$ then $2^{i+1}+2^{i}=2^{j+1}-2^{j}$
from which the contradiction $3\cdot2^{i}=2^{j}$ follows.) Therefore,
\[
\mathcal{Q}(\mathbf{u}\cdot\nabla\xi)=\sum_{k\neq0}\mathcal{Q}_{k}(\mathbf{u}\cdot\nabla\xi)=\sum_{j=1}^{n-1}\mathcal{Q}_{k_{j}^{\text{sum}}}(\mathbf{u}\cdot\nabla\xi)+\mathcal{Q}_{k_{j}^{\text{diff}}}(\mathbf{u}\cdot\nabla\xi).
\]
\end{proof}
Now we estimate each of the non-zero contributions to the advection
term picked out by \prettyref{lem:diagonal_interactions}.
\begin{lem}
The branching construction satisfies 
\[
\fint_{I_{z}}|\overline{w\xi}-1|^{2}\,dz\lesssim\delta_{n}
\]
and
\[
\mathcal{Q}_{k_{j}^{\emph{sum}}}(\mathbf{u}\cdot\nabla\xi)\vee\mathcal{Q}_{k_{j}^{\emph{diff}}}(\mathbf{u}\cdot\nabla\xi)\lesssim\frac{l_{j}^{2}}{\delta_{j}}\quad1\leq j\leq n-1.
\]
\end{lem}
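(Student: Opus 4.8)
\emph{Plan.} The plan is to treat the two estimates in turn; the first is elementary, and the second combines the Fourier decomposition of $J := \mathbf{u}\cdot\nabla\xi$ from \prettyref{lem:diagonal_interactions} with the kernel bounds of \prettyref{lem:Gestimates}.

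\emph{First estimate.} Since $\xi = w = \partial_x\psi$ by \prettyref{eq:pointwise_slaving} and $\{\psi_j'\}$ is $L^2$-orthonormal, $\overline{w\xi} = \overline{(\partial_x\psi)^2} = \sum_{j=1}^n \chi_j^2$. By the Pythagorean identity \prettyref{eq:chisquaresum} this equals $1$ on $[1-z_{\text{bl}},z_{\text{bl}}]$, while on the boundary layer $[z_{\text{bl}},1]$ (and its mirror) only $\chi_n$ survives, so there $|\overline{w\xi}-1| = |\chi_n^2 - 1|\lesssim 1$ because the amplitude functions are uniformly bounded. As $[z_{\text{bl}},1]\cup[0,1-z_{\text{bl}}]$ has measure $2\delta_n$, integrating gives $\fint_{I_z}|\overline{w\xi}-1|^2\lesssim\delta_n$.

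\emph{Second estimate.} I would read off from the proof of \prettyref{lem:diagonal_interactions} that for $k\in\{k_j^{\text{sum}},k_j^{\text{diff}}\}$ only the $j$th summand of $J_{\text{nbr}}$ contributes to $\hat J_k$ --- this uses the period-doubling relation \prettyref{eq:l_relations} through the non-interference \prettyref{eq:nointerference} --- so that $\hat J_k(z)$ is a linear combination of $\chi_j(z)\chi_{j+1}'(z)$ and $\chi_j'(z)\chi_{j+1}(z)$ whose coefficients, read from the formulas for $\Theta_{j,j+1}$ and $\Theta_{j+1,j}$, are $O(1)$ --- again using period doubling to bound the ratios $l_j/l_{j+1}=2$. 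On the layer $A_j := [z_j,z_{j+1}]\cup[1-z_{j+1},1-z_j]$ (the support of $\chi_j\chi_{j+1}$ by \prettyref{eq:cross_support}, of measure $2\delta_j$) the amplitudes restrict to rescalings of the fixed cutoff $f$ by the single scale $\delta_j$, so $|\chi_j|,|\chi_{j+1}|\lesssim 1$ and $|\chi_j'|,|\chi_{j+1}'|\lesssim\delta_j^{-1}$ there. Hence $|\hat J_k(z)|\lesssim\delta_j^{-1}\mathbf{1}_{A_j}(z)$. Since $G_k\ge 0$ and $\mathcal{Q}_k(J) = \fint_{I_z\times I_{z'}}G_k\,\hat J_k(z)\hat J_k^*(z')\,dzdz'$, this pointwise bound gives $\mathcal{Q}_k(J)\lesssim\delta_j^{-2}\|G_k\|_{L^1(A_j\times A_j)}$.

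\emph{Conclusion.} Finally, apply \prettyref{lem:Gestimates} with $A = A_j$: $\|G_k\|_{L^1(A_j\times A_j)}\lesssim\frac{|A_j|}{|k|}\bigl(|A_j|\wedge|k|^{-1}\bigr)$. Period doubling gives $|k_j^{\text{sum}}| = 3/l_j$ and $|k_j^{\text{diff}}| = 1/l_j$, so $|k|^{-1}\sim l_j$, and the anisotropy assumption \prettyref{eq:assumption2-1} gives $l_j\lesssim\delta_j$, whence $|A_j|\wedge|k|^{-1}\sim l_j$. Thus $\|G_k\|_{L^1(A_j\times A_j)}\lesssim\delta_j l_j\cdot l_j = \delta_j l_j^2$, and $\mathcal{Q}_k(J)\lesssim\delta_j^{-2}\cdot\delta_j l_j^2 = l_j^2/\delta_j$ for $k\in\{k_j^{\text{sum}},k_j^{\text{diff}}\}$, as claimed. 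I expect the only real bookkeeping to be verifying the $O(1)$ coefficient bounds on $\hat J_k$ and checking that on $A_j$ the amplitude functions reduce to a single-scale rescaling of $f$ (so their derivatives are genuinely $O(\delta_j^{-1})$ there); everything else is a direct substitution into \prettyref{lem:Gestimates}.
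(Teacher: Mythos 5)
Your proposal is correct and follows essentially the same route as the paper: the $k=0$ term is reduced to the boundary layers via the Pythagorean identity, and each $\mathcal{Q}_{k}$ is bounded by combining the $L^\infty$ bound $|\hat J_k|\lesssim\delta_j^{-1}\mathbf{1}_{I_j}$ (H\"older) with the kernel estimate of \prettyref{lem:Gestimates} on $A=I_j$, using $|k|^{-1}\sim l_j\lesssim\delta_j$. The only cosmetic difference is that you compute $k_j^{\text{sum}},k_j^{\text{diff}}$ exactly from period doubling where the paper only uses the one-sided bound $1/l_j\lesssim k_j^{\text{sum}}\wedge k_j^{\text{diff}}$; the bookkeeping you flag (the $O(1)$ coefficients and the $O(\delta_j^{-1})$ derivative bounds on the transition layer) checks out.
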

\begin{proof}
We begin with the $k=0$ term. Since 
\[
\overline{w\xi}=\sum_{j=1}^{n}\chi_{j}^{2}
\]
we find that
\[
\int_{I_{z}}|\overline{w\xi}-1|^{2}\,dz\lesssim\delta_{n}.
\]

Next we wish to estimate $\mathcal{Q}_{k_{j}^{\text{sum}}}$ and $\mathcal{Q}_{k_{j}^{\text{diff}}}$.
Recall that
\begin{align*}
\mathcal{Q}_{k_{j}^{\text{sum}}} & =\int_{I_{z}\times I_{z}'}G_{k_{j}^{\text{sum}}}(z,z')\hat{J}_{k_{j}^{\text{sum}}}(z)\hat{J}_{k_{j}^{\text{sum}}}^{*}(z')\,dzdz'\\
\mathcal{Q}_{k_{j}^{\text{diff}}} & =\int_{I_{z}\times I_{z}'}G_{k_{j}^{\text{diff}}}(z,z')\hat{J}_{k_{j}^{\text{diff}}}(z)\hat{J}_{k_{j}^{\text{diff}}}^{*}(z')\,dzdz'.
\end{align*}
By \prettyref{eq:self-interactions} and \prettyref{eq:trans-interactions},
\[
\hat{J}_{k}=[J_{\text{nbr}}]_{k}^{\wedge}=\sum_{j=1}^{n}\chi_{j}\chi_{j+1}^{'}\widehat{\Theta_{j,j+1}}(k)+\chi_{j}^{'}\chi_{j+1}\widehat{\Theta_{j+1,j}}(k)
\]
for $k\neq0$. It follows from \prettyref{eq:nointerference} that
\begin{align*}
\hat{J}_{k_{j}^{\text{sum}}} & =\frac{1}{2}(\frac{l_{j+1}}{l_{j}}-1)\chi_{j}\chi_{j+1}'+\frac{1}{2}(\frac{l_{j}}{l_{j+1}}-1)\chi_{j}'\chi_{j+1}\\
\hat{J}_{k_{j}^{\text{diff}}} & =\frac{1}{2}(1+\frac{l_{j+1}}{l_{j}})\chi_{j}\chi_{j+1}'+\frac{1}{2}(1+\frac{l_{j}}{l_{j+1}})\chi_{j}'\chi_{j+1}.
\end{align*}
In particular, 
\[
\text{supp}\,\hat{J}_{k_{j}^{\text{sum}}}\cup\text{supp}\,\hat{J}_{k_{j}^{\text{sum}}}\subset\text{supp}\,\chi_{j}\chi_{j+1}'\cup\text{supp}\,\chi_{j}'\chi_{j+1} \subset[z_{j},z_{j+1}]\cup[1-z_{j+1},1-z_{j}]=I_{j}
\]
by \prettyref{eq:cross_support}. Thus,
\begin{align*}
\mathcal{Q}_{k_{j}^{\text{sum}}} & =\int_{I_{j}\times I_{j}}G_{k_{j}^{\text{sum}}}(z,z')\hat{J}_{k_{j}^{\text{sum}}}(z)\hat{J}_{k_{j}^{\text{sum}}}^{*}(z')\,dzdz'\\
\mathcal{Q}_{k_{i}^{\text{diff}}} & =\int_{I_{j}\times I_{j}}G_{k_{j}^{\text{diff}}}(z,z')\hat{J}_{k_{j}^{\text{diff}}}(z)\hat{J}_{k_{j}^{\text{diff}}}(z')\,dzdz'.
\end{align*}

Now we estimate these quadratic forms. By H\"older's inequality,
\[
\mathcal{Q}_{k_{j}^{\text{sum}}}\leq||G_{k_{j}^{\text{sum}}}||_{L^{1}(I_{j}\times I_{j})}||\hat{J}_{k_{j}^{\text{sum}}}||_{L^{\infty}}^{2}\quad\text{and}\quad\mathcal{Q}_{k_{j}^{\text{diff}}}\leq||G_{k_{j}^{\text{diff}}}||_{L^{1}(I_{j}\times I_{j})}||\hat{J}_{k_{j}^{\text{diff}}}||_{L^{\infty}}^{2}.
\]
Observe that 
\[
||\chi_{j}\chi_{j+1}'||_{L^{\infty}}\vee||\chi_{j}'\chi_{j+1}||_{L^{\infty}}\lesssim\frac{1}{\delta_{j}}
\]
as a result of \prettyref{eq:assumption2-0}. Combining this with
the first part of \prettyref{lem:Gestimates} applied with $A=I_{j}$,
we find that
\[
\mathcal{Q}_{k_{j}^{\text{sum}}}\lesssim\frac{\delta_{j}}{k_{j}^{\text{sum}}}(\delta_{j}\wedge\frac{1}{k_{j}^{\text{sum}}})\cdot(\frac{l_{j}}{l_{j+1}}\frac{1}{\delta_{j}})^{2}\quad\text{and}\quad\mathcal{Q}_{k_{j}^{\text{diff}}}\lesssim\frac{\delta_{j}}{k_{j}^{\text{diff}}}(\delta_{j}\wedge\frac{1}{k_{j}^{\text{diff}}})\cdot(\frac{l_{j}}{l_{j+1}}\frac{1}{\delta_{j}})^{2}.
\]
It follows from \prettyref{eq:l_relations} that 
\[
\frac{1}{l_{j}}\lesssim k_{j}^{\text{sum}}\wedge k_{j}^{\text{diff}},
\]
so we can simplify these estimates to 
\[
\mathcal{Q}_{k_{j}^{\text{sum}}}\vee\mathcal{Q}_{k_{j}^{\text{diff}}}\lesssim\delta_{j}l_{j}(\delta_{j}\wedge l_{j})\cdot(\frac{l_{j}}{l_{j+1}}\frac{1}{\delta_{j}})^{2}.
\]
Using the first part of \prettyref{eq:assumption2-1} followed by
\prettyref{eq:l_relations}, we conclude that
\[
\mathcal{Q}_{k_{j}^{\text{sum}}}\vee\mathcal{Q}_{k_{j}^{\text{diff}}}\lesssim\frac{l_{j}^{4}}{\delta_{j}l_{j+1}^{2}}\lesssim\frac{l_{j}^{2}}{\delta_{j}}.
\]
This completes the proof.
\end{proof}
We turn to estimate the higher order terms from \prettyref{eq:optimaldesign_w2wenstr}.
\begin{lem}
The branching construction satisfies
\[
\fint_{\Omega}|\nabla\mathbf{u}|^{2}\vee\fint_{\Omega}|\nabla\xi|^{2}\lesssim\frac{1}{l_{1}^{2}}+\sum_{j=1}^{n-1}\frac{\delta_{j}}{l_{j+1}^{2}}+\frac{l_{n}^{2}}{\delta_{n}^{3}}.
\]
\end{lem}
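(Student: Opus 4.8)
The plan is to reduce everything to the Hessian of the streamfunction. Since $\mathbf{u}=(-\partial_z\psi,0,\partial_x\psi)$ and, by \prettyref{eq:pointwise_slaving}, $\xi=w=\partial_x\psi$, both $\nabla\mathbf{u}$ and $\nabla\xi$ are assembled from second partials of $\psi$, so
\[
\fint_{\Omega}|\nabla\mathbf{u}|^{2}\vee\fint_{\Omega}|\nabla\xi|^{2}\leq\fint_{\Omega}\left[(\partial_{xx}\psi)^{2}+2(\partial_{xz}\psi)^{2}+(\partial_{zz}\psi)^{2}\right]
\]
and it suffices to estimate these three terms. Using the Fourier form $\psi=\sum_{j=1}^{n}\chi_{j}(z)\psi_{j}(x)$ from \prettyref{eq:branchingstreamfunction_w2w} with $\psi_{j}(x)=l_{j}\Psi(x/l_{j})$, $\Psi=c_{0}\cos$, I differentiate termwise. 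The key simplification is that each of the families $\{\psi_{j}\},\{\psi_{j}'\},\{\psi_{j}''\}$ is $L^{2}(\mathbb{T}_{x})$-orthogonal across distinct $j$: $\psi_{j}^{(a)}$ is a pure mode at horizontal wavenumber $1/l_{j}$, and the $l_{j}$ are distinct by \prettyref{eq:lk_inequalities} (indeed $l_{j+1}=\tfrac12 l_{j}$ by \prettyref{eq:l_relations}), so all cross terms drop out upon averaging in $x$. Since $\overline{\psi_{j}^{2}}\sim l_{j}^{2}$, $\overline{(\psi_{j}')^{2}}=1$, and $\overline{(\psi_{j}'')^{2}}\sim l_{j}^{-2}$ by the choice of $c_{0}$ and scale-invariance of the period average, this gives
\[
\fint_{\Omega}(\partial_{xx}\psi)^{2}\sim\sum_{j=1}^{n}\frac{1}{l_{j}^{2}}\int_{I_{z}}\chi_{j}^{2},\qquad \fint_{\Omega}(\partial_{xz}\psi)^{2}=\sum_{j=1}^{n}\int_{I_{z}}(\chi_{j}')^{2},\qquad \fint_{\Omega}(\partial_{zz}\psi)^{2}\sim\sum_{j=1}^{n}l_{j}^{2}\int_{I_{z}}(\chi_{j}'')^{2}.
\]

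The next step is to estimate the vertical integrals. By construction $\chi_{j}$ is supported in the two transition layers flanking $\{z=z_{j}\}$ together with their reflections, with $|\chi_{j}|\lesssim1$ there; on the layer of thickness $\delta_{j-1}$ (resp.\ $\delta_{j}$) one has $|\chi_{j}'|\lesssim\delta_{j-1}^{-1}$ (resp.\ $\delta_{j}^{-1}$) and $|\chi_{j}''|\lesssim\delta_{j-1}^{-2}$ (resp.\ $\delta_{j}^{-2}$); for $j=1$ the amplitude equals $1$ on the bulk (so $\chi_{1}'=\chi_{1}''=0$ there) whose thickness is $\sim1$ by \prettyref{eq:assumption3}; and for $j=n$ the boundary-layer portion contributes $\int_{z_{\text{bl}}}^{1}\chi_{n}^{2}=\delta_{n}$ by \prettyref{eq:blsquareintegral_is_one}. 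Carrying out the integrals and using the monotonicity $\delta_{1}\gtrsim\cdots\gtrsim\delta_{n}$ of \prettyref{eq:assumption2-0} yields $\int_{I_{z}}\chi_{1}^{2}\lesssim1$, $\int_{I_{z}}\chi_{j}^{2}\lesssim\delta_{j-1}$ for $j\geq2$, and $\int_{I_{z}}(\chi_{j}')^{2}\lesssim\delta_{j}^{-1}$, $\int_{I_{z}}(\chi_{j}'')^{2}\lesssim\delta_{j}^{-3}$ for all $j$. Substituting (and reindexing the first sum) gives
\[
\fint_{\Omega}(\partial_{xx}\psi)^{2}\lesssim\frac{1}{l_{1}^{2}}+\sum_{j=1}^{n-1}\frac{\delta_{j}}{l_{j+1}^{2}},\qquad \fint_{\Omega}(\partial_{xz}\psi)^{2}\lesssim\sum_{j=1}^{n}\frac{1}{\delta_{j}},\qquad \fint_{\Omega}(\partial_{zz}\psi)^{2}\lesssim\sum_{j=1}^{n}\frac{l_{j}^{2}}{\delta_{j}^{3}}.
\]

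Finally I would show the last two sums are already controlled by the asserted right-hand side, using the anisotropy of the construction. For $1\leq j\leq n-1$ we have $l_{j+1}\leq l_{j}\lesssim\delta_{j}$ by \prettyref{eq:assumption2-1}, hence $\tfrac{1}{\delta_{j}}\lesssim\tfrac{\delta_{j}}{l_{j+1}^{2}}$ and also $\tfrac{l_{j}^{2}}{\delta_{j}^{3}}\leq\tfrac{1}{\delta_{j}}\lesssim\tfrac{\delta_{j}}{l_{j+1}^{2}}$; while for $j=n$, $\delta_{n}\sim l_{n}$ gives $\tfrac{1}{\delta_{n}}\sim\tfrac{l_{n}^{2}}{\delta_{n}^{3}}$, which is exactly the boundary-layer term in the claim. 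Adding the three contributions proves the estimate. The only mildly delicate points are bookkeeping which layer thickness bounds which derivative of $\chi_{j}$ (and the index shift $j\leftrightarrow j+1$ when matching the stated bound), and checking that the geometric ratio $l_{j+1}=\tfrac12 l_{j}$ together with $l_{j}\lesssim\delta_{j}$ lets the $\partial_{xz}$ and $\partial_{zz}$ contributions be absorbed into the first two terms; I do not anticipate any genuine obstacle.
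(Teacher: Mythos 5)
Your proposal is correct and follows essentially the same route as the paper: reduce both norms to $\fint_\Omega|\nabla\nabla\psi|^2$, expand via the $L^2(\mathbb{T}_x)$-orthogonality of the distinct modes $\psi_j^{(a)}$, bound $\|\chi_j\|_{L^2}^2$, $\|\chi_j'\|_{L^2}^2$, $\|\chi_j''\|_{L^2}^2$ layer by layer using $\delta_{j-1}\gtrsim\delta_j$, and absorb the $\delta_j^{-1}$ and $l_j^2\delta_j^{-3}$ contributions into the stated right-hand side via $l_j\lesssim\delta_j$ and $\delta_n\sim l_n$. The bookkeeping and index shift you flag are exactly what the paper does, and no step is missing.
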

\begin{proof}
Note that 
\[
\partial_{xx}\psi=\sum_{j=1}^{n}\chi_{j}\psi_{j}'',\quad\partial_{xz}\psi=\sum_{j=1}^{n}\chi_{j}'\psi_{j}',\ \text{and}\quad\partial_{zz}\psi=\sum_{j=1}^{n}\chi_{j}''\psi_{j}.
\]
Therefore, by orthogonality,
\[
\fint_{\Omega}|\nabla\nabla\psi|^{2}=\sum_{j=1}^{n}\fint_{\Omega}|\chi_{j}\psi_{j}''|^{2}+2|\chi_{j}'\psi_{j}'|^{2}+|\chi_{j}''\psi_{j}|^{2}.
\]
For $j\neq1$, we see from an application of \prettyref{eq:assumption2-0}
that
\[
\fint_{\Omega}|\chi_{j}\psi_{j}''|^{2}=||\chi_{j}||_{L^{2}(I_{z})}^{2}\overline{(\psi_{j}'')^{2}}\lesssim\frac{\delta_{j}+\delta_{j-1}}{l_{j}^{2}}\lesssim\frac{\delta_{j-1}}{l_{j}^{2}}.
\]
For $j=1$, we have instead that
\[
\fint_{\Omega}|\chi_{1}\psi_{1}''|^{2}=||\chi_{1}||_{L^{2}(I_{z})}^{2}\overline{(\psi_{1}'')^{2}}\lesssim\frac{1}{l_{1}^{2}}.
\]

Similarly, we find that
\begin{align*}
\fint_{\Omega}|\chi_{j}'\psi_{j}'|^{2} & =||\chi_{j}'||_{L^{2}(I_{z})}^{2}\overline{(\psi_{j}')^{2}}\lesssim\frac{1}{\delta_{j-1}}+\frac{1}{\delta_{j}}\lesssim\frac{1}{\delta_{j}},\\
\fint_{\Omega}|\chi_{j}''\psi_{j}|^{2} & =||\chi_{j}''||_{L^{2}(I_{z})}^{2}\overline{(\psi_{j})^{2}}\lesssim(\frac{1}{\delta_{j-1}^{3}}+\frac{1}{\delta_{j}^{3}})l_{j}^{2}\lesssim\frac{l_{j}^{2}}{\delta_{j}^{3}}
\end{align*}
for all $j$. Therefore,
\[
\fint_{\Omega}|\nabla\nabla\psi|^{2}\lesssim\frac{1}{l_{1}^{2}}+\sum_{j=1}^{n-1}\left(\frac{\delta_{j}}{l_{j+1}^{2}}+\frac{1}{\delta_{j}}+\frac{l_{j}^{2}}{\delta_{j}^{3}}\right)+\frac{1}{\delta_{n}}+\frac{l_{n}^{2}}{\delta_{n}^{3}}.
\]
By \prettyref{eq:assumption2-1}, 
\[
\frac{\delta_{j}}{l_{j+1}^{2}}\gtrsim\frac{1}{\delta_{j}}\vee\frac{l_{j}^{2}}{\delta_{j}^{3}}\quad\text{and}\quad\frac{l_{n}^{2}}{\delta_{n}^{3}}\gtrsim\frac{1}{\delta_{n}}.
\]
 The result follows.
\end{proof}
We now assemble the previous estimates. Let
\[
E(\epsilon;\{z_{k}\},\{l_{k}\})=\fint_{\Omega}|\nabla\Delta^{-1}\text{div}(\mathbf{u}\xi)|^{2}+\epsilon\fint_{\Omega}|\nabla\mathbf{u}|^{2}\cdot\fint_{\Omega}|\nabla\xi|^{2}
\]
where $(\mathbf{u},\xi)$ are constructed from $\{z_{k}\}_{k=1}^{n}$
and $\{l_{k}\}_{k=1}^{n}$ as described in \prettyref{sec:branchingconstruction}.
It will be convenient in what follows to think of estimating $E$
in terms of some smoothly interpolated version of these parameters. 
\begin{cor}
\label{cor:branchingestimate-final}Let $\ell(z)$ be any smooth,
monotonic function defined on $[z_{\emph{bulk}},z_{\emph{bl}}]$ that satisfies
\[
\ell(z_{k})=l_{k}\quad1\leq k\leq n.
\]
Then, the branching construction corresponding to $\{z_{k}\}_{k=1}^{n}$
and $\{l_{k}\}_{k=1}^{n}$ satisfies 
\[
E(\epsilon;\{z_{k}\},\{l_{k}\})\lesssim l_{\emph{bl}}+\int_{z_{\emph{bulk}}}^{z_{\emph{bl}}}(\ell'(z))^{2}\,dz+\epsilon\left(\frac{1}{l_{\emph{bulk}}^{2}}+\int_{z_{\emph{bulk}}}^{z_{\emph{bl}}}\frac{1}{(\ell(z))^{2}}\,dz+\frac{1}{l_{\emph{bl}}}\right)^{2}.
\]
\end{cor}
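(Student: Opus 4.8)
The plan is to assemble the three estimates established above in this subsection and then convert the resulting finite sums over the transition layers into the advertised integrals of the interpolating length-scale $\ell$.

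First I would record that, combining \prettyref{lem:advection_Fourierests_w2w}, \prettyref{lem:diagonal_interactions}, and the two estimates immediately following \prettyref{lem:diagonal_interactions}, the branching construction satisfies
\[
\fint_{\Omega}\bigl|\nabla\Delta^{-1}\text{div}(\mathbf{u}\xi)\bigr|^{2}\lesssim\delta_{n}+\sum_{j=1}^{n-1}\frac{l_{j}^{2}}{\delta_{j}},\qquad \fint_{\Omega}|\nabla\mathbf{u}|^{2}\vee\fint_{\Omega}|\nabla\xi|^{2}\lesssim\frac{1}{l_{1}^{2}}+\sum_{j=1}^{n-1}\frac{\delta_{j}}{l_{j+1}^{2}}+\frac{l_{n}^{2}}{\delta_{n}^{3}}.
\]
Since both factors in the second estimate are controlled by the same quantity $B$, one gets $\epsilon\fint|\nabla\mathbf{u}|^{2}\cdot\fint|\nabla\xi|^{2}\lesssim\epsilon B^{2}$. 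The boundary-layer contributions are then cleaned up using the relation $\delta_{n}=\delta_{\text{bl}}\sim l_{n}=l_{\text{bl}}$ from \prettyref{eq:assumption2-1}, which gives $\delta_{n}\lesssim l_{\text{bl}}$ and $l_{n}^{2}/\delta_{n}^{3}\sim 1/l_{\text{bl}}$, while $1/l_{1}^{2}=1/l_{\text{bulk}}^{2}$ is immediate. So it remains only to prove the two comparisons
\[
\sum_{j=1}^{n-1}\frac{l_{j}^{2}}{\delta_{j}}\lesssim\int_{z_{\text{bulk}}}^{z_{\text{bl}}}\bigl(\ell'(z)\bigr)^{2}\,dz,\qquad \sum_{j=1}^{n-1}\frac{\delta_{j}}{l_{j+1}^{2}}\lesssim\int_{z_{\text{bulk}}}^{z_{\text{bl}}}\frac{1}{\bigl(\ell(z)\bigr)^{2}}\,dz.
\]

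Both I would prove layer by layer, using that the intervals $[z_{j},z_{j+1}]$ for $1\le j\le n-1$ tile $[z_{\text{bulk}},z_{\text{bl}}]$. On the $j$th layer, the Cauchy--Schwarz (Jensen) inequality gives $\int_{z_{j}}^{z_{j+1}}(\ell')^{2}\ge\delta_{j}^{-1}\bigl(\int_{z_{j}}^{z_{j+1}}|\ell'|\bigr)^{2}\ge\delta_{j}^{-1}\bigl(\ell(z_{j})-\ell(z_{j+1})\bigr)^{2}=\delta_{j}^{-1}(l_{j}-l_{j+1})^{2}$, and the period-doubling rule \prettyref{eq:l_relations} gives $l_{j}-l_{j+1}=\frac{1}{2} l_{j}\sim l_{j}$; summing over $j$ yields the first comparison. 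For the second, monotonicity of $\ell$ together with \prettyref{eq:l_relations} shows $\ell(z)\le l_{j}=2l_{j+1}$ for $z\in[z_{j},z_{j+1}]$, hence $\ell(z)^{-2}\gtrsim l_{j+1}^{-2}$ on that layer and $\int_{z_{j}}^{z_{j+1}}\ell^{-2}\gtrsim\delta_{j}l_{j+1}^{-2}$; summing over $j$ gives the second comparison. Combining these with the reductions of the previous paragraph finishes the proof.

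There is no substantial obstacle here --- this is essentially a bookkeeping lemma --- but the point that deserves care is that both discrete-to-continuous comparisons genuinely use the structural hypotheses from \prettyref{sec:Admissibility}: the period-doubling relation \prettyref{eq:l_relations} (so that $l_{j}\sim l_{j+1}$, hence $l_j-l_{j+1}\sim l_j$, and the values of $\ell$ on a single layer are squeezed between $l_{j+1}$ and $l_{j}$) and the monotonicity of $\ell$. I would emphasize, as the statement does, that the resulting bound does not depend on which admissible interpolant $\ell$ is chosen; this is precisely what makes the reduction useful, since in the next subsection one simply selects $\ell$ so as to make the right-hand side small and then checks admissibility of the induced parameters $\{z_{k}\}$, $\{l_{k}\}$.
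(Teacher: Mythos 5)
Your proposal is correct and follows essentially the same route as the paper: collect the three preceding estimates, absorb the boundary-layer and bulk terms using $\delta_{n}\sim l_{n}=l_{\text{bl}}$ and $l_{1}=l_{\text{bulk}}$, and convert the two sums into integrals layer by layer via Jensen/Cauchy--Schwarz for $\sum l_{j}^{2}/\delta_{j}$ (using $l_{j}-l_{j+1}\sim l_{j}$ from the period-doubling rule) and via $\ell\sim l_{j}\sim l_{j+1}$ on each layer for $\sum\delta_{j}/l_{j+1}^{2}$. No gaps.
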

\begin{proof}
Collecting the results above and using \prettyref{eq:l_relations},
we conclude that
\[
\fint_{\Omega}|\nabla\Delta^{-1}\text{div}(\mathbf{u}\xi)|^{2}\lesssim l_{\text{bl}}+\sum_{j=1}^{n-1}\frac{l_{j}^{2}}{\delta_{j}}\sim l_{\text{bl}}+\sum_{j=1}^{n-1}\left|\frac{l_{j+1}-l_{j}}{\delta_{j}}\right|^{2}\delta_{j}
\]
and

\[
\fint_{\Omega}|\nabla\mathbf{u}|^{2}\vee\fint_{\Omega}|\nabla\xi|^{2}\lesssim\frac{1}{l_{1}^{2}}+\sum_{j=1}^{n-1}\frac{\delta_{j}}{l_{j+1}^{2}}+\frac{l_{n}^{2}}{\delta_{n}^{3}}\sim\frac{\delta_{\text{bulk}}}{l_{\text{bulk}}^{2}}+\sum_{j=1}^{n-1}\frac{1}{l_{j}^{2}}\delta_{j}+\frac{1}{l_{\text{bl}}}.
\]
By Jensen's inequality and the definition of $\ell(z)$, 
\[
\left|\frac{l_{j+1}-l_{j}}{\delta_{j}}\right|^{2}=\left|\fint_{z_{j}}^{z_{j+1}}\ell'(z)\,dz\right|^{2}\leq\fint_{z_{j}}^{z_{j+1}}|\ell'|^{2}\,dz
\]
so that
\[
\fint_{\Omega}|\nabla\Delta^{-1}\text{div}(\mathbf{u}\xi)|^{2}\lesssim l_{\text{bl}}+\sum_{j=1}^{n-1}\int_{z_{j}}^{z_{j+1}}|\ell'|^{2}\,dz=l_{\text{bl}}+\int_{z_{\text{bulk}}}^{z_{\text{bl}}}(\ell')^{2}\,dz.
\]
Also, as $l_{j+1}\sim l_{j}$ by \prettyref{eq:l_relations}, 
\[
\frac{1}{l_{j}^{2}}\delta_{j}=\int_{z_{j}}^{z_{j+1}}\frac{1}{l_{j}^{2}}\,dz\sim\int_{z_{j}}^{z_{j+1}}\frac{1}{\ell^{2}}\,dz.
\]
Therefore, 
\[
\fint_{\Omega}|\nabla\mathbf{u}|^{2}\vee\fint_{\Omega}|\nabla\xi|^{2}\lesssim\frac{\delta_{\text{bulk}}}{l_{\text{bulk}}^{2}}+\sum_{j=1}^{n-1}\int_{z_{j}}^{z_{j+1}}\frac{1}{\ell^{2}}\,dz+\frac{1}{l_{\text{bl}}}=\frac{\delta_{\text{bulk}}}{l_{\text{bulk}}^{2}}+\int_{z_{\text{bulk}}}^{z_{\text{bl}}}\frac{1}{\ell^{2}}\,dz+\frac{1}{l_{\text{bl}}}.
\]
\end{proof}

\subsection{Proof of \prettyref{prop:lowerbound_enstrconstr_w2w}}

The result of the previous analysis is that the branching construction
from \prettyref{sec:branchingconstruction} satisfies the efficiency
estimate
\[
E(\epsilon;\{z_{k}\},\{l_{k}\})\lesssim l_{\text{bl}}+\int_{z_{\text{bulk}}}^{z_{\text{bl}}}(\ell')^{2}\,dz+\epsilon\left(\frac{1}{l_{\text{bulk}}^{2}}+\int_{z_{\text{bulk}}}^{z_{\text{bl}}}\frac{1}{(\ell)^{2}}\,dz+\frac{1}{l_{\text{bl}}}\right)^{2}
\]
where $\ell(z)$ is obtained from $\{l_{k}\}_{k=1}^{n}$ by smooth
and monotonic interpolation. Now to prove \prettyref{prop:lowerbound_enstrconstr_w2w},
we will optimize the righthand side in the free parameters $\ell(z)$,
$l_{\text{bulk}}$, and $l_{\text{bl}}$, and then back out admissible choices of
$\{z_{k}\}_{k=1}^{n}$ and $\{l_{k}\}_{k=1}^{n}$ from the result.
To ensure that the requirements from \prettyref{sec:branchingconstruction}-\prettyref{sec:Admissibility}
hold, we must carry out this optimization under the constraint that
\begin{equation}
0\leq\ell'(z)\lesssim1\quad z\in[z_{\text{bulk}},z_{\text{bl}}].\label{eq:monotonicity_of_l}
\end{equation}
That the minimizer of 
\begin{equation}
\min_{\substack{\ell(z)\\
l(z_{\text{bulk}})=l_{\text{bulk}}\\
\ell(z_{\text{bl}})=l_{\text{bl}}
}
}\,l_{\text{bl}}+\int_{z_{\text{bulk}}}^{z_{\text{bl}}}(\ell')^{2}\,dz+\epsilon\left(\frac{1}{l_{\text{bulk}}^{2}}+\int_{z_{\text{bulk}}}^{z_{\text{bl}}}\frac{1}{(\ell)^{2}}\,dz+\frac{1}{l_{\text{bl}}}\right)^{2}\label{eq:1d_lengthscale_minpblm}
\end{equation}
satisfies \prettyref{eq:monotonicity_of_l} will be verified later
on. 

First, let us determine the optimal form of $\ell(z)$. We consider
that $\epsilon\ll1$ throughout this preliminary discussion, which
should serve to motivate the choices made in the formal proof that
follows. Consider the contributions to \prettyref{eq:1d_lengthscale_minpblm}
coming from the transition layers where $z\in[z_{\text{bulk}},z_{\text{bl}}]$.
We can identify the scaling of their minimum value by balancing the
corresponding integrands. This yields
\begin{equation}
\ell'(z)\sim\epsilon^{1/2}\left(\int_{z_{\text{bulk}}}^{z_{\text{bl}}}\frac{1}{\ell^{2}}\,dz\right)^{1/2}\frac{1}{\ell(z)}.\label{eq:balance}
\end{equation}
It is natural to impose the boundary condition $\ell(1)=0$ to determine
$\ell$. We find that 
\[
\ell(z)\sim c(\epsilon)(1-z)^{1/2}
\]
where $c(\epsilon)$ must be determined by substitution into \prettyref{eq:balance}.
Thus,
\begin{align*}
c(\epsilon) & \sim\epsilon^{1/6}\left(\int_{z_{\text{bulk}}}^{z_{\text{bl}}}\frac{1}{1-z}\,dz\right)^{1/6}=\epsilon^{1/6}\log^{1/6}\left(\frac{1-z_{\text{bulk}}}{1-z_{\text{bl}}}\right).
\end{align*}
Since
\[
l_{\text{bl}}\sim c(\epsilon)(1-z_{\text{bl}})^{1/2}\quad\text{and}\quad l_{\text{bulk}}\sim c(\epsilon)(1-z_{\text{bulk}})^{1/2}
\]
we conclude that
\[
\frac{1-z_{\text{bulk}}}{1-z_{\text{bl}}}\sim\frac{l_{\text{bulk}}}{l_{\text{bl}}}.
\]
Anticipating that $l_{\text{bl}}\ll l_{\text{bulk}}$ for $\epsilon\ll1$, we conclude
that the optimal form of the smooth lengthscale function $\ell(z)$
is given by
\begin{equation}
\ell(z)\sim\epsilon^{1/6}\log^{1/6}\left(\frac{l_{\text{bulk}}}{l_{\text{bl}}}\right)\,(1-z)^{1/2},\quad z\in[z_{\text{bulk}},z_{\text{bl}}].\label{eq:powerlaw_for_l}
\end{equation}
Such an $\ell$ yields the estimates
\begin{align*}
\int_{z_{\text{bulk}}}^{z_{\text{bl}}}(\ell')^{2}\,dz & \sim\epsilon\left(\int_{z_{\text{bulk}}}^{z_{\text{bl}}}\frac{1}{\ell^{2}}\,dz\right)^{2}\sim\epsilon^{1/3}\log^{1/3}\left(\frac{l_{\text{bulk}}}{l_{\text{bl}}}\right)\int_{z_{\text{bulk}}}^{z_{\text{bl}}}\frac{1}{1-z}\,dz\\
 & =\epsilon^{1/3}\log^{1/3}\left(\frac{l_{\text{bulk}}}{l_{\text{bl}}}\right)\log\left(\frac{1-z_{\text{bulk}}}{1-z_{\text{bl}}}\right)\sim\epsilon^{1/3}\log^{4/3}\left(\frac{l_{\text{bulk}}}{l_{\text{bl}}}\right)
\end{align*}
for $\epsilon\ll1$. 

Next, we determine the optimal choices for $l_{\text{bulk}}$ and $l_{\text{bl}}$
in this asymptotic regime. Plugging \prettyref{eq:powerlaw_for_l}
back into \prettyref{eq:1d_lengthscale_minpblm} yields the resulting
minimization
\[
\min_{\substack{l(z_{\text{bulk}})=l_{\text{bulk}}\\
\ell(z_{\text{bl}})=l_{\text{bl}}
}
}\,l_{\text{bl}}+\epsilon^{1/3}\log^{4/3}\left(\frac{l_{\text{bulk}}}{l_{\text{bl}}}\right)+\epsilon\left(\frac{1}{l_{\text{bulk}}^{2}}+\frac{1}{l_{\text{bl}}}\right)^{2}.
\]
Critical point tests yield the optimal scalings
\begin{equation}
l_{\text{bulk}}\sim\epsilon^{1/6}\log^{1/6}\frac{1}{\epsilon}\quad\text{and}\quad l_{\text{bl}}\sim\epsilon^{1/3}\log^{1/3}\frac{1}{\epsilon}\label{eq:bl_bulk_conditions_for_l}
\end{equation}
for $\epsilon\ll1$. Note this is consistent with the hypothesis that
$l_{\text{bl}}\ll l_{\text{bulk}}$ in this regime. To summarize, the smooth lengthscale
function $\ell(z)$ picked out by our analysis of \prettyref{eq:1d_lengthscale_minpblm}
scales as
\begin{equation}
\ell(z)\sim\epsilon^{1/6}\log^{1/6}\frac{1}{\epsilon}\,(1-z)^{1/2},\quad z\in[z_{\text{bulk}},z_{\text{bl}}],\label{eq:powerlaw_for_l-1}
\end{equation}
where
\[
1-z_{\text{bulk}}\sim1\quad\text{and}\quad1-z_{\text{bl}}\sim\epsilon^{1/3}\log^{1/3}\frac{1}{\epsilon}.
\]

We are now ready to prove the upper bound from \prettyref{prop:lowerbound_enstrconstr_w2w}.

\begin{proof}[Proof of \prettyref{prop:lowerbound_enstrconstr_w2w}]
Our plan is to verify the existence of a branching construction, as
described in \prettyref{sec:branchingconstruction}, whose parameters
$\{z_{k}\}_{k=1}^{n}$ and $\{l_{k}\}_{k=1}^{n}$ are consistent with
the optimal smooth lengthscale function $\ell(z)$ from \prettyref{eq:powerlaw_for_l-1}.
Once we verify the requirements of \prettyref{sec:branchingconstruction}-\prettyref{sec:Admissibility}
hold, the desired bound $E(\epsilon;\{z_{k}\},\{l_{k}\})\lesssim\epsilon^{1/3}\log^{4/3}\frac{1}{\epsilon}$
follows as above. For the reader's convenience, we recall the requirements
that must be checked: these are \prettyref{eq:zk_inequalities}, \prettyref{eq:lk_inequalities},
and \prettyref{eq:assumption2-0}-\prettyref{eq:l_relations}.

We start by defining 
\begin{equation}
\ell(z)=\epsilon^{1/6}\log^{1/6}\frac{1}{\epsilon}\,(1-z)^{1/2}\quad z\in[\frac{1}{2},1]\label{eq:eqn_for_cts_l}
\end{equation}
in obvious analogy with \prettyref{eq:powerlaw_for_l-1}. To choose
the horizontal lengthscales $\{l_{k}\}_{k=1}^{n}$, we set 
\[
l_{\text{bulk}}=\frac{l_{x}}{2\pi}\frac{1}{k_{\text{bulk}}}
\]
where $k_{\text{bulk}}\in\mathbb{N}$ satisfies
\begin{equation}
k_{\text{bulk}}-1<\frac{l_{x}}{\pi}\frac{1}{\epsilon^{1/6}\log^{1/6}\frac{1}{\epsilon}}\leq k_{\text{bulk}},\label{eq:bulk_wavenumber}
\end{equation}
and take
\[
l_{k}=\frac{l_{\text{bulk}}}{2^{k-1}},\quad k=1,\dots,n.
\]
Note to ensure $k_{\text{bulk}}\geq1$ we must require that $\epsilon^{1/6}\log^{1/6}\frac{1}{\epsilon}\lesssim l_{x}$.
This condition is given in the statement of \prettyref{prop:lowerbound_enstrconstr_w2w}.
Note also that \prettyref{eq:lk_inequalities} and \prettyref{eq:l_relations}
hold. 

Now as \prettyref{eq:eqn_for_cts_l} is strictly decreasing, we may
define the points $\{z_{k}\}_{k=1}^{n}$ by 
\begin{equation}
\ell(z_{k})=l_{k},\quad k=1,\dots,n.\label{eq:zk_from_lk}
\end{equation}
This gives
\[
z_{k}=1-c_{1}\frac{1}{2^{2(k-1)}},\quad k=1,\dots,n
\]
where
\[
c_{1}=(\frac{l_{\text{bulk}}}{\epsilon^{1/6}\log^{1/6}\frac{1}{\epsilon}})^{2}=\frac{1}{4\pi^{2}}\frac{1}{\epsilon^{1/3}\log^{1/3}\frac{1}{\epsilon}}\frac{l_{x}^{2}}{k_{\text{bulk}}^{2}}.
\]
By \prettyref{eq:bulk_wavenumber}, $c_{1}\leq\frac{1}{4}$ so that
$z_{1}=1-c_{1}\geq\frac{3}{4}$ as required by \prettyref{eq:assumption3}.
Note \prettyref{eq:zk_inequalities} and \prettyref{eq:assumption2-0}
are satisfied as well.

Finally, we fix $n\in\mathbb{N}$ by enforcing \prettyref{eq:bl_bulk_conditions_for_l},
which states here that
\[
\frac{l_{\text{bulk}}}{2^{n-1}}\sim\epsilon^{1/3}\log^{1/3}\frac{1}{\epsilon}.
\]
To achieve this, let us define $n\in\mathbb{N}$ via the inequalities
\[
n-1<\log_{2}\left(\frac{2\pi}{\epsilon^{1/3}\log^{1/3}\frac{1}{\epsilon}}\frac{1}{k_{\text{bulk}}}\right)\leq n.
\]

Having chosen $\{z_{k}\}_{k=1}^{n}$ and $\{l_{k}\}_{k=1}^{n}$, we
may invoke the definitions from \prettyref{sec:branchingconstruction}
to produce a branching construction $(\mathbf{u},\xi)$. Note we have
checked each requirement from \prettyref{sec:branchingconstruction}-\prettyref{sec:Admissibility}
except for \prettyref{eq:assumption2-1}. That $l_{n}\sim\delta_{n}$
follows from \prettyref{eq:bl_bulk_conditions_for_l} and \prettyref{eq:powerlaw_for_l-1}.
Now we show that $l_{k}\lesssim\delta_{k}$ for all $k$. Since $\delta_{k}=z_{k+1}-z_{k}$
and $l_{k}\sim|l_{k+1}-l_{k}|$, this requires showing that
\[
1\lesssim|\frac{z_{k+1}-z_{k}}{l_{k+1}-l_{k}}|
\]
for all $k.$ Noting $z'(\ell)<0$, we only need to show that
\[
1\lesssim|z'(\ell)|\quad\ell\in[l_{\text{bl}},l_{\text{bulk}}].
\]
Differentiating \prettyref{eq:eqn_for_cts_l} implicitly, we find
that
\[
|z'(\ell)|=2\frac{(1-z)^{1/2}}{\epsilon^{1/6}\log^{1/6}\frac{1}{\epsilon}}\gtrsim\frac{\delta_{\text{bl}}^{1/2}}{\epsilon^{1/6}\log^{1/6}\frac{1}{\epsilon}}\sim\frac{l_{\text{bl}}^{1/2}}{\epsilon^{1/6}\log^{1/6}\frac{1}{\epsilon}}\sim1
\]
as required.

In sum, we have produced a branching construction $(\mathbf{u},\xi)$
consistent with the requirements of \prettyref{sec:branchingconstruction}-\prettyref{sec:Admissibility}
whose parameters $\{z_{k}\}_{k=1}^{n}$ and $\{l_{k}\}_{k=1}^{n}$
interpolate the desired smooth lengthscale function $\ell(z)$ from
\prettyref{eq:powerlaw_for_l-1}. The estimates proved in \prettyref{sec:Energy-estimates}
apply, and we may immediately conclude from \prettyref{cor:branchingestimate-final}
and the discussion surrounding \prettyref{eq:bl_bulk_conditions_for_l}
and \prettyref{eq:powerlaw_for_l-1} that 
\begin{align*}
E(\epsilon;\{z_{k}\},&\{l_{k}\})  =\fint_{\Omega}|\nabla\Delta^{-1}\text{div}(\mathbf{u}\xi)|^{2}+\epsilon\fint_{\Omega}|\nabla\mathbf{u}|^{2}\cdot\fint_{\Omega}|\nabla\xi|^{2}\\
 & \lesssim l_{\text{bl}}+\int_{z_{\text{bulk}}}^{z_{\text{bl}}}(\ell')^{2}\,dz+\epsilon\left(\frac{1}{l_{\text{bulk}}^{2}}+\int_{z_{\text{bulk}}}^{z_{\text{bl}}}\frac{1}{(\ell)^{2}}\,dz+\frac{1}{l_{\text{bl}}}\right)^{2}\\
 & \lesssim\epsilon^{1/3}\log^{1/3}\frac{1}{\epsilon}+\epsilon^{1/3}\log^{4/3}\left(\frac{\epsilon^{1/6}\log^{1/6}\frac{1}{\epsilon}}{\epsilon^{1/3}\log^{1/3}\frac{1}{\epsilon}}\right)+\epsilon\left(\frac{2}{\epsilon^{1/3}\log^{1/3}\frac{1}{\epsilon}}\right)^{2}\\
 & \lesssim\epsilon^{1/3}\log^{1/3}\frac{1}{\epsilon}+\epsilon^{1/3}\log^{4/3}\frac{1}{\epsilon}+\frac{\epsilon^{1/3}}{\log^{2/3}\frac{1}{\epsilon}}\lesssim\epsilon^{1/3}\log^{4/3}\frac{1}{\epsilon}
\end{align*}
for $\epsilon\lesssim1$. Thus, \prettyref{prop:lowerbound_enstrconstr_w2w}
is proved. \end{proof}

\section{Implications for the analysis of turbulent heat transport\label{sec:Implications-for-RBC}}

There is a long history, originating in the works of Malkus \cite{malkus1954heat}
and Howard \cite{howard1963heat}, of variational methods for the
analysis of turbulent heat transport, the primary focus of which has been on absolute or \emph{a priori} upper bounds. 
Consider the usual setup
of Rayleigh-B\'enard convection (RBC), wherein an incompressible
fluid layer is heated from below and cooled from above, and is subjected
to a constant downwards-pointing gravitational force. The temperature
field $T(\mathbf{x},t)$ undergoes transport by means of advection-diffusion,
\begin{equation}
\partial_{t}T+\mathbf{u}\cdot\nabla T=\Delta T.\label{eq:adv-diff-eqn}
\end{equation}
The advecting velocity $\mathbf{u}(\mathbf{x},t)$ is coupled back
to temperature field $T$ through a suitable momentum equation. This
could be, for instance, Darcy's law as it is for convection in a fluid
saturated porous layer. Here, we are concerned with convection in
a fluid layer for which, in the Bousinessq approximation, \prettyref{eq:adv-diff-eqn}
is supplemented with the buoyancy forced incompressible Navier-Stokes
equations
\begin{equation}
\partial_{t}\mathbf{u}+\mathbf{u}\cdot\nabla\mathbf{u}+\nabla p=Pr\Delta\mathbf{u}+PrRa\hat{\mathbf{k}}T\label{eq:momentum-eqn}
\end{equation}
and
\begin{equation}
\text{div}\,\mathbf{u}=0.\label{eq:div-free-1}
\end{equation}
The two non-dimensional parameters are the Prandtl number $Pr$, the
ratio of the fluid's kinematic viscosity to its thermal diffusivity,
and the Rayleigh number $Ra$, a ratio of the intensities of driving
to damping forces which is proportional here to the bulk buoyancy
force across the layer. Altogether, \prettyref{eq:adv-diff-eqn}-\prettyref{eq:div-free-1}
constitute the equations of Rayleigh-B\'enard convection in a fluid
layer \cite{Rayleigh1916}. For boundary conditions we continue to
assume that the temperature field is imposed at the top and bottom
of the layer by
\[
T|_{z=1}=0\quad\text{and}\quad T|_{z=0}=1,
\]
while the velocity field is taken to satisfy either the no-slip boundary conditions
\[
\mathbf{u}|_{\partial\Omega}=\mathbf{0}
\]
or the stress-free boundary conditions
\[
w|_{\partial\Omega}=0\quad\text{and}\quad\partial_{z}u|_{\partial\Omega}=\partial_{z}v|_{\partial\Omega}=0.
\]
All fields are assumed to be periodic in the $xy$-plane.

The rate of heat transport in RBC can be measured by the Nusselt number
$Nu$, which evidently depends on $Pr$ and $Ra$ in some unknown
and complicated way. (It can also depend on the initial data, as well
as on the aspect ratios of the fluid layer.) Determining this relationship
and/or establishing absolute bounds on it continues to be the subject
of numerous works across the physical and mathematical literatures.
To date, the best known upper bound holding uniformly in $Pr$ and for no-slip
velocity boundary conditions states that
\begin{equation}
Nu\lesssim Ra^{1/2}\label{eq:Ra1/2bd}
\end{equation}
for $Ra\gg1$ \cite{doering1996variational,howard1963heat,seis2015}.
This bound also holds for stress-free velocity boundary conditions
in the three-dimensional layer $\Omega=\mathbb{T}_{xy}^{2}\times I_{z}$,
but more is known in the two-dimensional case where $\Omega=\mathbb{T}_{x}\times I_{z}$:
in two dimensions with stress-free boundary conditions, one has that
$Nu\lesssim Ra^{5/12}$ uniformly in $Pr$ for $Ra\gg1$ \cite{whitehead2011ultimate}.
(In the formal limit where $Pr=\infty$ and \prettyref{eq:momentum-eqn}
is replaced with Stoke's equation, the situation is quite different
\cite{doering2001upper,doering2006bounds,nobili2017limitations,otto2011rayleigh,yan2004limits}.)
There is little to no evidence, however, that any of these finite
$Pr$ bounds are in fact sharp, i.e., that there exist solutions of the equations of motion
\prettyref{eq:adv-diff-eqn}-\prettyref{eq:div-free-1} satisfying
$Nu\sim Ra^{1/2}$ as $Ra\to\infty$ (or $Nu\sim Ra^{5/12}$ for stress-free
boundaries in two dimensions).

In light of all this, we note that the main fluid dynamical contribution of this paper is a proof that when the momentum equation \prettyref{eq:momentum-eqn} is replaced by the enstrophy-constraint
\begin{equation}
\left\langle |\nabla\mathbf{u}|^{2}\right\rangle =Ra(Nu-1) \label{eq:momentum-balance}
\end{equation}
which it implies, the upper bound $Nu\lesssim  Ra^{1/2}$ becomes asymptotically sharp up to logarithmic corrections.  That is, for all large enough $Ra$ there exist velocity and temperature fields satisfying \prettyref{eq:adv-diff-eqn}, \prettyref{eq:div-free-1}, and \prettyref{eq:momentum-balance} along with the requisite boundary conditions such that
\begin{equation}
\frac{Ra^{1/2}}{\log^{2}Ra}\lesssim Nu \lesssim Ra^{1/2}. \label{eq:log_Nu_bds}
\end{equation}
This follows from \prettyref{thm:mainbounds_w2w_ens} upon taking $Pe^2 = Ra(Nu-1)$. Therefore, either the well-known bound \prettyref{eq:Ra1/2bd} on RBC is asymptotically sharp as $Ra\to \infty$ and $Pr$ is fixed, or details from the momentum equation \prettyref{eq:momentum-eqn} beyond the balance \prettyref{eq:momentum-balance} are essential for determining the scaling law of maximal turbulent heat transport.

The remainder of this section places our analysis of wall-to-wall optimal transport into its proper fluid dynamical context. 
To keep the discussion at a reasonable length, we do not attempt to summarize the vast literature on the
subject but instead focus on two of the most well-known methods for
proving \emph{a priori} bounds on transport: the variational approach
of Howard, and the background method of Constantin-Doering. Our plan is to recall just enough about these methods to allow for comparison with the techniques developed in this paper. 
For Howard's approach see \prettyref{sec:Howardapproach}, while for the background method see \prettyref{sec:onbackgroundmethod}. \prettyref{sec:momentumeqn} concerns the role of the momentum equation.

Before we proceed, let us mention the existence of the recently developed ``auxiliary functional'' method for producing bounds on time-averaged quantities \cite{chernyshenko2014polynomial}.
While the background method may ultimately be derived by a particular
choice of auxiliary functional \textemdash the same is true for the
recently proposed method of Seis \cite{chernyshenko2017} \textemdash it
is not yet clear if there exists any auxiliary functional that yields
an improvement to scaling beyond $Nu\lesssim Ra^{1/2}$. Although
for ordinary differential equations the auxiliary functional method
always yields sharp bounds on long-time averages \cite{tobasco2018optimal},
it remains to be seen if such a situation holds for general PDEs.

\subsection{On the variational approach of Howard\label{sec:Howardapproach}}

\subsubsection{Howard's variational problem}

If RBC is to be taken as a predictive model for turbulent convection,
one naturally asks: which of its solutions are actually realizable by experiment? Setting aside dynamical stability
as a possible selection principle, Malkus introduced in \cite{malkus1954heat} the idea that perhaps amongst all possible solutions of the equations of motion, those that are realized maximize their heat transport overall. An operational approach to establishing upper bounds inspired by Malkus' idea is to search for a larger admissible set of velocity and temperature fields, which contains all solutions of RBC, amongst which the maximal transport can analytically
be determined. This is Howard's variational approach.

Following Howard \cite{howard1963heat}, we observe that if $\mathbf{u}$ and $T$ arise in RBC, they must satisfy two identities known as the ``power integrals''. To derive the first of these, dot the momentum equation
\prettyref{eq:momentum-eqn} into $\mathbf{u}$, integrate by parts
and average in space and time. Changing variables by $\theta=T-(1-z)$ yields
 the first of Howard's identities
\begin{equation}
Ra\left\langle w\theta\right\rangle =\left\langle |\nabla\mathbf{u}|^{2}\right\rangle .\label{eq:powerintegral1}
\end{equation}
(Note this is simply a restatement of \prettyref{eq:momentum-balance}
from above.) A similar manipulation involving the
temperature equation \prettyref{eq:adv-diff-eqn} yields the second
identity
\begin{equation}
\left\langle w\theta\right\rangle +\left\langle w\theta\right\rangle ^{2}-\left\langle |\overline{w\theta}|^{2}\right\rangle =\left\langle |\nabla\theta|^{2}\right\rangle .\label{eq:powerintegral2}
\end{equation}
Consider now the problem of maximizing $Nu$ amongst
all divergence-free vector fields $\mathbf{u}$ and scalar fields
$\theta$ that vanish at the walls and furthermore satisfy \prettyref{eq:powerintegral1}
and \prettyref{eq:powerintegral2}. Since the equations of motion
of RBC imply these constraints, the resulting maximum sets
an\emph{ }upper bound on $Nu$ for RBC. 

Setting aside matters of statistical stationarity \cite{howard1963heat},
one can give an equivalent formulation of the variational problem
described above which makes it tractable for analysis. Under certain
further assumptions on the solutions of RBC (the ``requirements of
homogeneity'' from \cite{howard1963heat}), Howard deduced that the minimization
\begin{equation}
\min_{\substack{\mathbf{u}(\mathbf{x}),\theta(\mathbf{x})\\
\mathbf{u}|_{\partial\Omega}=\mathbf{0},\theta|_{\partial\Omega}=0\\
\fint_{\Omega}w\theta=1
}
}\,\fint_{\Omega}|\overline{w\theta}-1|^{2}+\epsilon\fint_{\Omega}|\nabla\mathbf{u}|^{2}\fint_{\Omega}|\nabla\theta|^{2}\label{eq:Howard's_pblm}
\end{equation}
is equivalent to the maximization $\sup\,Nu$ described above, and
that its optimal value can be used to produce an \emph{a priori} bound
on RBC (the algebraic manipulations in the proof of this are like
those performed in \prettyref{sec:OptlDesignProb_w2w} in the derivation
of the integral formulation of steady wall-to-wall optimal transport). 

The minimization \prettyref{eq:Howard's_pblm} is known as Howard's
problem. It bears striking resemblance to our integral formulation
of steady enstrophy-constrained wall-to-wall transport
\begin{equation}
\min_{\substack{\mathbf{u}(\mathbf{x}),\xi(\mathbf{x})\\
\mathbf{u}|_{\partial\Omega}=\mathbf{0},\xi|_{\partial\Omega}=0\\
\fint_{\Omega}w\xi=1
}
}\,\fint_{\Omega}|\nabla\Delta^{-1}\text{div}(\mathbf{u}\xi)|^{2}+\epsilon\fint_{\Omega}|\nabla\mathbf{u}|^{2}\fint_{\Omega}|\nabla\xi|^{2},\label{eq:steady_optl_design_enstrophy}
\end{equation}
obtained in \prettyref{sec:OptlDesignProb_w2w}. To find the relationship
between \prettyref{eq:Howard's_pblm} and \prettyref{eq:steady_optl_design_enstrophy},
we apply \prettyref{lem:advection_Fourierests_w2w} along with the
net-flux constraint $\fint_{\Omega}w\xi=1$ and decompose the advection
term as
\[
\fint_{\Omega}|\nabla\Delta^{-1}\text{div}(\mathbf{u}\xi)|^{2}=\fint_{\Omega}|\overline{w\xi}-1|^{2}+\mathcal{Q}(\text{div}(\mathbf{u}\xi))
\]
where $\mathcal{Q}$ is the positive semi-definite quadratic form
defined in \prettyref{lem:advection_Fourierests_w2w}. 

This last equation reveals the precise distinction between Howard's problem \prettyref{eq:Howard's_pblm} and our integral formulation in \prettyref{eq:steady_optl_design_enstrophy}. Because $\mathcal{Q}$ is positive semi-definite, it is evident that the minimum in \prettyref{eq:Howard's_pblm} is not smaller than the minimum in \prettyref{eq:steady_optl_design_enstrophy}. As a result, Howard's upper bound on heat transport is not lower than ours. Though the improvement in scaling in our approach is limited by \prettyref{eq:log_Nu_bds} to at most a logarithmic correction, it remains to be seen whether such a correction holds as an absolute upper bound. We turn now to consider the difference between the optimizers of \prettyref{eq:Howard's_pblm} and \prettyref{eq:steady_optl_design_enstrophy}.  

\subsubsection{Busse's multi-$\alpha$ technique}

As shown by Howard and Busse \cite{busse1969howards,howard1963heat},
the optimal value of Howard's problem \prettyref{eq:Howard's_pblm}
scales as $\epsilon^{1/3}$ for $\epsilon\ll1$. Thus, Howard's approach
to bounds on RBC yields $Nu\lesssim Ra^{1/2}$ and no better. The
\emph{a priori }lower bound implicit in this result is due to Howard;
the upper bound was obtained by Busse as an application of his ``multi-$\alpha$''
technique, which seeks to produce asymptotically valid solutions of
the Euler-Lagrange equations of \prettyref{eq:Howard's_pblm} involving
multiple horizontal wave numbers. Busse's multi-$\alpha$ analysis turns out to share parallels with our construction of branching flows, which we would like to discuss now. 

We start by recalling Howard's lower bound:

\begin{equation}
\min_{\substack{\mathbf{u}(\mathbf{x}),\theta(\mathbf{x})\\
\mathbf{u}|_{\partial\Omega}=\mathbf{0},\theta|_{\partial\Omega}=0\\
\fint_{\Omega}w\theta=1
}
}\,\fint_{\Omega}|\overline{w\theta}-1|^{2}+\epsilon\fint_{\Omega}|\nabla\mathbf{u}|^{2}\fint_{\Omega}|\nabla\theta|^{2}\gtrsim\epsilon^{1/3}\label{eq:Howard_LB}
\end{equation}
for $\epsilon\ll1$. Let $(\mathbf{u},\theta)$ be admissible, which we can take to be smooth. Let
$\delta\in(0,\frac{1}{2})$ be such that 
\[
0\leq|\overline{w\theta}|\leq\frac{1}{2}\quad\text{for}\ z\in[0,\delta],\quad\text{and}\quad\overline{w\theta}(\delta)=\frac{1}{2}.
\]
(If there does not exist such a $\delta$, then $\fint_{\Omega}|\overline{w\theta}-1|^{2}\gtrsim1\gg\epsilon^{2/3}$.)
By its definition,
\[
\fint_{\Omega}|\overline{w\theta}-1|^{2}\geq\frac{1}{|I_{z}|}\int_{0}^{\delta}|\overline{w\theta}-1|^{2}\gtrsim\delta.
\]
\prettyref{lem:lingrowth_wtheta} states that
\[
\frac{1}{|\mathbb{T}_{xy}^{2}|}||\partial_{z}\theta||_{L^{2}(\Omega)}||\partial_{z}w||_{L^{2}(\Omega)}\gtrsim\frac{\left|\overline{w\theta}(z)\right|}{|z\wedge(1-z)|}\quad\forall\,z.
\]
Taking $z=\delta$ and squaring, we conclude that
\[
\fint_{\Omega}|\nabla\mathbf{u}|^{2}\fint_{\Omega}|\nabla\theta|^{2}\gtrsim\frac{1}{\delta^{2}}.
\]
Therefore, the optimal value in the lefthand side of \prettyref{eq:Howard_LB}
is bounded below by 
\[
\inf_{\delta\in(0,\frac{1}{2})}\left\{ \delta+\epsilon\frac{1}{\delta^{2}}\right\} \sim\epsilon^{1/3}
\]
for $\epsilon\ll1$, and \prettyref{eq:Howard_LB} is proved. 

Now we discuss Busse's upper bound: it asserts the existence
of admissible pairs $\{(\mathbf{u}_{\epsilon},\theta_{\epsilon})\}$
satisfying 
\begin{equation}
\fint_{\Omega}|\overline{w_{\epsilon}\theta_{\epsilon}}-1|^{2}+\epsilon\fint_{\Omega}|\nabla\mathbf{u}_{\epsilon}|^{2}\fint_{\Omega}|\nabla\theta_{\epsilon}|^{2}\lesssim\epsilon^{1/3}\label{eq:Busse_UB}
\end{equation}
for $\epsilon\ll1$. Busse's multi-$\alpha$ technique is analogous to our branching construction from \prettyref{sec:enstrophy_constrained_design}.
Arguing as in that section, we find that our branching construction
with lengthscale $\ell(z)$ satisfies the estimates
\[
\fint_{\Omega}|\overline{w\theta}-1|^{2}\lesssim l_{\text{bl}}\quad\text{and}\quad\fint_{\Omega}|\nabla\mathbf{u}|^{2}\fint_{\Omega}|\nabla\theta|^{2}\lesssim\left(\frac{1}{l_{\text{bulk}}^{2}}+\int_{z_{\text{bulk}}}^{z_{\text{bl}}}\frac{1}{\ell^{2}}\,dz+\frac{1}{l_{\text{bl}}}\right)^{2}
\]
so long as $0\leq\ell'(z)\lesssim1$. Since branching is admissible
for Howard's problem, we find its optimal value is bounded above by
\begin{equation}
\min_{\substack{\ell(z)\\
\ell(z_{\text{bulk}})=l_{\text{bulk}}\\
\ell(z_{\text{bl}})=l_{\text{bl}}\\
0\leq\ell'(z)\lesssim1
}
}\,l_{\text{bl}}+\epsilon\left(\frac{1}{l_{\text{bulk}}^{2}}+\int_{z_{\text{bulk}}}^{z_{\text{bl}}}\frac{1}{\ell^{2}}\,dz+\frac{1}{l_{\text{bl}}}\right)^{2}. \label{eq:1dpblm_Busse}
\end{equation}
Choosing
\begin{align*}
\ell(z) & \sim1-z\quad z\in[z_{\text{bulk}},z_{\text{bl}}],\\
l_{\text{bulk}} & \sim1,\ l_{\text{bl}}\sim\epsilon^{1/3}
\end{align*}
yields \prettyref{eq:Busse_UB}. Although Busse's construction
is usually described in terms of discrete wavenumbers $\{\alpha_{k}\}_{k=1}^{n}$
and points $\{z_{k}\}_{k=1}^{n}$, for $\epsilon\ll1$ these can be
seen to arise from interpolation of the continuous lengthscale $\ell(z)\sim1-z$,
similar to the presentation in \prettyref{sec:enstrophy_constrained_design}.

Coming back to wall-to-wall optimal transport, we can now discuss the difference between the optimizers of Howard's problem \prettyref{eq:Howard's_pblm} and our integral formulation in \prettyref{eq:steady_optl_design_enstrophy}. As the analysis in \prettyref{sec:enstrophy_constrained_design} indicates, adding $\mathcal{Q}$ to Howard's problem \prettyref{eq:Howard's_pblm} should change the preferred lengthscale for branching from Busse's linear law $\ell\sim1-z$ to our square root one $\ell\sim c(\epsilon)\sqrt{1-z}$. The estimates obtained there show that
\begin{equation}
\mathcal{Q}\sim\int_{z_{\text{bulk}}}^{z_{\text{bl}}}(\ell')^{2}dz\label{eq:role_of_Q}.
\end{equation}
Thus, the 1D problem \prettyref{eq:1dpblm_Busse} for selecting the lengthscale function $\ell$ turns into \prettyref{eq:1d_w2w} for wall-to-wall optimal transport. It remains to be seen whether the true optimizers of \prettyref{eq:steady_optl_design_enstrophy} exhibit branching with these preferred lengthscales. Presumably, developing such fine detailed knowledge of the minimizers would help resolve the question of logarithmic corrections to scaling.  

\subsection{On the background method\label{sec:onbackgroundmethod}}

\subsubsection{Background method for RBC}

In \cite{doering1996variational}, Constantin and one of the authors introduced
an alternate method to Howard's for establishing \emph{a priori} bounds
on RBC, which can be applied without any assumptions of statistical
stationarity or homogeneity. We recall the argument now, with the
goal of connecting it to the symmetrization method from
\prettyref{sec:aprioribds}. We follow the presentation in \cite{doering1995}.

Let $\mathbf{u}$ and $T$ arise from RBC and decompose the temperature
field into the sum of stationary ``background'' and fluctuating
parts,
\[
T(\mathbf{x},t)=\tau(z)+\theta(\mathbf{x},t)
\]
where $\tau(0)=1$ and $\tau(1)=0$. Then, 
\[
\frac{1}{2}\frac{d}{dt}\left(\fint_{\Omega}|\theta|^{2}+\frac{1}{PrRa}\fint_{\Omega}|\mathbf{u}|^{2}\right)+\frac{1}{2}\fint_{\Omega}|\nabla T|^{2}=\frac{1}{2}\int_{0}^{1}|\tau'|^{2}-H_{\tau}(\mathbf{u},\theta)
\]
where $H_{\tau}$ is the quadratic form
\[
H_{\tau}(\mathbf{u},\theta)=\fint_{\Omega}\frac{1}{Ra}|\nabla\mathbf{u}|^{2}+\frac{1}{2}|\nabla\theta|^{2}+w\theta(\tau'-1).
\]
Provided that $H_{\tau}\geq0$ for all divergence-free vector fields
$\mathbf{u}(\mathbf{x})$ and scalar fields $\theta(\mathbf{x})$
vanishing at $\partial\Omega$, we can drop the last term from the
dissipation equation and take a long-time average to find the inequality
\[
\left\langle |\nabla T|^{2}\right\rangle \leq\int_{0}^{1}|\tau'|^{2}.
\]
This proves the following variational bound:
\begin{equation}
Nu\leq\inf_{\substack{\tau(z)\\
\tau(0)=1,\tau(1)=0\\
H_{\tau}\geq0
}
}\,\int_{0}^{1}|\tau'|^{2}.\label{eq:backgroundmethod_RBC}
\end{equation}
Those background fields $\tau$ which satisfy $H_{\tau}\geq0$ are
known as \emph{spectrally stable}. 

As proved in \cite{doering1995}, there exist spectrally stable background
fields $\{\tau_{\delta}\}$ satisfying 
\[
\int_{0}^{1}|\tau_{\delta}'|^{2}\sim\frac{1}{\delta}
\]
for all $\delta\leq Ra^{-1/2}$. Minimizing the resulting bound $Nu\lesssim\frac{1}{\delta}$
over this range of $\delta$ proves that $Nu\lesssim Ra^{1/2}.$ We
note the remarkable similarity between the background fields constructed
in \cite{doering1995} and those constructed for the symmetrization
method in \prettyref{eq:testfunctions}, which we turn to discuss now.

\subsubsection{Background method for optimal transport}

As observed in \cite{SD}, one can obtain \emph{a priori} bounds on
optimal transport via a suitable modification of the background method.
Here, our goal is to show that the symmetrization method from \prettyref{sec:aprioribds},
when properly abstracted and optimized, yields an \emph{a priori} bound
on transport whose value is \emph{exactly the same} as that obtained
in \cite{SD}. This begs the question of whether better background fields might
be constructed to improve upon the scaling $Nu\lesssim Ra^{1/2}$ (albeit by at most a logarithmic amount). Numerical evidence looks to point in the opposite direction, as the optimal bounds found in \cite{plasting2003improved} scale $\sim Ra^{1/2}$. We are not aware of a proof demonstrating this at the present time.

The modified background method from \cite{SD} is as follows. Let
$T$ solve the advection-diffusion equation \prettyref{eq:adv-diff-eqn}.
Performing the background decomposition
\[
T(\mathbf{x},t)=\tau(z)+\theta(\mathbf{x},t)
\]
with $\tau(0)=1$ and $\tau(1)=0$ and introducing a Lagrange multiplier
$\lambda\in\mathbb{R}$, we find that
\begin{equation}
\frac{1}{2}\frac{d}{dt}\fint_{\Omega}|\theta|^{2}+\frac{1}{2}\fint_{\Omega}|\nabla T|^{2}=\frac{1}{2}\int_{0}^{1}|\tau'|^{2}+\frac{\lambda}{2}Pe^{2}-H_{\tau,\lambda}(\mathbf{u},\theta)\label{eq:energy-dissapation-w2w}
\end{equation}
where $H_{\tau,\lambda}$ is the quadratic form
\[
H_{\tau,\lambda}(\mathbf{u},\theta)=\fint_{\Omega}\frac{\lambda}{2}|\nabla\mathbf{u}|^{2}+\frac{1}{2}|\nabla\theta|^{2}+w\theta\tau'.
\]
If $H_{\tau,\lambda}\geq0$ for all divergence-free vector fields
$\mathbf{u}(\mathbf{x})$ and scalar fields $\theta(\mathbf{x})$
vanishing at $\partial\Omega$, the dissipation equation \prettyref{eq:energy-dissapation-w2w}
implies that
\[
\left\langle |\nabla T|^{2}\right\rangle \leq\int_{0}^{1}|\tau'|^{2}+\lambda Pe^{2}.
\]
Thus,
\begin{equation}
Nu\leq\inf_{\substack{\tau(z),\lambda\\
\tau(0)=1,\tau(1)=0\\
H_{\tau,\lambda}\geq0
}
}\left\{ \int_{0}^{1}|\tau'|^{2}+\lambda Pe^{2}\right\} \label{eq:backgroundmethod_w2w}
\end{equation}
In parallel with the background method discussed above, we 
refer to background fields $\tau$ satisfying $H_{\tau,\lambda}\geq0$
as being \emph{spectrally stable at Lagrange multiplier $\lambda$.} 

On the other hand, the symmetrization method from \prettyref{sec:aprioribds}
yields the bound
\begin{equation}
\sup_{\substack{\mathbf{u}(\mathbf{x},t)\\
\left\langle |\nabla\mathbf{u}|^{2}\right\rangle =Pe^{2}\\
\mathbf{u}|_{\partial\Omega}=0
}
}\,Nu(\mathbf{u})\leq\inf_{\substack{\eta(\mathbf{x})\\
\eta|_{z=1}=0,\eta|_{z=1}=1
}
}\left\{ \fint_{\Omega}|\nabla\eta|^{2}+Pe^{2}\sup_{\substack{\mathbf{u}(\mathbf{x})\\
\mathbf{u}|_{\partial\Omega}=\mathbf{0}\\
\fint_{\Omega}|\nabla\mathbf{u}|^{2}=1
}
}\,\fint_{\Omega}|\nabla\Delta^{-1}\text{div}(\mathbf{u}\eta)|^{2}\right\} \label{eq:symmmethod_w2w}
\end{equation}
when carried out optimally. As it turns out, these bounds are one and the same. 
\begin{lem}
Let $U_{\emph{bm}}(Pe)$ and $U_{\emph{symm}}(Pe)$ denote the optimal values appearing
on the righthand sides of \prettyref{eq:backgroundmethod_w2w} and
\prettyref{eq:symmmethod_w2w}, respectively. We have that $U_{\emph{bm}}=U_{\emph{symm}}$.
\end{lem}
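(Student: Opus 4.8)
The plan is to show that the two expressions reduce to the \emph{same} one-dimensional optimization. The bridge is the observation that the spectral stability requirement $H_{\tau,\lambda}\ge 0$, for a fixed background profile $\tau(z)$, is equivalent to $\lambda$ exceeding the inner supremum that appears in \eqref{eq:symmmethod_w2w} evaluated at $\eta=\tau$; once this is established, the only remaining point is that the freedom in \eqref{eq:symmmethod_w2w} to let the test function depend on the horizontal variables is useless, so its infimum is already attained among $z$-dependent profiles.

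First I would compute, for fixed admissible $\tau$ and fixed divergence-free $\mathbf{u}$ vanishing on $\partial\Omega$, the minimum of $H_{\tau,\lambda}(\mathbf{u},\theta)$ over $\theta\in H_0^1(\Omega)$. Completing the square — equivalently solving the Euler--Lagrange equation $\Delta\theta=w\tau'$, so $\theta^\ast=\Delta^{-1}(w\tau')$ — gives
\[
\inf_{\theta\in H_0^1(\Omega)}\ \fint_\Omega \tfrac12|\nabla\theta|^2+w\theta\tau'=-\tfrac12\fint_\Omega|\nabla\Delta^{-1}(w\tau')|^2 .
\]
Hence $H_{\tau,\lambda}\ge0$ for all admissible $(\mathbf{u},\theta)$ if and only if $\lambda\fint_\Omega|\nabla\mathbf{u}|^2\ge \fint_\Omega|\nabla\Delta^{-1}(w\tau')|^2$ for all admissible $\mathbf{u}$, i.e.\ if and only if $\lambda\ge S(\tau)$, where
\[
S(\tau):=\sup_{\substack{\mathbf{u}(\mathbf{x}),\ \mathrm{div}\,\mathbf{u}=0\\ \mathbf{u}|_{\partial\Omega}=\mathbf{0},\ \fint_\Omega|\nabla\mathbf{u}|^2=1}}\ \fint_\Omega\bigl|\nabla\Delta^{-1}(w\tau')\bigr|^2 .
\]
Since $\tau=\tau(z)$ and $\mathbf{u}$ is divergence-free, $\mathrm{div}(\mathbf{u}\tau)=w\tau'$, so $S(\tau)$ is exactly the inner supremum in \eqref{eq:symmmethod_w2w} at $\eta=\tau$; and $\int_0^1|\tau'|^2=\fint_\Omega|\nabla\tau|^2$ for $z$-only $\tau$. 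Thus $U_{\emph{bm}}=\inf_{\tau(z)}\{\fint_\Omega|\nabla\tau|^2+Pe^2 S(\tau)\}$, which is the functional of \eqref{eq:symmmethod_w2w} optimized over the \emph{smaller} class of $z$-dependent test functions; in particular $U_{\emph{symm}}\le U_{\emph{bm}}$.

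For the reverse inequality I would show that horizontal averaging never increases the right-hand functional of \eqref{eq:symmmethod_w2w}. Given admissible $\eta\in H^1(\Omega)\cap L^\infty(\Omega)$ with $\eta|_{z=0}=1,\ \eta|_{z=1}=0$, let $\overline\eta(z)=\fint_{\mathbb{T}_{xy}^2}\eta(\cdot,z)$; this is again admissible (it inherits the boundary values and the $L^\infty$ bound), depends only on $z$, and $\fint_\Omega|\nabla\overline\eta|^2\le\fint_\Omega|\nabla\eta|^2$ by Jensen. For the nonlocal term, note that for each fixed $\mathbf{u}$ the map $\eta\mapsto\fint_\Omega|\nabla\Delta^{-1}\mathrm{div}(\mathbf{u}\eta)|^2$ is a nonnegative quadratic form in $\eta$, hence convex and continuous; therefore $S(\eta)=\sup_{\mathbf{u}}(\cdot)$ is convex and lower semicontinuous in $\eta$. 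Moreover $\nabla$, $\mathrm{div}$, $\Delta^{-1}$, and all constraints defining the admissible $\mathbf{u}$ commute with horizontal translations $\mathcal{T}_a$ (the domain being $\mathbb{T}_{xy}^2$-periodic), so $S(\mathcal{T}_a\eta)=S(\eta)$ for every $a\in\mathbb{T}_{xy}^2$. Since $\overline\eta$ lies in the closed convex hull of $\{\mathcal{T}_a\eta:a\in\mathbb{T}_{xy}^2\}$, convexity and lower semicontinuity of $S$ give $S(\overline\eta)\le S(\eta)$. Hence $\fint_\Omega|\nabla\overline\eta|^2+Pe^2 S(\overline\eta)\le\fint_\Omega|\nabla\eta|^2+Pe^2 S(\eta)$, so the infimum in \eqref{eq:symmmethod_w2w} is attained among $z$-only test functions, giving $U_{\emph{symm}}\ge U_{\emph{bm}}$ and therefore equality; both equal $\inf_{\tau(z)}\{\int_0^1|\tau'|^2+Pe^2 S(\tau)\}$.

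The routine pieces are the completion-of-square identity and the bookkeeping of the rescaling $\mathbf{u}\mapsto Pe\,\mathbf{v}$ used to extract \eqref{eq:symmmethod_w2w} from \prettyref{cor:symmsteadybound_w2w}. The step that will need the most care is the reduction from general $\eta(\mathbf{x})$ to $z$-dependent profiles: one must verify cleanly that $S$ is convex (as a supremum of the quadratic forms $Q_{\mathbf{u}}$), translation invariant, and lower semicontinuous in a topology in which $\overline\eta$ belongs to the closed convex hull of the translates of $\eta$, and that $\overline\eta$ genuinely remains admissible. Everything else then collapses onto the common functional above.
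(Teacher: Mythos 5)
Your proof is correct and follows essentially the same route as the paper: the duality in $\theta$ (your completion of the square) identifies spectral stability with $\lambda\ge S(\tau)$, giving $U_{\text{symm}}\le U_{\text{bm}}$, and the reverse inequality comes from convexity plus horizontal averaging. The only difference is cosmetic — the paper invokes convexity of the sublevel set $\{\eta: H_{\eta,\lambda}\ge 0\}$ where you invoke convexity and translation invariance of $S$ itself, and your version usefully spells out the Jensen/translation step that the paper leaves implicit.
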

\begin{rem}
As the following proof shows, the minimization in \prettyref{eq:symmmethod_w2w}
can be performed over $\eta$ depending $z$ alone without changing the resulting value.
\end{rem}
\begin{proof}
We prove this in two steps: first we show that $U_{\text{symm}}\leq U_{\text{bm}}$
and then we prove the reverse inequality. In both cases, we will use
the fact that
\begin{equation}
\int_{\Omega}|\nabla\Delta^{-1}\text{div}\,\mathbf{m}|^{2}=\sup_{\substack{\theta(\mathbf{x})\\
\theta|_{\partial\Omega}=0
}
}\,\int_{\Omega}2\mathbf{m}\cdot\nabla\theta-|\nabla\theta|^{2}\label{eq:duality_equality}
\end{equation}
for all $\mathbf{m}\in L^{2}(\Omega;\mathbb{R}^{3})$. 

We begin by showing that $U_{\text{symm}}\leq U_{\text{bm}}$. Taking $\mathbf{m}=\mathbf{u}\tau$
in \prettyref{eq:duality_equality}, we see that a background field
$\tau(z)$ satisfies $H_{\tau,\lambda}\geq0$ if and only if
\[
\int_{\Omega}|\nabla\Delta^{-1}\text{div}\,\mathbf{u}\tau|^{2}\leq\lambda\int_{\Omega}|\nabla\mathbf{u}|^{2}
\]
for all divergence-free $\mathbf{u}$ that vanish at $\partial\Omega$.
Therefore, 
\[
U_{\text{bm}}\geq\inf_{\substack{\tau(z),\lambda\\
\tau(0)=1,\tau(1)=0\\
H_{\tau,\lambda}\geq0
}
}\left\{ \int_{0}^{1}|\tau'|^{2}+Pe^{2}\sup_{\substack{\mathbf{u}(\mathbf{x})\\
\mathbf{u}|_{\partial\Omega}=\mathbf{0}\\
\fint_{\Omega}|\nabla\mathbf{u}|^{2}=1
}
}\,\fint_{\Omega}|\nabla\Delta^{-1}\text{div}(\mathbf{u}\tau)|^{2}\right\} \geq U_{\text{symm}}
\]
since enlarging the admissible set only decreases the resulting minimal
value.

Now we prove that $U_{\text{symm}}\geq U_{\text{bm}}$. Parameterizing the admissible
set from \prettyref{eq:symmmethod_w2w} via the level sets of 
\[
M(\eta)=\sup_{\substack{\mathbf{u}(\mathbf{x})\\
\mathbf{u}|_{\partial\Omega}=\mathbf{0}\\
\fint_{\Omega}|\nabla\mathbf{u}|^{2}=1
}
}\,\fint_{\Omega}|\nabla\Delta^{-1}\text{div}(\mathbf{u}\eta)|^{2},
\]
we can write that 
\[
U_{\text{symm}}=\inf_{\lambda}\inf_{\substack{\eta(\mathbf{x})\\
\eta|_{z=1}=0,\eta|_{z=1}=1\\
M(\eta)=\lambda
}
}\left\{ \fint_{\Omega}|\nabla\eta|^{2}+Pe^{2}\lambda\right\} .
\]
Extend the definition of $H_{\tau,\lambda}$ to functions of $\mathbf{x}$
by taking
\[
H_{\eta,\lambda}(\mathbf{u},\theta)=\fint_{\Omega}\frac{\lambda}{2}|\nabla\mathbf{u}|^{2}+\frac{1}{2}|\nabla\theta|^{2}+\mathbf{u}\theta\cdot\nabla\eta.
\]
By \prettyref{eq:duality_equality},
\[
M(\eta)=\lambda\iff\inf_{\substack{\mathbf{u}(\mathbf{x}),\theta(\mathbf{x})\\
\mathbf{u}|_{\partial\Omega}=\mathbf{0},\theta|_{\partial\Omega}=0
}
}\,H_{\eta,\lambda}(\mathbf{u},\theta)=0
\]
and the latter happens if and only if $H_{\eta ,\lambda }\geq 0$.
Using that
$\fint_{\Omega}|\nabla\eta|^{2}$ is convex in $\eta$ and that $\{\eta:H_{\eta,\lambda}\geq0\}$
is also convex, we can replace $\eta$ with its periodic average
$\tau=\overline{\eta}$ to deduce that
\[
U_{\text{symm}}\geq\inf_{\lambda}\inf_{\substack{\tau(z)\\
\tau|_{z=1}=0,\tau|_{z=1}=1\\
H_{\tau,\lambda}\geq0
}
}\left\{ \fint_{\Omega}|\tau'|^{2}+Pe^{2}\lambda\right\} =U_{\text{bm}}
\]
as desired.
\end{proof}

\subsection{On the realizability of optimal heat transport by buoyancy-driven convection\label{sec:momentumeqn}}

We return to the full system \prettyref{eq:adv-diff-eqn}-\prettyref{eq:div-free-1} now. One may wonder if buoyancy forces are capable of producing flows,
time-dependent or steady, that realize near-optimal heat transport.
The answer depends upon the way in which flow intensity is constrained.

First, we note that the energy-constrained wall-to-wall optimal transport problem corresponds
to RBC in a fluid saturated porous layer where the Navier-Stokes momentum
equation \prettyref{eq:momentum-eqn} is replaced by Darcy's law.
This implies the balance law $\left\langle |\mathbf{u}|^{2}\right\rangle =Ra(Nu-1)$
which, when combined with the result of \prettyref{thm:mainbounds_w2w_eng},
yields the optimal scaling $Nu\sim Ra$ in this setting. Direct numerical simulations
of time-dependent high-$Ra$ porous medium convection \cite{hewitt2012ultimate,otero2004high}
are consistent with this scaling, indicating that buoyancy forces
\emph{can} produce flows realizing optimal heat transport insofar
as scaling is concerned. On the other hand, asymptotic and numerical
investigations indicate that the best possible transport by steady
flows satisfies $Nu\sim Ra^{0.6}$ \cite{wen2015structure}.

Second, we observe that the enstrophy-constrained optimal transport problem corresponds to
Rayleigh's original model of buoyancy-driven convection in a fluid
layer \cite{Rayleigh1916}. There, steady convection also appears
to be strongly sub-optimal with the highest computationally observed
scaling being $Nu\sim Ra^{0.31}$ \cite{sondak2015optimal,waleffe2015heat}.
To date, there are no turbulent high-$Ra$ direct numerical simulations
indicating heat transport scaling much higher than $Nu\sim Ra^{1/3}$.

We close our discussion of fluid dynamical implications by commenting
on the certain sub-optimality of heat transport in Rayleigh's original model.
Rayleigh imposed \prettyref{eq:adv-diff-eqn}-\prettyref{eq:div-free-1}
in two-dimensions with stress-free velocity boundary conditions and
the usual Dirichlet temperature ones. Although RBC in a fluid layer
must obey the bound $Nu\lesssim Ra^{1/2}$ in any dimension and for
any boundary conditions, the result of \cite{whitehead2011ultimate}
is that in two-dimensions and with stress-free boundaries
$Nu\lesssim Ra^{5/12}$. Nevertheless, by combining the relevant balance law $\left\langle |\nabla\mathbf{u}|^{2}\right\rangle =Ra(Nu-1)$ implied by the Navier-Stokes momentum equation \prettyref{eq:momentum-eqn} with the result of \prettyref{thm:mainbounds_w2w_ens} and the remark immediately thereafter, we conclude that optimal heat transport in the setting of Rayleigh's model must satisfy $Nu\sim Ra^{1/2}$ (up to logarithmic corrections). 
Our analysis is consistent with all the requirements of Rayleigh's model except for the Navier-Stokes momentum equation \prettyref{eq:momentum-eqn}.
Thus, buoyancy-driven convection in two-dimensions between stress-free
boundaries must yield strongly sub-optimal rates of heat transport
as compared with what happens if \prettyref{eq:momentum-eqn} is not
imposed. This underscores the importance of using the momentum equation \textemdash rather
than only a balance law it implies \textemdash for determining the
asymptotic heat transport of turbulent RBC.


\section{Optimal transport as energy-driven pattern formation \label{sec:energy-driven-pattern-formation}}

There is a second scientific context, other than the fluid dynamical one, in which the methods behind our analysis of wall-to-wall optimal heat transport have played a fundamental role. This is the subject of ``energy-driven pattern formation'' in mathematical materials science  \cite{kohn2007energy}. 

Perhaps the key methodological contribution of this paper is the reformulation
of the general steady wall-to-wall optimal transport problem
\begin{equation}
\sup_{\substack{\mathbf{u}(\mathbf{x})\\
||\mathbf{u}||=Pe\\
+b.c.
}
}\,Nu(\mathbf{u})\label{eq:steady_w2w_pblm}
\end{equation}
in its integral form
\begin{equation}
\inf_{\substack{\mathbf{u}(\mathbf{x}),\xi(\mathbf{x})\\
\fint_{\Omega}w\xi=1\\
+b.c.
}
}\,\fint_{\Omega}|\nabla\Delta^{-1}\text{div}(\mathbf{u}\xi)|^{2}+\epsilon||\mathbf{u}||^{2}\fint_{\Omega}|\nabla\xi|^{2}.\label{eq:integral_formulation}
\end{equation}
This change of viewpoint, accomplished in \prettyref{sec:OptlDesignProb_w2w},
hinges on the fact that the Nusselt number of a steady velocity field
$\mathbf{u}$ can be written as the maximal value of a certain non-local
functional in $\xi$. The resulting problem \prettyref{eq:integral_formulation}
is equivalent to the original one \prettyref{eq:steady_w2w_pblm},
and optimizers correspond. In the examples of energy- and enstrophy-constrained
optimal transport considered in \prettyref{sec:energy-constraineddesign}
and \prettyref{sec:enstrophy_constrained_design}, where $||\cdot||$
is the (volume-averaged) $L^{2}$- or $\dot{H}^{1}$-norm, the integral
formulation \prettyref{eq:integral_formulation} plays a key role
in the construction of divergence-free velocity fields that achieve
nearly optimal transport. As that analysis shows, the complexity of
the successful construction \textemdash whether it can be described
using few lengthscales or many \textemdash depends strongly on the
choice of norm. 

Besides its practical use for the estimation of optimal transport,
\prettyref{eq:integral_formulation} shares striking similarities
with other non-convex and singularly perturbed variational problems
from mathematical materials science. The study of patterns selected
by energy minimization principles in this field is known as energy-driven pattern
formation. It is important to note that the wall-to-wall optimal transport problem is variational by definition.
Thus, our observation is not that there exists some variational formulation
for it, but rather that the specific formulation \prettyref{eq:integral_formulation}
reminds of various model problems from energy-driven pattern formation.
From this point of view, it is no surprise that the (nearly) optimal
patterns constructed in this paper for wall-to-wall transport \textemdash convection
rolls and branching flows \textemdash bear similarities with other
well-appreciated patterns from materials science including domain
branching in micromagnetics \cite{choksi1998bounds,choksi1999domain}
and wrinkling cascades in thin elastic sheets \cite{belgacem2000rigorous,jin2001energy,ortiz1994morphology}.
What \prettyref{eq:integral_formulation} offers is a functional analytic
framework in which to make such connections precise. 

We discuss below two model problems from energy-driven pattern formation
and their connections to wall-to-wall optimal transport. We leave
their general scientific introduction to the references therein, focusing
instead on the salient features of their analysis. This discussion
provides an alternate viewpoint on the role of branching patterns
in the variational analysis of transport, which complements the older
purely fluid dynamical arguments of Busse \cite{busse1969howards}.
We hope these remarks prove useful to the reader interested in our
approach.

\subsection{Magnetic domain branching in a uniaxial ferromagnet}

Our first example comes from micromagnetics and concerns the patterns
formed by magnetic domains in a uniaxial ferromagnet. The energetic
description is as follows. We take as the magnet the domain $\Omega=(-L,L)_{x}\times[0,1]_{y,z}^{2}$
where $x$ is the the preferred direction of magnetization and $L$
is the magnet's (non-dimensionalized) length. On $\Omega$ we define
a magnetization vector field $\mathbf{m}(\mathbf{x})=m_{1}\hat{i}+m_{2}\hat{j}+m_{3}\hat{k}$
which is required to be of unit size $|\mathbf{m}|=1$, and is extended
by zero to the rest of space $\mathbb{R}^{3}\backslash\Omega$. The
micromagnetic energy that results is
\begin{equation}
\int_{\text{all space}}|\nabla\Delta^{-1}\text{div}\,\mathbf{m}|^{2}+\int_{\text{magnet}}Q(1-m_{1}^{2})+\epsilon|\nabla\mathbf{m}|\label{eq:micromag_energy}
\end{equation}
where the divergence is understood in the distributional sense. Strictly
speaking, this is a ``sharp interface'' model in which the total
variation norm
\[
\int_{\Omega}|\nabla\mathbf{m}|=\sum_{i=1}^{3}\int_{\Omega}|\nabla m_{i}|=\sum_{i=1}^{3}\sup_{\substack{\mathbf{v}\in C_{c}^{1}(\Omega;\mathbb{R}^{n})\\
||\mathbf{v}||_{L^{\infty}(\Omega)}\leq1
}
}\,\int_{\Omega}m_{i}\text{div}\,\mathbf{v}
\]
features instead of the $\dot{H}^{1}$-norm (for more on this reduction
see \cite{choksi1998bounds}). The first term appearing in \prettyref{eq:micromag_energy}
is called the magnetostatic energy; it accounts for the cost of the
magnetic field induced by $\mathbf{m}$ in the ambient space. The
second term is the anisotropy energy and it arises from an underlying
crystalline anisotropy which prefers $\mathbf{m}$ to be $\pm\hat{i}$.
The third term is the interfacial energy. It permits $\mathbf{m}$
to be discontinuous, but limits the total area of any interfaces across
which $\mathbf{m}$ jumps. The parameters $Q$ and $\epsilon$ set
the relative strengths of these effects. The magnetostatic and interfacial
energies have direct analogs in the wall-to-wall problem \prettyref{eq:integral_formulation};
the anisotropy term does not. Note that, due to the constraint $|\mathbf{m}|=1$,
this functional is non-convex.

There are various designs for $\mathbf{m}$ one can entertain in minimizing
\prettyref{eq:micromag_energy}. One is the so-called Kittel structure,
in which $\mathbf{m}$ is independent of $x$ and $\pm\hat{i}$-valued
throughout the magnet, alternating between these at some to be determined
lengthscale $l$ in the $yz$-plane. This design costs no anisotropic
energy and the optimal $l$ is selected by minimizing its magnetostatic
and interfacial costs. Another important design is the Landau-Lifshitz
structure, in which $\mathbf{m}$ is independent of $x$ and $\pm\hat{i}$-valued
except for in a thin boundary layer near $x=\pm L$. There, it is
taken to be perpendicular to $\hat{i}$ in such a way as to eliminate
the magnetostatic energy completely, thus coupling the thickness of
the boundary layer to the lengthscale $l$ of oscillations in the
bulk. This is a sharp-interface version of the convection roll design
described in \prettyref{sec:energy-constraineddesign}. Finally, there
is the Privorotski\u{\i} construction, which plays the role of the branching
flows from \prettyref{sec:enstrophy_constrained_design}. It too involves
a very large number of distinct lengthscales which interpolate between
a preferred lengthscale in the bulk $l_{\text{bulk}}$ and a significantly
smaller one at the boundary $l_{\text{bl}}$. We refer the reader for more
details to \cite{choksi1998bounds,choksi1999domain} including a description
of the relevant regimes.

What can be proved regarding this non-convex, non-local minimization
problem? Following the reference \cite{choksi1999domain} we assume
that $\mathbf{m}(x,y,z)$ is periodic in $(y,z)$ and identify $[0,1]_{y,z}^{2}$
with $\mathbb{T}_{y,z}^{2}$. Then there exist positive constants
$C$ and $C'$ such that the minimum micromagnetic energy satisfies
\[
CQ^{1/3}\epsilon^{2/3}L^{1/3}\leq\text{minimum micromagnetic energy}\leq C'Q^{1/3}\epsilon^{2/3}L^{1/3}
\]
for all sufficiently large $Q$ and sufficiently small $\epsilon/L$.
The proof of this result requires two kinds of arguments. The upper
bound comes from estimating the cost of an optimal Privorotskii construction
(the conditions on $Q$, $\epsilon$, and $L$ ensure that the result
is significantly less than those obtained by the Kittel and Landau-Lifshitz
structures). The lower bound asserts that the Privorotskii construction
cannot be beat as far as scaling is concerned. The original proof
of it can be found in \cite{choksi1999domain}, but we note the existence
of a second more recent proof in \cite{cinti2016interpolation} which
utilizes the end-point Gagliardo-Nirenberg interpolation inequality
\[
||f||_{L^{4/3}(\mathbb{T}^{2})}\lesssim||\nabla f||_{L^{1}(\mathbb{T}^{2})}^{1/2}||f||_{H^{-1}(\mathbb{T}^{2})}^{1/2}
\]
holding for all mean-zero and periodic functions $f$. 

\subsection{Blistering patterns in thin elastic sheets}

Our second example comes from elasticity theory. Consider a thin elastic
sheet of (non-dimensional) thickness $h$ which is strongly bonded
to the top of a large rubber block, except for on some known sub-domain
$\Omega\subset\mathbb{R}^{2}$. Applying biaxial compression to the
block causes the sheet to blister in the unbonded domain. The result
is a complex pattern of wrinkles and folds whose details can be modeled
through the minimization of a certain non-convex and singularly perturbed
variational problem. As in \cite{belgacem2000rigorous,jin2001energy},
we consider minimization of the internal elastic energy under clamped
boundary conditions. In the F\"oppl-von Karman model, the elastic
energy (per unit thickness) is given by
\begin{equation}
\int_{\substack{\text{blistered}\\
\text{region}
}
}|e(\mathbf{v})+\frac{1}{2}\nabla\phi\otimes\nabla\phi|^{2}+h^{2}|\nabla\nabla\phi|^{2}\label{eq:FvK}
\end{equation}
where the ``in-plane'' displacement parallel to the top of the block
is $\mathbf{v}(\mathbf{x})$ and the ``out-of-plane'' displacement
perpendicular to it is $\phi(\mathbf{x})$. Here, $e(\mathbf{v})$
denotes the symmetric part of the in-plane displacement gradient $\nabla\mathbf{v}$.
Taken together, the in- and out-of-plane displacements yield the map
$(\mathbf{x},0)\mapsto(\mathbf{x}+\mathbf{v}(\mathbf{x}),\phi(\mathbf{x}))$
which describes the deformation of the blister. At the edge of the
blister $\partial\Omega$ we impose the clamped boundary conditions
\[
\mathbf{v}|_{\partial\Omega}=-\lambda\mathbf{x},\ \phi|_{\partial\Omega}=0,\ \text{and}\ \partial_{\nu}\phi|_{\partial\Omega}=0.
\]
The parameter $\lambda$ is positive and sets the amount of overall
compressive strain. The first term in the energy is called the membrane
term. It prefers the in-plane strain $e(\mathbf{v})+\frac{1}{2}\nabla\phi\otimes\nabla\phi$
to vanish. The second one is called the bending term, and it prefers
the out-of-plane displacement to vary on longer lengthscales or not
at all. The relative strength of these effects is determined by the
parameter $h$, which is understood to be small.

There are significant parallels between the elastic energy functional
\prettyref{eq:FvK} and the integral formulation of wall-to-wall transport
\prettyref{eq:integral_formulation}. Of course, the bending term
from \prettyref{eq:FvK} and the higher order terms from \prettyref{eq:integral_formulation}
act to regularize designs. More interestingly, we observe a similarity
between the membrane term from \prettyref{eq:FvK} and the advection
term and net-flux constraint from \prettyref{eq:integral_formulation}.
Let us introduce a streamfunction $\psi$ for the divergence-free
velocity field $\mathbf{u}$ (we work with a two-dimensional fluid
layer now) and rewrite the advection term as
\begin{equation}
\fint_{\text{fluid layer}}|\nabla\Delta^{-1}\text{div}(\mathbf{u}\xi)|^{2}=\fint_{\text{fluid layer}}|\nabla\Delta^{-1}J(\psi,\xi)|^{2}\label{eq:-1scaling}
\end{equation}
where $J(\psi,\xi)=\nabla^{\perp}\psi\cdot\nabla\xi$. Recall also
that the net-flux constraint requires 
\[
\fint_{\text{fluid layer}}w\xi=1.
\]
As pointed out in \prettyref{sec:An-integral-formulation} \textemdash see
the discussion surrounding \prettyref{eq:div-free} \textemdash for
smooth enough designs $(\mathbf{u},\xi)$ the advection term cannot
vanish while the net-flux constraint and boundary conditions $w|_{\partial\Omega}=\xi|_{\partial\Omega}=0$
hold. As \prettyref{eq:integral_formulation} makes clear, wall-to-wall
optimal transport is precisely about balancing these competing effects.
Regarding elasticity, we ask: what does it take for the membrane term
to nearly vanish? This can be answered with the aid of the lower bound
\[
\fint_{\substack{\text{blistered}\\
\text{region}
}
}|e(\mathbf{v})+\frac{1}{2}\nabla\phi\otimes\nabla\phi|^{2}\gtrsim\left|\fint_{\substack{\text{blistered}\\
\text{region}
}
}\frac{1}{2}\nabla\phi\otimes\nabla\phi-\lambda\text{Id}_{2\times2}\right|^{2} +\fint_{\substack{\text{blistered}\\
\text{region}
}
}|\nabla\nabla(\Delta\Delta)^{-1}\det\nabla\nabla\phi|^{2},\label{eq:-2scaling}
\]
which is sharp for certain domains and boundary conditions.\footnote{Equality can be achieved if a suitable Airy stress function solving
$\Delta\Delta\Psi=\det\nabla\nabla\phi$ with appropriate boundary
conditions exists.} For the in-plane strain to nearly vanish, the bulk average of $\frac{1}{2}\nabla\phi\otimes\nabla\phi$
must be nearly constant and equal to a known multiple of the identity.
At the same time, $\phi$ must nearly satisfy the degenerate Monge-Amp\`ere
equation $\det\nabla\nabla\phi=0$. It follows from the results of
\cite{pakzad2004sobolev} that these are incompatible constraints,
i.e., the membrane term cannot vanish while the bending term remains
finite. The situation is remarkably similar to that of wall-to-wall
optimal transport. 

The scaling law of the minimum energy for blistering is known. As
proved in \cite{jin2001energy,belgacem2000rigorous}, there exist
constants $C$ and $C'$ depending only on $\Omega$ so that 
\begin{equation}
Ch\leq\text{minimum elastic energy}\leq C'h\label{eq:min-energy-est_FvK}
\end{equation}
for small enough $h$. The upper bound comes from a branching construction
involving finer and finer oscillations in $\nabla\phi$ at a certain
lengthscale depending on the distance from the blister edge $\partial\Omega$.
As opposed to the corresponding result for the wall-to-wall problem,
there is no logarithmic correction to scaling in \prettyref{eq:min-energy-est_FvK}.
This can be explained with the help of \prettyref{eq:-1scaling} and
\prettyref{eq:-2scaling}: whereas the advection term has a $-1$
scaling in its quadratic nonlinearity $J(\psi,\xi)$, the membrane
term has a $-2$ scaling in $\det\nabla\nabla\phi$ and therefore
permits much stronger oscillations. As a result, branching can be
more easily accommodated in blistering than in optimal transport.
The lower bound from \prettyref{eq:min-energy-est_FvK} asserts that
branching indeed achieves the minimum energy up to a prefactor depending
only on the domain. Its proof reminds of the proof of Howard's lower
bound given after \prettyref{eq:Howard_LB}. For details we refer
the reader to \cite{jin2001energy} for the case where $\Omega$ is
a square with periodic boundary conditions at opposite sides, and
to \cite{belgacem2000rigorous} for the more general case of an arbitrary
domain $\Omega$ with suitably smooth boundary.

\bibliographystyle{plain}
\bibliography{wall2wallrefs}

\end{document}